\documentclass[preprint,11pt]{imsart}
\pdfoutput=1
\PassOptionsToPackage{numbers}{natbib}

\RequirePackage[OT1]{fontenc}
\RequirePackage{amsthm,amsmath}
\RequirePackage[numbers]{natbib}
\RequirePackage[colorlinks=true, citecolor=blue, filecolor=black,  urlcolor=blue]{hyperref}
\usepackage{hyperref}

\usepackage[utf8]{inputenc} %
\usepackage[T1]{fontenc}    %
\usepackage{url}            %
\usepackage{booktabs}       %
\usepackage{amsfonts}       %
\usepackage{amssymb}
\usepackage{nicefrac}       %
\usepackage{microtype}      %
\usepackage{graphicx}
\usepackage{epstopdf}
\usepackage{algorithm, algorithmic}
\usepackage{bm}
\usepackage{bbm}
\usepackage{xspace}
\usepackage{tikz}
\usepackage{standalone}
\usepackage{mathtools}
\usepackage{subcaption}
\usepackage[textsize=tiny,textwidth=0.8in]{todonotes}
\usepackage[normalem]{ulem}
\usepackage{amsopn}
\usepackage{cleveref}
\usepackage{listings}
\usepackage{xcolor}

\usepackage[centering,letterpaper,margin=1in]{geometry}
\ifpdf
  \DeclareGraphicsExtensions{.eps,.pdf,.png,.jpg}
\else
  \DeclareGraphicsExtensions{.eps}
\fi

\usepackage{enumitem}
\setlist[enumerate]{leftmargin=.5in}
\setlist[itemize]{leftmargin=.5in}

\newtheorem{theorem}{Theorem}[section]

\newtheorem{definition}[theorem]{Definition}
\newtheorem{dataassumption}[theorem]{Data Assumption}
\newtheorem{definitiono}[theorem]{Optimization Problem}
\newtheorem{lemma}[theorem]{Lemma}

\newtheorem{example}[theorem]{Example} %
\newtheorem{remark}[theorem]{Remark} %
\numberwithin{equation}{section}

\DeclareRobustCommand{\textvtt}[1]{%
  \begingroup
  \ttfamily
  \hyphenchar\font=`\- %
  \setlength{\spaceskip}{0.8em plus 0.4em minus 0.2em}%
  \setlength{\xspaceskip}{1em plus 0.4em minus 0.2em}%
  #1%
  \endgroup
}

\makeatletter
\DeclareRobustCommand\vttfamily{%
  \not@math@alphabet\vttfamily\relax
  \fontfamily{cmvtt} %
  \selectfont
}
\DeclareTextFontCommand{\textvtt}{\vttfamily}
\makeatother
\setlist[itemize]{align=parleft,left=0pt..1em}

\definecolor{darkcolor}{RGB}{127,0,85}
\colorlet{numb}{magenta!60!black}
\lstdefinelanguage{json}{
    basicstyle=\fontsize{8.6}{11}\vttfamily,
    commentstyle=\color{black},
    stringstyle=\color{darkcolor},
    showstringspaces=false,
    breaklines=true,
    breakatwhitespace=true,
    frame=lines,
    string=[s]{"}{"},
    comment=[l]{:\ "},
    morecomment=[l]{:"},
    literate=
        *{0}{{{\color{numb}0}}}{1}
         {1}{{{\color{numb}1}}}{1}
         {2}{{{\color{numb}2}}}{1}
         {3}{{{\color{numb}3}}}{1}
         {4}{{{\color{numb}4}}}{1}
         {5}{{{\color{numb}5}}}{1}
         {6}{{{\color{numb}6}}}{1}
         {7}{{{\color{numb}7}}}{1}
         {8}{{{\color{numb}8}}}{1}
         {9}{{{\color{numb}9}}}{1}
         {é}{{\'e}}{1}
         {è}{{\`e}}{1}
         {ê}{{\^e}}{1}
         {à}{{\`a}}{1}
}

\newcommand{\rev}[1]{\textcolor{black}{#1}}

\newcommand{\R}{\mathbb{R}}
\newcommand{\de}{{d_{\textrm{e}}}}
\newcommand{\pa}{{p_{\textrm{a}}}}
\newcommand{\pe}{{p_{\textrm{e}}}}
\newcommand{\Loss}{\textsf{L}}
\newcommand{\J}{\textsf{J}}
\newcommand{\dv}{d_{\textrm{v}}}

\newcommand{\T}{\mathsf{T}}
\newcommand{\A}{\mathfrak{A}}
\newcommand{\dkl}{\mathsf{D}_{\mathsf{KL}}}
\newcommand{\dvg}{\mathsf{D}}

\newcommand{\Asf}{\mathcal{A}}
\newcommand{\Ac}{\mathsf{A}}
\newcommand{\ATT}{\mathsf{ATT}}
\newcommand{\SSM}{\mathsf{SSM}}
\newcommand{\thetas}{\theta^{\star}}

\newcommand{\mf}{\mathsf{f}}
\newcommand{\mv}{\mathsf{v}}
\newcommand{\mg}{\mathsf{g}}
\newcommand{\varthetaa}{\vartheta}
\newcommand{\varthetae}{\varphi}

\newcommand{\fp}{f^{\mathsf{prompt}}}
\newcommand{\ellp}{\ell_{\mathsf{prompt}}}
\newcommand{\FLp}{\F^{L,\mathsf{prompt}}}
\newcommand{\Bl}{\mathsf{B}}
\newcommand{\Em}{\mathsf{E}}
\newcommand{\rr}{r}
\newcommand{\re}{\mathfrak{r}}
\newcommand{\ii}{\mathsf{i}}
\newcommand{\io}{\ii_{\mathsf{original}}}
\newcommand{\ic}{\ii_{\mathsf{convolved}}}
\newcommand{\is}{\ii_{\mathsf{sequence}}}
\newcommand{\Ls}{L_{\mathsf{stop}}}

\newcommand{\D}{\mathsf{D}}
\newcommand{\Se}{\mathsf{S}}
\newcommand{\F}{\mathsf{F}}
\newcommand{\G}{\mathsf{G}}
\newcommand{\E}{\mathsf{E}}
\newcommand{\He}{\mathsf{H}}
\newcommand{\Voc}{\mathsf{V}}
\newcommand{\pl}{\mathsf{p}_{\ell}}
\newcommand{\pem}{\mathsf{p}_{m}}
\newcommand{\qem}{\mathsf{q}_{m}}
\newcommand{\ql}{\mathsf{q}_{\ell}}
\newcommand{\pip}{\pi^{\mathsf{prompt}}}
\newcommand{\pil}{\pi_{\ell}}
\newcommand{\pim}{\pi_m}

\newcommand{\pioL}{\pi_{1:L}}
\newcommand{\pol}{\mathsf{p}_{1:\ell}}
\newcommand{\poL}{\mathsf{p}_{1:L}}
\newcommand{\polm}{\mathsf{p}_{1:\ell-1}}

\newcommand{\qol}{\mathsf{q}_{1:\ell}}
\newcommand{\prob}{\mathcal{P}}
\newcommand{\cL}{\mathcal{L}}
\newcommand{\fol}{f_{1:\ell}}
\newcommand{\foL}{f_{1:L}}

\newcommand{\NN}{\mathfrak{NN}}
\newcommand{\NO}{\mathfrak{N}}

\newcommand{\NNF}{\mathsf{NN}}
\newcommand{\NOF}{\mathsf{N}}

\newcommand{\fil}{f_{\ell}}

\newcommand{\folm}{f_{1:\ell-1}}
\newcommand{\fomm}{f_{1:m-1}}
\newcommand{\im}{\mathsf{im}}
\newcommand{\fdata}{\mathsf{f}}

\DeclareMathOperator*{\argmax}{arg\,max}
\DeclareMathOperator*{\argmin}{arg\,min}

\begin{document}

\begin{frontmatter}

\title{Large Language Models:\\ A Mathematical Formulation}

\runtitle{Large Language Models}

\begin{aug}
\author[a1,a4]{\fnms{Ricardo} \snm{Baptista}%
\ead[label=e1]{r.baptista@utoronto.ca}},
\author[a2,a3]{\fnms{Andrew} \snm{Stuart}%
\ead[label=e2]{andrxstu@amazon.com}},
\and
\author[a2]{\fnms{Son} \snm{Tran}%
\ead[label=e3]{sontran@amazon.com}}
\runauthor{Baptista, Stuart, Tran}
\address[a1]{Statistical Sciences, University of Toronto, Toronto M5G 1X6, Canada\\}
\address[a4]{Vector Institute, Toronto, M5G 0C6, Canada\\}
\address[a2]{Amazon Search, Palo Alto, CA 94301, USA\\}
\address[a3]{Computing and Mathematical Sciences, California Institute of Technology, Pasadena CA 91125, USA\\
\vspace{0.1in}
\printead*{e1}, \printead*{e2}, \printead*{e3}}
\end{aug}

\begin{abstract}
Large language models (LLMs) process and predict sequences containing text to answer questions, and address tasks including document summarization, providing recommendations, writing software and solving quantitative problems. We provide a mathematical framework for LLMs by describing the encoding of text sequences into sequences of tokens, defining the architecture for next-token prediction models, explaining how these models are learned from data, and demonstrating how they are deployed to address a variety of tasks. The mathematical sophistication
required to understand this material is not high, and 
relies on straightforward
ideas from information theory, probability and optimization. Nonetheless, the
combination of ideas resting on these different components from the
mathematical sciences yields a complex algorithmic structure; and
this algorithmic structure has demonstrated remarkable empirical successes. 
The mathematical framework established here
provides a platform from which it is possible to formulate and address 
questions concerning the accuracy, efficiency and robustness of the algorithms 
that constitute LLMs. The framework also suggests directions for development 
of modified and new methodologies.
\end{abstract}

\begin{keyword}
\kwd{Language Models}
\kwd{Tokenization}
\kwd{Transformer Architecture}
\kwd{Self-attention}
\kwd{Maps on Measures}
\kwd{Probabilistic Prediction}
\end{keyword}

\begin{keyword}[class=MSC]
\kwd{65C20} %
\kwd{65K10} %
\kwd{68T01} %
\end{keyword}

\end{frontmatter}

\section{Introduction} \label{sec:introduction}

\subsection{Goal and Organization of Paper} \label{ssec:GOP}

Large language models (LLMs) have, in only a few years, revolutionized the field of artificial intelligence (AI). Their impact is being
felt across society: they can help with tasks as diverse as the writing of computer code and the planning of a vacation itinerary; they
can hold conversations; they can process medical information and ask follow-up questions to gather more information and provide potential diagnoses; they %
can retrieve and summarize information from different sources including legal documents and scientific papers;  %
and they can interact with other sources of information such as images, audio and video. Their
potential to permanently change our way of life is clear. 

Yet the methodology, however impressive, is prone to errors, is not equipped with any rigorous logical or inferential constraints, comes with limited guarantees and consumes enormous resources, both to create and to deploy. This situation presents, then, as an opportunity for mathematicians to help to address these shortcomings by developing, and then studying, utilizing and evolving, a fundamental picture of how LLMs work. It is, however, not easy to find a 
complete presentation of how LLMs work, aimed at a mathematically inclined 
audience. This is 
because published work in the AI community relies primarily on
communication through a combination of high-level visual sketches of AI algorithms, made precise through supporting code
which is publicly available through, for example, Github. This admirable approach 
\rev{to open-source the software that underpins published}
results has been central to the success of AI \cite{donoho2024data}; it has also revolutionized other fields, such as computational science and engineering, which now strive to meet the same standards of reproducibility as the AI community. However, the reliance on published code to precisely define the methodology underlying LLMs makes it difficult to formulate and address conceptual questions whose study might help to address the aforementioned shortcomings of LLMs. The goal of this paper is to present a precise mathematical formulation of LLMs; in doing so we aim to make the subject of LLMs accessible to mathematicians, and to provide a framework for subsequent computational and theoretical studies of LLMs by mathematicians.

The objects that are manipulated in LLMs are sequences; Section \ref{sec:seq} sets up the notation that we employ for sequences, and describes different representations of language, as sequences of \emph{tokens} and sequences in Euclidean space, that are relevant to the formulation of LLMs. In Section \ref{sec:ntp} we describe the core methodology by which the \rev{majority of currently deployed LLMs work:} \emph{autoregressive factorization} of the probabilistic model for sequences, and \emph{next-token prediction}; this includes
a high-level description of the parameterization of models used for next-token prediction. Section~\ref{sec:train} then describes
the probabilistic objective function used in the training of these models, and the use of large corpora of data taken from the internet to define
this objective. In Section~\ref{sec:arch} we detail the %
parameterization of models for next-token prediction, introducing the concepts of \emph{positional encoding}, the \emph{attention mechanism} and the \emph{set transformer}; we also discuss the computational complexity of these
parameterizations, and describe modifications, such as using 
\emph{state-space models} in place of the attention mechanism, which have proven efficient in certain specific scenarios. Section~\ref{sec:seqgen} discusses different ways in which the 
next-token prediction model is used for \emph{sequence generation}. In Section \ref{sec:applications} we study various applications of these LLMs based on autoregressive factorization. Section~\ref{sec:evaluation} presents a framework to evaluate LLM's sequence generation capabilities on various tasks. In Section~\ref{sec:mask} we describe an alternative emerging approach to autoregressive  LLMs: generalization of diffusion-based generative modeling to discrete sequence space. Section~\ref{sec:SAC} contains our concluding summary and discussion of challenges.

The Appendices contain mathematical details that
are of interest, but which would break the flow if contained in the
main body of the paper.
Section~\ref{sec:deriv} provides details of derivatives of the model, with respect to parameters to be learned; these derivatives are required to employ gradient descent-based methods in the training of LLMs.
Section~\ref{sec:arch_probabilistic} provides a probabilistic perspective on 
the attention architecture; this perspective is of value when considering
the use of the attention mechanism beyond the confines of LLMs.
In Section~\ref{sec:ssmcts} we describe a continuous time perspective
on state-space models. This perspective underpins the original
development of the methodology as a replacement for the attention mechanism;
it is also useful when considering applications beyond the confines of LLMs. In Section~\ref{app:discrete_reversibility} we present a proof on the reversibility of discrete diffusions; this underlies the sequence generation methodology
described in Section~\ref{sec:mask}. Section~\ref{sec:LLMresponses} contains additional details and responses from LLMs for the applications presented in Section~\ref{sec:applications}.

\rev{We are describing a fast-moving field. In doing so we have made certain choices
in order to deliver a crisp mathematical presentation. In particular we focus primarily on autoregressive models and next token prediction; and, within that class of models, we have
focused on attention-based parameterizations of next-token prediction, and hence
transformers. This pair of choices leads to description of the LLMs that dominate the landscape today, and have done so for the last half-decade in which LLMs have
flourished. But it should be understood by the reader that we are not making any claims for the supremacy of autoregressive, attention-based LLMs. To the contrary we 
expect significant evolution over the coming decade and more. For this reason we
have included, in Section \ref{sec:arch}, discussion of emerging competitors to
the attention-based parameterization of next token prediction; and in Section
\ref{sec:mask} we have described diffusion-based language models which eschew autoregressive factorization, and work instead on entire sequences of text at the same time. We firmly believe that the mathematization of LLMs, and indeed
of many other algorithms arising in AI, will have significant value, and can play an
important role, for many years to come. Despite the specific choices we have made to align our presentation of LLMs with current practice, it is our belief that the approach we adopt here to formalizing AI algorithms will have lasting value beyond
the specifics of the current state-of-the-art in LLMs.}

\subsection{Literature Review} \label{ssec:literature}

Recent developments in LLMs are based on both theoretical foundations, and algorithmic innovations. It is argued persuasively in~\cite{donoho2024data}, that the impressive empirical success of LLMs is founded on their access to vast amounts of data, benchmarks agreed across the community and the sharing of computer code aimed at testing against these benchmarks. The resulting empirical studies have driven
innovation resulting in popular commercial models such as LLaMa~\cite{touvron2023llama}, GPT~\cite{achiam2023gpt}, Gemini~\cite{team2023gemini}, and Claude~\cite{claude4_2025}; we refer to a recent survey of this field in~\cite{zhao2023survey}. LLMs as a tool for mathematicians have been reviewed in~\cite{frieder2023large}.

\rev{A core ingredient of LLMs is the use of \emph{tokenization}. This} has two key components: the introduction of a larger vocabulary of \emph{tokens} in the service of 
shortening sequence length; and the identification of elements in this
vocabulary of tokens with points in Euclidean space. The first component
is typically achieved through \emph{byte-pair encoding}, which was introduced as a method of file compression in~\cite{gage1994new}, and appeared as an algorithm in the context of sequences in~\cite{ziv2003universal}.  The second component, identifying each token, which is an element of a large, but finite vocabulary, with a point in Euclidean space, \rev{is undertaken as part of the training of the LLM architecture~\cite{vaswani2017attention, devlin2019bert}.} Byte-pair encoding assigns a new vocabulary element to frequently occurring 
adjacent pairs of vocabulary elements, and then applies this idea recursively. The method is modified for language modeling in~\cite{sennrich2015neural} by adapting the algorithm to merge characters and produce a finite vocabulary of subword, and word, units, known as \emph{tokens}. Machine translation techniques and transformers process these tokens to model language. 
Extensions to continuous token spaces with an infinite vocabulary have also been proposed~\cite{zhang2025flexible}. Much of the text used in training LLMs is longer than the context window length used to take text as input. Decomposition of such larger text sequences, and merging them, so that the new units of text fit into the context window, can be done in multiple ways; optimizing for issues like memory allocation and information content is important in the decomposition process~\cite{ding2024fewer}. 

\rev{A second key ingredient is the autoregressive factorization of the learnable
family of models for sequence generation in token space~\cite{bengio2003neural, radford2018improving}. We will concentrate
on this factorization for much of the paper. However, as explained at the end of the previous subsection, it is important to appreciate that other competitive approaches
are starting to emerge.} In addition to sequential processing of sequences based on autoregressive decoding, other architectures have been proposed for text understanding and generation. These include BERT~\cite{devlin2019bert,wei2022chain}, which learns to unmask an arbitrary subset of elements of a sequence based on the surrounding context, which is not necessarily contiguous in the sequence. Discrete diffusion models are an alternative procedure to generate an entire block of text using a time-dependent denoising process rather than sequentially generating each subsequent element of a sequence. Discrete diffusion models for finite state-space sequence generation were presented in discrete time in~\cite{hoogeboom2021argmax, austin2021structured} and extended to a continuous-time setting in~\cite{campbell2022continuous}. Several choices for the noising process and objective functions have been proposed to learn the denoiser based on a discrete analog of score matching~\cite{lou2024discrete, shi2024simplified, sahoo2024simple} and variational inference~\cite{sun2024generative}. These methods currently achieve competitive performance to GPT-2 scale autoregressive models, but do not yet outperform more recent versions of GPT and similar commercial models. 

\rev{The final key ingredient underlying many currently deployed LLMs is the \emph{transformer} architecture, used within autoregressive factorizations.}
This method for processing sequential data was first
introduced in \cite{vaswani2017attention}. Transformers are studied from a mathematical perspective in~\cite{turner2023introduction}; and properties of
the transformer architecture are analyzed in detail in~\cite{geshkovski2025mathematical, chen2025quantitative, geshkovski2024measure, castin2025unified}. The transformer is based on the \emph{attention mechanism}, a mapping that takes sequences into sequences. When the parameters defining the mechanism are suitably tuned to the data, the mapping can be used to learn about correlations between  different elements in the input sequence. Attention is formulated mathematically in~\cite{calvello2024continuum} and analyzed in detail in~\cite{geshkovski2023emergence, geshkovski2024dynamic, karagodin2024clustering, calvello2024continuum}. In addition to their use in natural language processing, transformer architectures have been applied in other contexts including computer vision~\cite{dosovitskiy2020image}, time-series analysis and forecasting~\cite{ansari2024chronos}, nonlinear filtering~\cite{bach2025learning}, protein folding~\cite{jumper2021highly} and sequential decision making~\cite{chen2021decision}. LLMs have also been tuned to improve their reasoning abilities to solve mathematical problems in~\cite{shao2024deepseekmath}. We refer to~\cite{phuong2022formal} for the construction of many formal algorithms based on the transformer architecture. Several algorithmic variants on transformers have also been introduced to improve their efficiency for long sequences; these include approaches to reduce the quadratic complexity of standard self-attention~\cite{choromanski2020rethinking, dao2022flashattention, child2019generating, team2025kimi}. Other components of transformers, such as normalization for training stability---typically achieved using a process known as layer normalization~\cite{ba2016layer, zhang2019root}---have also been analyzed and explored through various methodological approaches~\cite{xiong2020layer, zhu2025transformers}. 
\rev{We also re-emphasize the point stated at the end of the previous subsection that, although our presentation is focused primarily on the widely-used, attention-based transformer architecture, new and potentially competitive architectures are
emerging. We highlight in particular the state-space model Mamba~\cite{gu2023mamba},
which reduces the quadratic (in sequence length) complexity of attention, and
demanding memory requirements, to linear scaling and more manageable memory requirements. Mamba is starting to have significant impact in some domains. Furthermore the RWKV approach in~\cite{peng2023rwkv} which, like Mamba, builds
on a recurrent structure to obtain linear complexity in sequence length and
reduce memory demands, is also showing promise. In approaches built on
recurrent structure there is a tension between computational/memory demands
and expressivity of the resulting architecture. Recent work attempts to balance these by using a hybrid combination of transformer and state-space approaches \cite{blakeman2025nemotron,lahoti2026mamba}.}

After training, the aim of LLMs is to generate text responses given starting prompts or questions, typically outside of the sequences seen during training. This process is performed using a deterministic or stochastic \emph{decoding} procedure, that sequentially generates the tokens, from first to last, conditioned on preceding tokens. This concept of building a model to predict the next most likely word given the context of a sequence of words first originated in work by Shannon~\cite{shannon1948mathematical}.
Several variants have been proposed to improve the correctness of the responses and avoid \emph{hallucination}, whereby the model produces factually incorrect or inconsistent content, despite being readable. To improve the coherence and variety of generated text, top-k sampling was introduced in~\cite{holtzmancurious}. In the context of LLMs, \emph{chain-of-thought reasoning} refers
to models which generate responses explicitly outlining the reasoning process behind them. Recently, eliciting chain-of-thought reasoning by adapting the prompt~\cite{wei2022chain} or changing the sampling strategy~\cite{wang2024chain, wangself} has been found to yield more factual responses in some settings.

\rev{Lastly we mention that the autoregressive} approach to text sequence generation has been extended to processing and generation of other data types, often referred to as multimodal LLMs. This data includes audio, images, and video, and can be continuous, semi-continuous or discrete. Common approaches~\cite{mizrahi20234m, wu2024next, zhan2024anygpt} embed all data types into the embedding space for a language model and use autoregressive decoding and prediction to process and generate text or data from other modalities.

\section{Sequences}
\label{sec:seq}

This section is foundational in relation to the question of how language is represented in, and manipulated by, algorithms rooted in
(flotating point approximation of) Euclidean space. We start in Subsection \ref{ssec:notseq}  by establishing  notation for finite sequences, over two \emph{vocabularies} (finite sets), denoted $\Voc_0$ and $\Voc$, and over the Euclidean space $\R^{\de}.$ The discrete set $\Voc_0$ defines the initial vocabulary used to describe inputs. The discrete set $\Voc$ defines a
second vocabulary, defined from $\Voc_0$ by a process known as byte-pair encoding, described in Subsection \ref{ssec:bpe}; this process
results in a set $\Voc$ satisfying $|\Voc| \gg |\Voc_0|.$
There is a bijection between sequences taking values in $\Voc_0$ and in $\Voc.$ With respect to the pairing implied by this bijection 
the sequence taking values in $\Voc$ will be much shorter than the sequence taking values in $\Voc_0,$ reflecting a natural trade-off between 
size of the alphabet and length of the sequence. In Subsection~\ref{ssec:mle} we describe how each element in $\Voc$ is identified with a point in
$\R^{\de}.$ This identification is a crucial step as it allows 
for \emph{interpolation} in the space of text; the interpolation is performed
in the Euclidean representation of the text sequence. 

\subsection{Notation for Sequences} \label{ssec:notseq} 

A central component in our framing of LLMs is the manipulation of sequences defined over the finite integer set $\Se^M = \{1,\dots,M\}$ for various $M.$ 
We will consider three types of sequences: the first where each element  takes value in finite discrete set $\Voc_0$, the second 
where each element  takes value in finite discrete set $\Voc$, and the third  where each element takes value $\R^{\de}$. We write 
\begin{equation*}
   \F_0^M := \{f: \Se^M \rightarrow \Voc_0\}, \quad \F^M = \{f: \Se^M \rightarrow \Voc\}, \quad \rev{\F_{e}^M = \{f: \Se^M \rightarrow \R^{d_{e}}\}}.
\end{equation*}
\noindent 
For $f \in \F^M$ we sometimes write $f_{1:M}$ to emphasize the domain of the sequence; and we write $f_m \in\Voc$ for the $m^{th}$ entry of
the sequence. Similar notation extends naturally to $\F^M_0$ and $\F^M_e.$ %
We adopt the convention that $f_{1:0}=\varnothing.$ A commonly occurring real-valued sequence will be an element of $\F^M_e,$ where $\R^{\de}$ is an \emph{embedding space.}\footnote{\rev{Note that later sections will consider other embedding dimensions with letter $e$ replaced by $l$
in $\R^{d_l}$
(when discussing images) and replaced by $c$ (when denoting channels) in $\R^{d_c}$, with analogous embedding spaces $\F^M_l$ and $\F^M_c$, respectively.}}
Finally we define $\Se^{M,M}:=\Se^M \times \Se^M.$

\subsection{Tokenization and Byte-Pair Encoding} \label{ssec:bpe}

Consider an initial vocabulary $\Voc_0$  used to define input text as an element of $\F_0^{M_0}.$ The goal of \emph{compression} is to reduce the length of an input sequence by identifying a new, much larger, vocabulary $\Voc$, containing the initial vocabulary, and representing the text as an element in $\F^M$ for some $M$, typically much smaller than $M_0.$ Compression can be achieved by creating a vocabulary $\Voc$ from $\Voc_0$ which is based on frequently-occurring strings of characters in $\Voc_0$. The vocabulary elements that result from this process are known as \emph{tokens}. Currently \emph{byte-pair encoding} \cite{gage1994new} is widely used for tokenization in LLMs~\cite{rajaraman2024toward}; we explain this approach to encoding by means of examples. 

\begin{example} \label{ex:21} Consider a simple example of compression, starting with initial alphabet $\Voc_0 = \{a,b,c\}$. Given the input sequence $f^{(0)} = \{abcababc\}$, the first iteration counts the occurrences of each pair of neighboring elements. The most frequently occurring pair is identified, in this case $ab,$ and then
labeled $d = ab;$ it is then added to the vocabulary and $\Voc_1 \coloneqq \Voc_0 \cup {d}$ is defined. In this new vocabulary, the input sequence is represented as $f^{(1)} = \{dcddc\}$. The idea of creating new vocabulary elements based on frequently occurring pairs is now recursed on. The second iteration identifies the element $dc$ to be the most frequently occurring pair, defines the new element $e = dc$ and adds it to the vocabulary, defining $\Voc_2 \coloneqq \Voc_1 \cup {e}$. The sequence is now represented as $f^{(2)} = \{ede\}$. This sequence cannot be compressed further using the notion of most frequently occurring neighbouring pairs. We thus define the final new vocabulary to be $\Voc := \Voc_2=\{a,b,c,d,e\}.$ In practice, this iterative  procedure can also be stopped early after a pre-defined number of levels of recursion; 
in this simple example the algorithm might be stopped after one level and then choose $\Voc := \Voc_1=\{a,b,c,d\}$, for instance, rather than after the two levels which lead to termination.
\end{example}

\begin{example} \label{ex:22}
Consider application of byte-pair encoding to an initial vocabulary $\Voc_0$ defined by the English language; this will contain the standard
lower case alphabet $\{a,b,\cdots, y,z\}$, its upper-case version, as well as elements representing punctuation, spaces, new paragraph, start of text, end of text and the like;  
it may also contain symbols from computer code
and include other alphabets, such as the Greek alphabet $\{\alpha, \beta, \cdots,  \omega\}$. Then $|\Voc_0| = \mathcal{O}(10^2)$.
Use of byte-pair encoding, based on studying frequencies in large text corpuses from the internet, leads to $|\Voc| = \mathcal{O}(10^4)$~\cite{radford2019language}. 
English language text of length several paragraphs can be represented as a sequence of length $M=\mathcal{O}(10^2)$ in the encoded vocabulary~\cite{cohere_tokens_2025}.
\end{example}

From now on we set $\Voc:=\{1, \cdots, |\Voc|\}$ noting that we may use this set of integer labels for the elements of $\Voc$ without
loss of  generality. Thus $\Voc \equiv \Se^{|\Voc|}.$

\begin{example}[Example \ref{ex:21} continued]
We have $\Voc=\{a,b,c,d,e\}.$ However without loss of generality we may relabel these elements as $\Voc=\{1,2,3,4,5\}.$
\end{example}

\begin{remark} \label{rem:gabriele}
When byte-pair encoding is used and recursed to significant depth.
most commonly occurring words end up being represented by a single token.
\end{remark}

\begin{remark} \label{rem:byte}
Whilst byte-pair encoding has proven to be successful empirically, it is conceivable that other encodings might be desirable in some
settings. In particular the trade-off between sequence length and vocabulary size may play out differently, in different tasks or on
different architectures, in terms of associated computational cost for training and evaluation.
\end{remark}

\subsection{Making Language Euclidean} \label{ssec:mle}
We now discuss how byte-pair encoded sequences in $\F^M$ can be represented as sequences 
in Euclidean space, that is as elements in $\F^M_e.$ This is done by assigning a unique element in $\R^{\de}$ to each point in $\Voc;$ and indeed this assignation is typically be learned as part of the training of the LLM from data. In this subsection the goal is to set-up a notation for this assignation process. The basic building block is the following definition of function $\phi: \Voc \to \R^{\de}$. Recall that $\Voc=\{1, \cdots, |\Voc|\}=\Se^{|\Voc|}.$

\begin{definition} \label{def:thetam}
Let $\varthetae \in \R^{\de \times |\Voc|}$ be a matrix, where $\varthetae =(\varthetae_1, \cdots, \varthetae_{|\Voc|});$ here $\varthetae_v \in \R^\de\,\, \forall\, v \in \Voc.$
Then, define $\phi: \Voc \to \R^{\de}$ by $\phi(v)=\varthetae_v\, \forall\, v \in \Voc.$
\end{definition}

Given $f \in \F^M$ we extend $\phi$ to act on sequences:

\begin{definition} \label{def:phis}
Define $\phi: \F^M \to \F^M_e$ by $(\phi \circ f)_m=\phi(f_m).$
\end{definition}

\begin{remark}
The construction of $\phi$ to act on sequences in $\F^M$ is parameterized by $\varthetae$. The parameterization is independent of sequence length $M.$ This is because $\phi$ is defined to act elementwise on the sequence. We will see later that the other learnable parameters in LLMs are also defined in a manner which is independent of sequence length. This is an important
feature of the parameterization of LLMs as it enables a common model to be
deployed to manipulate sequences of different lengths.
\end{remark}

\begin{remark} \label{rem:close}
It is natural to ask whether $\varthetae$ could be defined, or learned, independently of LLM training. In particular there is nothing in the current learning of the embedding which ensures that, for example, two words which are synonymous with one another are close when embedded in Euclidean space; indeed two words which are synonymous may even be comprised of different numbers of tokens. For example, if one such word is used infrequently and is not represented as a single token, whilst the other is frequently used and is represented as a single token, then the two words are not even embedded in the same Euclidean space. Addressing this issue might involve moving beyond byte-pair encoding; see Remark \ref{rem:byte}.
\end{remark}

\subsection{Encoding Other Data Modalities} \label{sec:encoding_other_modalities}

Here we discuss the representation of input modalities, other than text, 
as a sequence. This is the basis for \emph{multimodal LLMs} which
facilitate communication between different data modalities.
We start by focusing on images. 
Given a bounded domain $D \subset \mathbb{Z}^2$, an image is given by a mapping $f \colon D \rightarrow \R^{d_c}$ where $d_c$ is a channel dimension; this is taken to be $d_c = 1$ for grayscale images and $d_c = 3$ for RGB images. The 
domain $D$ can be flattened into a set with $M = |D|$ elements that is indexed by a single parameter so that the image $f$ can be represented in the space of sequences $\F_{c}^M.$
This is analogous to the space $\F_{e}^M$ for the Euclidean embedding of text sequences.

For text sequences it has been empirically observed
that it is advantageous to shorten the length of sequences at the expense of a larger vocabulary. A similar trade-off is present
for images, with channel width playing the role of vocabulary size.
The next example illustrates this.

\begin{example}
The CLIP algorithm~\cite{radford2021learning} is a contrastive learning approach~ \cite{baptista2025mathematical} used in the alignment of text and images. It employs a methodology to process images as sequences that trades sequence length with
channel width. To explain CLIP in a concrete context we assume that we are dealing with RGB images, and that $D=\Se^{K',K'}.$ 
Then the image can be viewed as $\io: \Se^{K',K'} \to \R^3.$  
By application of a linear convolution operator this is mapped to $\ic: \Se^{K,K} \to \R^{d_l}$. This
is then flattened to obtain $\is: \Se^{K^2} \to \R^{d_l}.$ If $d_l$ coincides with the text embedding dimension $\de$ then the resulting
sequence is indeed in $\F_e^M.$ In CLIP, the specific values used for the dimensions are $K'=224$, $K=7$ and $d_l=768$; furthermore $d_l$ is indeed chosen to coincide with the text embedding dimension $d_e$. Noting that $768 \gg 3$ 
and that $K^2 \ll (K')^2$ we see that, as with text, we have again shortened
the length of the sequence at the expense of increasing some measure of the
size of the space in which each element of the sequence lies. 
\end{example}

\begin{remark} \label{rem:atebit}
For 8-bit RGB images, the intensity of given colour at each pixel is often given by an integer between $0$ and $255$ rather than an arbitrary real number. Hence, images can also be represented as mappings with discrete outputs, i.e., $\io: \Se^{K',K'} \rightarrow \{0,\cdots,255\}^{3}$, which corresponds to a tensor of integers of size $K' \times K' \times 3$.
However, in practice these images are treated as having continuous outputs, which become the inputs to neural network-based vision encoders; see Section~\ref{ssec:mmode} for more details on the encoding process.
\end{remark}

Other frequently occurring modalities include audio and video. Audio, 
similarly to text, is best thought of as a sequential time-series. 
In many circumstances this sequences takes values in Euclidean space; but, 
as discussed in Remark \ref{rem:atebit} in the setting of RGB images, 
this information is often discretized and represented in a set of integers. 
Video is processed by treating the data as a sequence of images, also known as frames. Hence, for $D=\Se^{K',K'}$, a video can be described by a space-time mapping $f \colon [0,T] \times D \rightarrow \R^{\dv}$ for some final time $T$. In practice, the time-domain is discretized based on the frequency of image capture, known as the frame rate. For example, a video can be discretized into a finite number $D_T$ of frames; this results in a mapping from $D_T \times D$ to $\R^{\dv}$. To reduce the memory required to store a video and reduce redundant information across frames, it is often necessary to further sub-select frames, which are not necessarily equally spaced in time; see~\cite{hu2025m} for a recent approach to frame selection.
Finally we note that video and audio are often coupled, through a common 
time-indexing to create audio-visual data. Multimodal LLMs are being
developed to co-process text, image, audio, video and audio-visual data.

\section{Next-Token Prediction} \label{sec:ntp}

Next-token prediction is central to LLMs. For a sequence of length $m$, 
$f_{1:m} \in \rev{\F^m}$,  
next-token prediction works by assigning probabilities of 
token $f_m$ taking values at each element in $\Voc=\Se^{|\Voc|}$, conditional 
on the entire preceding sequence $f_{1:m-1} \in \rev{\F^{m-1}}$ of tokens.
Throughout this section we use $\prob(\cdot)$ to denote a probability space, either over $\Voc$ and denoted $\prob(\Voc)$, or over
sequences or arbitrary length \rev{$m$}, where each element takes values in $\Voc$, and denoted $\rev{\prob(\F^m)}$. 
The starting point of next-token prediction is a probability model for sequences $f_{1:\ell} \in \F^\ell$, for $\ell=1, \dots, L$,
where $L$ denotes a pre-specified maximum \emph{context length} $L$.\footnote{Sometimes also known as the \emph{context window}.} 

In Subsections~\ref{ssec:autof} we describe \emph{autoregessive} factorizations of probability distributions on sequences; this factorization builds in the
idea of next-token prediction into the factorization. Then in Subsection \ref{ssec:approxf} we describe the form of approximations we
use for this autoregressive factorization. 
Recall from Definition~\ref{def:phis} that application of $\phi$
to $f_{1:m-1} \in \F^{m-1}$ embeds the sequence $h_{1:m-1}$ in 
Euclidean space $\F^{m-1}_e.$ In order to take advantage of advances in neural networks and related architectures for classification defined for inputs in Euclidean space, we make use of approximation architectures that take
as input the sequence $h_{1:m-1}:=\phi \circ f_{1:m-1} \in \F^{m-1}_e.$ 

\subsection{Autoregressive Factorization Of Probability Distributions On Sequences} \label{ssec:autof}

\rev{The starting pointing for this subsection is the observation that
$$\poL(\foL)= \pl(f_L|f_{1:L-1})\pl(f_{1:L-1}).$$
Recursing on this identity leads to the heart of next-token prediction, namely
the \emph{autoregressive factorization} of $\poL \in \prob(\F^L)$ given by
\begin{equation}
    \label{eq:auto}
\poL(\foL)=\Pi_{\ell=1}^L \pl(f_\ell|f_{1:\ell-1}).
\end{equation}
We will also need the analogous autoregressive factorization of $\pol \in \prob(\F^\ell)$ for subsets of sequences of any length $\ell \in \Se^L:$
\begin{equation}
    \label{eq:auto2}
\pol(\fol)=\Pi_{m=1}^\ell \pem(f_m|f_{1:m-1}).
\end{equation}
This is the statement that the probability of a given sequence of tokens
of length $\ell$ may be expressed as the product of the probabilities of
the token in the $m^{th}$ position, given the sequence of tokens preceding it,
for $m=1, \dots, \ell.$
}

Here, for each input sequence $\fomm \in \F^{(m-1)}$,
$\mathsf{p}_m(\cdot|\fomm) \in \mathcal{P}(\Voc).$ Alternatively we may write $\mathsf{p}_m(\cdot|\cdot): \F^{(m-1)}_e \to \mathcal{P}(\Voc).$
We refer to $\mathsf{p}_m(\cdot|\fomm) \in \mathcal{P}(\Voc)$ as the \emph{target next-token probability} at position $m$ in the
text sequence, noting that it is a conditional probability of token at index $m$, given the sequence $\fomm$ of tokens seen up to $m-1.$

\begin{remark} \label{rem:31}
The discrete probability distribution on $\pol \in \prob(\F^{\ell})$, any $\ell=1,\dots, L$,  or on 
$\pl(f_\ell|f_{1:\ell-1}) \in \prob(\Voc)$, any $\ell=1,\dots, L$, should be thought of as being defined
empirically by the set of all text available on the Earth at this moment in time, either on the internet or in hard-copy documents. 
In practice an approximation is learned through accessing a (vast) subset of such data from the internet. More information on the training data is provided in Remarks~\ref{rem:training_data} and~\ref{rem:data_processing}.
\end{remark}

LLMs work by \emph{rolling out} sequences in $\Voc$ step-by-step, starting with a sequence of a given length, then increasing the sequence length by one in steps until it reaches length $L$. The main part of this process is to take 
a sequence of length $m-1$ and predict the probability, for each token in 
$\Voc$, of occupying the $m^{th}$ entry of the sequence. 
The probabilistic structure can be used to roll-out sequences in various different ways, which we describe in 
detail in Section \ref{sec:seqgen}.

\begin{example}[Example \ref{ex:22} continued] \label{ex:31}
All production LLMs have a maximum number of tokens that are processed at once. For instance, the popular LLaMA-2 $7$ billion model has a context window of $L = 4096$ tokens~\cite{touvron2023llama}\footnote{Recent LLaMA models with more parameters and specialized fine-tuning support up to $L = 128K$ tokens~\cite{grattafiori2024llama}}. This length corresponds, in a byte-pair encoded alphabet, to up to several pages of text. The CLIP model for alignment
of (for example) text and images uses $L=77$, corresponding, in a byte-pair encoded alphabet, to up to several paragraphs of text~\cite{radford2021learning}.
Linking to Example \ref{ex:22} demonstrates the trade-off being made here between vocabulary size and sequence length. Both have implications
for computational efficiency: the attention-based architectures, described 
in detail in Section \ref{sec:arch}, have cost which scales quadratically in
sequence length, to evaluate; on the other hand the size of vocabulary has impact on memory requirements and on the training problem
defined in Section~\ref{sec:train}. Understanding how to optimize this trade-off between sequence length and vocabulary size remains an open
question.
\end{example}

\subsection{Approximation Of Probability Distributions On Sequences} \label{ssec:approxf}

\rev{We seek a $\theta-$parameterized family $\pioL(\foL;\theta)$ from which to find an approximation of $\pol(\fol)$ given in \eqref{eq:auto}. 
Here $\theta \in \Theta \subseteq \R^p.$ Motivated by \eqref{eq:auto} 
it is natural to seek the approximation family in autoregressive form:}
\begin{equation}
    \label{eq:auto3}
\pioL(\foL;\theta)=\Pi_{\ell=1}^L \pil(f_\ell|f_{1:\ell-1};\theta_\ell).
\end{equation}
\rev{Then, if $\theta_\ell \in \Theta_\ell \subseteq  \R^{p_\ell}$ and if we set $\theta = (\theta_1,\dots,\theta_L)$, we have $p=\sum_{\ell=1}^L p_\ell$. 
We thus wish to approximate, for each $m$, the next-token conditional probability $\mathsf{p}_m(\cdot|f_{1:m-1})$ by choosing a model distribution $\pim(\cdot|f_{1:m-1};\theta_m) \in \mathcal{P}(\Voc)$.} 

\rev{Because $L$ may be of $\mathcal{O}(10^4)$ or more (see Example \ref{ex:22}), providing enough independent data to learn a parameter $\theta$ of the form
\eqref{eq:auto3} is challenging. Furthermore the methodology would require 
storing a large number of models and parameters. These complexities are typically addressed by \emph{using the same parameter $\theta$ to approximate every conditional probability.} This results in the model}
\begin{equation}
    \label{eq:auto3_shared}
\pioL(\foL;\theta)=\Pi_{\ell=1}^L \pil(f_\ell|f_{1:\ell-1};\theta).
\end{equation}
\rev{This simplification only requires learning a single parameter $\theta \in \Theta \subseteq \R^p$. However, we have created a new challenge: handling conditioning sequences of varying length. The \emph{transformer} architecture, which has proved successful
at addressing this challenge, is presented in detail in Section~\ref{sec:arch}.} 

\rev{We now split parameter $\theta$ into two components with different roles.
We set $\theta=(\varthetaa,\varthetae)$. The component parameter $\varthetae$ is associated with embedding the sequence $f_{1:m-1}$ in a space of sequences taking values in Euclidean space, of size proportional to $m$, rather than the discrete token space $\Voc$. Component parameter $\varthetaa$ is associated with learning how to interpolate in the resulting Euclidean spaces. We now get into the
details of why, and how, this is done.}

The parameterization of the conditional probability
$\pim(\cdot|f_{1:m-1};\theta)$ by sequence $f_{1:m-1} \in \F^{(m-1)}$ makes learning from data difficult, because $\Voc$, and hence $\F^{(m-1)}$, is a discrete space with no notion of continuity. Thus we seek to embed $\Voc$ in Euclidean space $\R^{\de}$ for some embedding dimension $\de.$ To do this, we use the \rev{map $\phi: \Voc \to \R^{\de}$ from Definition~\ref{def:thetam};  we recall that $\phi$ is defined by matrix $\varthetae \in \R^{\de \times |\Voc|} \cong \R^{\pe}.$ We also recall the extension of $\phi$ to act on $\Voc-$valued sequences, defined component-wise
along the sequence, from Definition~\ref{def:phis}. Next, we introduce interpolation parameter $\varthetaa \in \R^{\pa}$
which will enable us to perform interpolation with respect to the conditioning
sequences.} Then $\theta \in \R^p$, where $p=\pa+\pe$ and $\pe=\de|\Voc|$.  
We seek to parameterize $\pim$ in the form
\begin{equation} 
\label{eq:qarc}
\pim(f_m|f_{1:m-1};\theta):=\qem\bigl(f_m|\phi \circ f_{1:m-1};\varthetaa\bigr).
\end{equation}
\rev{The conditional probability $\qem$ now takes Euclidean-valued sequences as input,
because of the mapping $\phi$, which is represented by matrix $\varphi.$ The
conditional probability is
parameterized by $\varthetaa$ to enable learning of the dependence on
Euclidean-valued sequences. For given} $\varthetaa$,  $\qem(\cdot\,|\cdot\,; \varthetaa): \F^{(m-1)}_e \to \mathcal{P}(\Voc).$
For each $m$, learning $\pim$ in this form reduces to learning a map from $\F^{(m-1)}_e$, sequences in $\R^{\de}$ of length $(m-1)$, into the probability simplex $\mathcal{P}(\Voc).$ 

\begin{example} \label{ex:32}
\rev{Learning a map from $\R^d$ to a probability simplex on a finite set may be
seen as underpinning the central machine learning task of \emph{classification}.
This probabilistic approach to classification was first introduced in~\cite{bridle1990probabilistic}. Given an input in $\R^d$, the objective is to learn a function that outputs a probability distribution over a finite set of classes, assigning higher probability to the more likely class. Such an approach is central
to the MNIST digit labeling problem addressed in the seminal paper~\cite{lecun2002gradient}.} 
In that context $d=28^2$ and the cardinality of the set underlying the probability simplex is $10.$
\end{example}

In the context of language models we are learning ``classifiers'' from Euclidean spaces of dimension $(m-1)\de$, for $m=1, \cdots, L$
into a set of cardinality $|\Voc|$. \rev{Despite being a familiar problem, analogous to classification, it is rendered complex here for two reasons: (i) the choice of parameterization couples together the $\qem$ for all $m=1, \cdots, L$ by a model using the \emph{same} parameter, making it is necessary to consider multiple 
sequences of all lengths $m=1, \cdots, L$ simultaneously during parameter learning; in the next section we discuss in detail how this may be addressed; (ii) the number of labels $|\Voc|$ is often large; for example, in the case of byte-pair encoding of internet corpora, we recall from Example \ref{ex:22} that $|\Voc|=\mathcal{O}(10^4).$}

 \begin{remark} \label{rem:concept}
 \rev{As well as the computational advantages of learning a single parameter $\theta$, independent of conditioning sequence length, rather than a collection $\{\theta_m\}_{m=1}^L$ as in \eqref{eq:auto}, there is a conceptual reason for doing so: similar ideas or statements may be expressed in sentences of different length; enabling interpolation across sequences of different length is thus clearly motivated.}
 \end{remark}

\section{Training Of The LLM} \label{sec:train}

\rev{In this section we present the learning problem for choosing the optimal choice of parameter $\theta = (\phi, \vartheta) \in \Theta \subseteq \R^p$, which defines the 
approximate next-token probability model for input sequences of arbitrary length, as described in Subsection~\ref{ssec:approxf}. We denote the optimal choice of $\theta$ by $\theta^\star.$}  \rev{The learning of the model is undertaken on the basis of the following data assumption. The data consists of sequences of length $L$, which we denote as $\mathsf{f}_{1:L}$ to distinguish them from sequences $f_{1:L}$ that are generic inputs to, or samples generated from, the model.}

\begin{dataassumption} \label{das:sequences} We collect samples of i.i.d.\thinspace sequences $\{\fdata_{1:L}^{(n)}\}_{n=1}^N \sim \poL$.
\end{dataassumption}

The objective that is minimized to determine $\theta$ is formulated as 
minimizing a divergence between the data distribution, written in
the form \eqref{eq:auto}, and the proposed $\theta-$dependent 
model distribution, determined by \eqref{eq:auto3_shared}, \eqref{eq:qarc}. We introduce the population-level objective
function and learning problem in Subsection~\ref{ssec:poploss}; and then an empirical objective function, which is implemented in
practice, in Subsection~\ref{ssec:emploss}. 
Using the autoregressive structures \eqref{eq:auto} and \eqref{eq:auto3_shared} we derive a decompositions of the original objective. The original objective is defined over the joint
distribution defined by the entire sequence of length $L$; this is decomposed into a sum of coupled objectives each of which matches the conditional
distribution on the next-token, given the preceding sequence, for all sequences
of lengths up to $L-1.$ This decomposition reveals that the
proposed objective strives to ensure that the model is accurate when conditioning on sequences of arbitrary length.

Before defining population and empirical optimization problems, we recall the Kullback-Leibler (KL) divergence between probability distributions $\mathsf{p}, \mathsf{p}'$ defined over the same space and with common support:
\begin{equation} \label{eq:KLdivergence}
\dkl(\mathsf{p}||\mathsf{p}') = \mathbb{E}_{f \sim \mathsf{p}} \left[\log\frac{\mathsf{p}(f)}{\mathsf{p}'(f)}\right].
\end{equation}
Use of this divergence as the basis for definition of objective functions
leads to the decompositional structure alluded to
in the previous paragraph. In the following sections we demonstrate this fact, 
and highlight a practical optimization problem for $\theta$ that may be implemented as a result. Before this, we make two remarks about the data employed in training. \rev{Thus we define the general optimization problem, in population form in
Subsection~\ref{ssec:poploss} and in empirical form in Subsection~\ref{ssec:emploss}.
We then describe a procedure to fine-tune models to task-specific data in Subsection~\ref{ssec:emplossFT}; and a procedure to increase the model's alignment to human preferences using reinforcement learning in Subsection~\ref{subsec:RL}. We will use $N_{\mathsf{fine}}$ and $N_{\mathsf{RL}}$ to denote the number of data samples available for fine-tuning or reinforcement learning, distinct from $N$, the number
of samples used in the general optimization problem based on Data Assumption~\ref{das:sequences}. We will conclude this section with some details on performing optimization in practice in Subsection~\ref{sec:minimizing_empirical}.}

\begin{remark}[Remark \ref{rem:31} continued] \label{rem:training_data}
We have defined probabilities $\poL$ with respect to all text available on the Earth. But in practice an approximation is learned through accessing a (vast) subset of such data from the internet. So, although $N$ is large, it is not the case that all data defining $\poL$ is used. Many
proprietary models do not report details of training data used, but some do. For example, the LLaMa-3 models were trained on a corpus of about $15$ trillion multilingual tokens~\cite{grattafiori2024llama}, increasing from the $2$ trillion tokens used to train LLaMA-2 models~\cite{touvron2023llama}. The data used arises from several sources. For example, early GPT-3 models used a combination of data sources such as Common Crawl, books and the English Wikipedia pages~\cite{brown2020language}. With the increasing number of tokens used, and with current
understanding about how much data is required, it is estimated that model training requirements will require the totality of all available human-generated text data on the web, which contains a median of $3100$ trillion tokens, between 2026 and 2032~\cite{villalobos2022will}. To address potential data scarcity,
we note that more data is available in the form of images and videos.
Furthermore recent models have also started to incorporate synthetic data for training~\cite{abdin2024phi}; the effect of synthetic data on model 
collapse is studied in~\cite{gerstgrasser2024model}.
\end{remark}

\begin{remark}[Data Processing] \label{rem:data_processing} The training data employed for several commercial models are determined by curating and filtering web data. This process includes removing harmful material, copyrighted content, duplicates and outliers. Furthermore,
training algorithms are often run several times, with different balances of the constituent types of data (text, code, mathematical calculations and so forth) in order to improve the model's performance on specific benchmarks. This can be helpful for tasks  such as mathematical reasoning~\cite{brown2020language}. The processing increases the quality and diversity of samples, thereby reducing correlations between sequences. However, the processing results in shifts to the data distribution which are a function of the performance on the entire dataset. In practice, this violates having i.i.d. samples in Data Assumption~\ref{das:sequences}.
\end{remark}

\subsection{Population-Level Objective} \label{ssec:poploss}
The learning problem for the joint model over sequences of length $L$
is now presented. Recall the true data distribution $\poL$, from 
\eqref{eq:auto}, our approximate distribution $\pi_{1:L}$, from 
\eqref{eq:auto3}, and the KL divergence \eqref{eq:KLdivergence}.
\rev{In particular, we note that in this subsection we work in the general
form of approximate distribution given by \eqref{eq:auto3}, specifying
to the specific case given by \eqref{eq:auto3_shared} in the following subsection.}
Using these constituents we define the following optimization problem:
\begin{definitiono} \label{op:KLjoint}
\begin{align*}
&\J(\theta) \coloneqq \dkl\bigl( \poL||\pi_{1:L}(\cdot\,;\theta)\bigr),\\
&\theta^\star \in \argmin_\theta \; \J(\theta).
\end{align*}
\end{definitiono}
\noindent \rev{We also write $\thetas = (\thetas_1,\dots,\thetas_L).$
The optimization} problem assumes access to the data distribution $\poL$, leading to
the population-level objective. 
The optimization problem is well-defined if $\poL$ has density with respect to
$\pi_{1:L}$, with both viewed as measures supported on $\F^L.$
Solving this optimization  problem is thus targeted at
\rev{choosing the parameter $\theta$ so} that the approximate next-token prediction model \eqref{eq:auto3} gives rise to sequences with similar distribution to those seen in the data distribution \eqref{eq:auto}. The choice of KL divergence has two benefits: (i) as we show in this subsection, it naturally exploits the autoregressive structure that is integral to next-token prediction; 
(ii) as we show in the next subsection, the ordering of the two probability distributions within the KL divergence leads to an objective function which can be evaluated using only samples from $\poL$ as detailed in Data Assumption \ref{das:sequences}.

We thus start by showing how to exploit structure in the objective
function, using autoregressive factorization. \rev{From \eqref{eq:auto}
and \eqref{eq:auto3} we obtain, for $\theta = (\theta_1,\dots,\theta_L)$,}
\begin{subequations}
\allowdisplaybreaks
\label{eq:lossd}
\begin{align}
\J(\theta) &=  \mathbb{E}_{\foL \sim \poL}\Bigg[\log \left(\frac{\poL(\foL)}{\pioL(\foL;\theta)}\right)\Bigg] \label{eq:onea}\\
&=\sum_{\ell=1}^L \mathbb{E}_{\foL \sim \poL}\Bigg[\log \left(\frac{\pl(\fil|\folm)}{\pil(\fil|\folm;\theta_\ell)}\right)\Bigg] \label{eq:obj_sum} \\
&= \sum_{\ell=1}^L \mathbb{E}_{\fol \sim \pol}\Bigg[\log \left(\frac{\pl(\fil|\folm)}{\pil(\fil|\folm;\theta_\ell)}\right)\Bigg] \label{eq:obj_sum_marginal} \\
&= \mathsf{const}-\sum_{\ell=1}^L \mathbb{E}_{\fol \sim \pol} \log\pil(\fil|\folm;\theta_\ell), \label{eq:oned}
\end{align}
\end{subequations}
where $\mathsf{const}$ indicates a scalar that does not depend on $\theta.$ We note that ~\eqref{eq:obj_sum} follows from imposing the autoregressive structure
on \eqref{eq:onea}. Then~\eqref{eq:obj_sum_marginal} follows because the input to the logarithm depends only on sequences of length $\ell.$
Finally we obtain~\eqref{eq:oned} since $\pl(\fil|\folm)$ does not depend on $\theta.$ Given~\eqref{eq:oned}, and the proposed model form in~\eqref{eq:qarc},
it is thus natural to define the objective:
\begin{equation} \label{eq:abi}
 \Loss(\theta) = -\sum_{\ell=1}^L \mathbb{E}_{f_{1:\ell} \sim \mathsf{p}_{1:\ell}} \log\pil(\fil|\folm;\theta_\ell).
\end{equation}
Since $\J(\theta) = \Loss(\theta) + \mathsf{const}$ it follows that the minimizers of $\J(\cdot)$ and $\Loss(\cdot)$ coincide.

\begin{remark} The objective in Optimization Problem~\ref{op:KLjoint}, defined through the KL divergence between sequences of length $L$,
can also be defined in terms of any other choice of divergence $\mathsf{D}$. However, in order for these divergences to be the basis of practical algorithms, it is necessary to be able to minimize this divergence without knowledge of the normalization constant of the target measure we are approximating, and given only samples from $\pol$ for all $\ell \in \Se^L.$

Furthermore we note that use of $\mathsf{D}=\dkl$ yields, using positivity of KL divergence and (\ref{eq:nevesb}), the inequality
$$\mathsf{D}(\poL|\pioL) \geq \mathbb{E}_{f_{1:m-1} \sim \mathsf{p}_{1:m-1}(\cdot)}\mathsf{D}\bigl(\mathsf{p}_m(\cdot|f_{1:m-1})||\pim(\cdot|f_{1:m-1})\bigr).$$
Thus making the left hand side small through minimization also ensures that the right-hand side is small, for any $m \in \Se^L.$ 
This provides a quantitative estimate of the error in next-token prediction for any such $m.$ On the assumption that 
next-token prediction is
used to generate text, use of divergences other than KL
will ideally share this, or similar, desirable property enabling the bounding of next-token probability prediction in terms of the objective
on sequences of length $L.$
\end{remark}

\subsection{Empirical Objective} \label{ssec:emploss}

\rev{In the calculations within the preceding subsection we have worked with a general parameterization of the approximate autoregressive model in the form~\eqref{eq:auto3}. However, as we proceed to discuss empiricalization, and hence what is done in practice,
we will now work with an approximate autoregressive model in the 
form~\eqref{eq:auto3_shared}. Thus, we set $\theta_\ell=\theta,$ independently of $\ell.$ Furthermore we write $\theta=(\varthetaa,\varthetae),$
where $\phi$, mapping from token space $\Voc$ to Euclidean embedding space
$\R^{\de}$, is defined by the matrix $\varthetae.$ We then note that \eqref{eq:abi} yields
\begin{subequations}
\begin{align} 
\Loss(\theta) &= -\sum_{\ell=1}^L \mathbb{E}_{f_{1:\ell} \sim \mathsf{p}_{1:\ell}} \log\pil(\fil|\folm;\theta), \label{eq:lwca}\\
&= -\sum_{\ell=1}^L \mathbb{E}_{f_{1:\ell} \sim \mathsf{p}_{1:\ell}} \log\ql\bigl(\fil|\phi \circ \folm;\varthetaa\bigr). \label{eq:lwcb}
\end{align}
\end{subequations}
Note that form~\eqref{eq:lwcb} makes explicit that, through application
of the mapping $\phi,$ the model takes as input sequences in $\F^\ell_e$, allowing for the generalization properties of machine learning algorithms in Euclidean space to be exploited.} 

\begin{remark}
We note from \eqref{eq:obj_sum_marginal}, and applying the constant $\theta$ structure
given in \eqref{eq:lwca}, that
\begin{subequations}
\label{eq:neves}
\begin{align}    
\J(\theta) &= \sum_{\ell=1}^L \mathbb{E}_{\folm \sim \polm}\mathbb{E}_{\fil \sim \pl(\cdot|\folm)}\Bigg[\log \left(\frac{\pl(\fil|\folm)}{\pil(\fil|\folm;\theta)}\right)\Bigg] \\
&= \sum_{\ell=1}^L \mathbb{E}_{\folm \sim \polm} \dkl\bigl(\pl(\cdot|\folm)||\pil(\cdot|\folm;\theta)\bigr). \label{eq:nevesb}
\end{align}
\end{subequations}
This shows that minimizing the objective function aims at choosing parameter $\theta$ so that next-token prediction accurately mimics next-token probabilities for conditioning sequences of all lengths up to $L-1$, on average; the fact that the same parameter $\theta$
appears for all sequence input lengths couples the different sequence lengths. Furthermore, since all terms in this sum are positive, it is
apparent that had each conditional $\pil$ been parameterized independently by $\theta_\ell$, for different $\ell$, then it would be possible to optimize each conditional alone. \rev{Moreover, if the model class $\pil(\fil|\folm;\cdot)$ is
sufficiently rich so as to include the true conditional probability, then
the minimizer of each term in the objective would be $\pil(\cdot|\folm;\thetas_\ell) = \pl(\cdot|\folm)$, thereby recovering the true autoregressive factorization.}
\end{remark}

We have thus identified $\Loss(\theta)$ in \eqref{eq:lwcb} 
as the function we wish to minimize. We have available to us data as prescribed in Data Assumption~\ref{das:sequences}. Using this we can form the empirical measure that approximates $\pol$, for any $\ell \in \Se^L$:
$$\rev{\pol^N \coloneqq \frac{1}{N} \sum_{n=1}^N \delta_{\fdata_{1:\ell}^{(n)}}.}$$
Using this approximation in the objective function $\Loss(\cdot)$ from~\eqref{eq:lwcb} we obtain the objective function minimized in practice:
\begin{equation}
\label{eq:lossN}    
\rev{\Loss^N(\theta) = -\sum_{n=1}^N \sum_{\ell=1}^L \log\ql\bigl(\fdata_\ell^{(n)}|\phi \circ \fdata_{1:\ell-1}^{(n)};\varthetaa\bigr).}
\end{equation}

\begin{remark} \label{rem:4p7}
An unbiased estimator for the objective function $\Loss$ in~\eqref{eq:lwcb} is given by drawing $N$ i.i.d.\thinspace sequences of length $\ell$, and doing so independently for each $\ell = 1,\dots,L$. However, correlated samples can also be
used: the empirical objective in~\eqref{eq:lossN} is defined by first drawing sequences of maximum length $L$ and then using partial subsets of the same sequence to estimate the expectation for each $\ell$.
\end{remark}

\rev{Minimizing objective~\eqref{eq:lossN} is typically performed by computing derivatives of the objective function with respect to the model parameters. The derivatives with respect to $\theta = (\phi, \vartheta)$ are derived in Appendix~\ref{sec:deriv}. More details on the  optimization algorithms that leverage estimators for these gradients to find optimal models are provided in Section~\ref{sec:minimizing_empirical}.}

Learning the model parameters by minimizing~\eqref{eq:lossN} given text sequences from various domains is also known as~\emph{supervised pretraining}. The resulting next-token prediction
model found this way is often referred to as a \emph{pretrained model} or a \emph{foundation model}. These models can be adapted for specific  domains using fine-tuning or reinforcement learning, which we describe next.

\subsection{Fine-Tuning The Empirical Objective} \label{ssec:emplossFT}

In the previous subsection, we describe the optimization of parameter $\theta=(\varthetaa,\varthetae),$ %
for the LLM through minimization of an empirical objective function~\eqref{eq:lossN} that is based on data reflecting the variety of tasks for which the LLM might be used. In the context of this subsection, we refer to this as \emph{pretraining}. \emph{Fine-tuning} with task-specific data is used to improve the performance of a pretrained model on a specialized class of tasks. The fine-tuned model is often found from an optimization problem identical to that defined by \eqref{eq:lossN}, but using a different
(and typically much smaller) data set. The following assumption provides a setup for the fine-tuning data, \rev{consisting of $N_{\mathsf{fine}}$ sequences}, which may be drawn from a distribution $\poL'$ that is different from the distribution $\poL$ used in pretraining. The distribution $\poL'$ will typically represent data for a specific task, but will ideally be close to $\poL$ after conditioning on a set related to the specific task defining the fine-tuning data.

\begin{dataassumption} \label{das:sequences_finetuning} We collect samples of i.i.d.\thinspace \rev{sequences $\{\fdata_{1:L}^{(n)}\}_{n=1}^{N_{\mathsf{fine}}} \sim \poL'$.}
\end{dataassumption}

\rev{Note that, to avoid proliferation of symbols, we have used $\fdata$, as in
Data Assumption~\ref{das:sequences}, to denote the training data.}
Using this fine-tuning training data, 
the objective function minimized during the fine-tuning  stage is given by:
\begin{equation}
\label{eq:lossN22}    
\rev{\Loss^{N_{\mathsf{fine}}}(\theta) = -\sum_{n=1}^{N_{\mathsf{fine}}} \sum_{\ell=1}^L \log\ql\bigl(\fdata_\ell^{(n)}|\phi \circ \fdata_{1:\ell-1}^{(n)};\varthetaa\bigr).}
\end{equation}
Algorithms for this fine-tuning optimization problem are typically initialized at the model parameters found from pretraining, thus connecting the information
in the smaller, fine-tuning, data set to the larger data set used for pretraining. 
The resulting (approximate) minimizer is denoted by $\theta^{\mathsf{fine}}.$

\begin{remark} 
The updates of the parameters during fine-tuning (e.g., neural network weights) are commonly restricted to be low-rank matrices using the widely-adopted Low-rank Adaptation (LoRA) procedure~\cite{hu2022lora}. This restriction of the parameter changes improves computational efficiency by reducing the number of trainable parameters and results in an implicit form of regularization; see Section~\ref{sec:minimizing_empirical} for discussion of other regularization approaches employed during LLM training.
\end{remark}

\subsection{Reinforcement Learning} \label{subsec:RL}

It is frequently the case that LLMs are modified to increase alignment of model responses with human preferences.
This may be performed instead of, or in addition to, fine-tuning. Alignment to human preferences can be achieved using \emph{reinforcement learning}. In this approach the model parameters are updated to maximize a reward function $\re \colon \F^L \rightarrow \R$ that approximates human preferences. In practice, the reward function is learned~\cite{christiano2017deep, ouyang2022training, ziegler2019fine} or is defined implicitly by preference data~\cite{rafailov2023direct}. A common framework is known as Reinforcement Learning from Human Feedback (RLHF)~\cite{stiennon2020learning, azar2024general}; this approach minimizes the negative reward while staying close to a reference model obtained from a pretraining or fine-tuning procedure as in subsections~\ref{ssec:emploss} and~\ref{ssec:emplossFT}. %
The RLHF objective is given by:
\begin{equation}
\label{eq:lossRL}
\Loss^{\mathsf{RL}}(\theta) = -\mathbb{E}_{\foL \sim \pioL(\cdot;\theta)}[\re(\foL)] + \beta \dkl\bigl(\pioL(\cdot\,;\theta)||\pioL(\cdot\,;\theta^{\mathsf{ref}}));
\end{equation}
here $\beta > 0$ is a tuning parameter and $\theta^{\mathsf{ref}}$ refers to the minimizer of~\eqref{eq:lossN} or~\eqref{eq:lossN22}. This objective function is typically minimized starting from the reference model parameters $\theta^{\mathsf{ref}}$. In practice, the minimization problem is solved numerically by approximating the expectation over $\pioL(\cdot;\theta)$, which is required in the definition of both terms, via sampling. Given a set of $N_{\mathsf{RL}}$ samples $\foL^{(n)} \sim \pioL(\cdot;\theta)$, which depend on $\theta$ and thus are generated at each training iteration, the empirical objective function is given by:
\begin{equation}
\label{eq:lossRLN}
\Loss^{\mathsf{RL},N}(\theta) = \frac{1}{N_{\mathsf{RL}}} \sum_{n=1}^{N_{\mathsf{RL}}} -\re(\foL^{(n)}) + \beta \log\left(\frac{\pioL(\foL^{(n)}\,;\theta)}{\pioL(\foL^{(n)}\,;\theta^{\mathsf{ref}})}\right).
\end{equation}
\begin{remark} Pretraining and fine-tuning, defined in subsections~\ref{ssec:emploss} and~\ref{ssec:emplossFT} respectively, require samples from the data distributions as in Data Assumptions~\ref{das:sequences} and~\ref{das:sequences_finetuning}. Solving these optimization problems does not require generating sequences from the LLM during training. On the other hand, evaluating objective~\eqref{eq:lossRL} in reinforcement learning does require sampling such sequences. This facilitates the calculation of the
expected reward and the regularizing divergence. Given that the densities for both approximate distributions are analytically available, we can employ various divergences here, not just the reverse KL divergence used above.
\end{remark}

\subsection{Minimizing The Objective Functions} \label{sec:minimizing_empirical}

Approximate minimization of all of the objective functions~\eqref{eq:lossN}, \eqref{eq:lossN22} and \eqref{eq:lossRLN}
is typically achieved using a stochastic gradient descent (SGD) procedure~\cite{robbins1951stochastic}. 
Currently deployed versions of the methodology, such as the widely used Adam optimization method~\cite{adam2014method}, regularize to avoid overfitting to the data and to improve generalization: that is, to promote desirable model response in the generation of new sequences not present in the empirical dataset defined in Data Assumption~\ref{das:sequences}; or,
in the fine-tuning set-up, new sequences not present in the empirical dataset defined in Data Assumption~\ref{das:sequences_finetuning}. Common regularization methods applied during SGD include dropout~\cite{zhou2020scheduled, zehui2019dropattention} and early stopping~\cite{kaplan2020scaling}. Sometimes $L^2$ regularization (often termed weight decay in the machine learning community), or other Tikhonov regularization~\cite{engl1996regularization} techniques, is applied directly to the objective function, rather than implicitly regularizing through the iterative optimization method~\cite{d2024we, kobayashi2024weight}. 

\section{Architecture} \label{sec:arch}

In this section we detail the architecture enabling approximation of $\poL$ from \eqref{eq:auto} by $\pioL$ defined by~\eqref{eq:auto3_shared}, \eqref{eq:qarc}. We recall the form of the approximation here:
\begin{subequations}
\label{eq:sumz}
\begin{align}
\pioL(\foL;\theta)&=\Pi_{\ell=1}^L \pil(f_\ell|f_{1:\ell-1};\theta),\\
\pim(f_m|f_{1:m-1};\theta)&=\qem\bigl(f_m|\phi \circ f_{1:m-1};\varthetaa\bigr).
\end{align}
\end{subequations}
The parameter $\theta=(\varthetaa,\varthetae)$ is to be chosen to optimize the approximation. Through Definitions~\ref{def:thetam}, \ref{def:phis} we have detailed the dependence on matrix $\varthetae$ which defines the mapping $\phi$; recall that this mapping enables $\qem$ to take sequences in \emph{Euclidean space} as input, rather than the discrete \emph{token space}, hence facilitating the possibility of \emph{interpolation}. In particular, this interpolation allows the model to make next-token predictions for sequence that are not in the training data. In this section, we detail how the parameter $\varthetaa$ enters (\ref{eq:sumz}b); then the value of the parameter $\theta$ is chosen to make the interpolation effective, via the training process described in Section \ref{sec:train}.

To parameterize the model $\qem$
we will introduce the \emph{set transformer}, which is based on the \emph{attention mechanism}. Subsection \ref{ssec:mopm2} is a self-contained description of the attention mechanism; we introduce \emph{attention maps}, families of parameterized maps acting on the space of Euclidean vector-valued sequences into itself, for sequences of any length. These attention maps are combined with \emph{normalization maps} and \emph{neural networks} to define the \emph{attention block}. In Subsection \ref{ssec:sett2} attention blocks are composed with one another, and with a final map into Euclidean space, to define the set transformer. The resulting map, parameterized by $\varthetaa$, takes as input Euclidean vector-valued sequences of arbitrary length and outputs elements in the probability simplex on $\Voc.$ Subsection \ref{ssec:temp} describes refinements of, or alternatives to, the attention mechanism; the purpose is to incorporate more complex temporal structure, such as causality, and relatedly to reduce the complexity of the evaluation of the attention map.

\subsection{The Attention Mechanism} \label{ssec:mopm2}

We start by defining the \emph{attention mechanism} $\ATT:\F^{(m-1)}_e \to \F^{(m-1)}_e$. The $k^{th}$ entry of the sequence
$\ATT(h_{1:m-1})$ is defined by
\begin{subequations} \label{eq:transport_map2}
\begin{align}
\ATT(h_{1:m-1})_k & := h_k+\frac{1}{Z_k} \sum_{l=1}^{m-1} \exp\bigl(\langle Q h_k, K h_l \rangle_{\R^c}\bigr)\, V h_l, \label{eq:seqseqMap}\\
Z_k & := \sum_{j=1}^{m-1} \exp\bigl(\langle Q h_k, K h_j \rangle_{\R^c}\bigr),
\end{align}
\end{subequations}
where $Q,K \in \R^{c \times \de}$ and $V \in \R^{\de \times \de}$. 
Note that the output sequence at \emph{any} location $k$ depends on the input sequence at \emph{every} location $l=1, \cdots, m-1.$

\begin{remark} \label{rem:atp} \rev{It can be helpful to view the attention map
through the lens of probability. To this end, we first define the conditional probability of index $l$, given index $k$, parameterized by the sequence $h$: 
$$p(l|k;h) = \frac{1}{Z_k} \exp \bigl(\langle Qh_k, K h_l \rangle_{\R^c}\bigr).$$
Then,
$$\ATT(h_{1:m-1})_k = h_k+V\,\mathbb{E}_{l \sim p(\cdot|k;h)}\bigl[h_l\bigr].$$
Notice that the expectation means that $\ATT(h_{1:m-1})_k$ depends on the
entire input sequence $h_{1:m-1}$, making the transformation $\ATT$ highly
nonlocal, as well as nonlinear. The matrices $Q, K$ and $V$, often referred to in the applied literature as \emph{query, key} and \emph{value} matrices~\cite{vaswani2017attention}, can be learned so as to emphasize valuable correlations between different entries in
an input sequence, using this non-locality. The continuum picture of this probabilistic interpretation is presented in Appendix~\ref{sec:arch_probabilistic}.}
\end{remark}

We introduce the map $\NOF: \R^{\de} \to \R^{\de}$,   representing a normalization procedure, and extend it to map
$\NOF:\F^{(m-1)}_e \to \F^{(m-1)}_e$ by allowing it to act pointwise on elements in the input sequence.
Similarly, we introduce map $\NNF: \R^{\de} \to \R^{\de}$, representing a neural network, and extend it to map
$\NNF:\F^{(m-1)}_e \to \F^{(m-1)}_e$, again by allowing it to act pointwise on elements in the input sequence.
Examples \ref{ex:ln} and \ref{ex:nn} describe normalizations and neural networks that are used in practice.

The \emph{attention block} $\Ac:\F^{(m-1)}_e \to \F^{(m-1)}_e$ is then defined by the following concatenation of operations:
\begin{subequations} \label{eq:ow2}
\begin{align}
h_{1:m-1} & \leftarrow \ATT(h_{1:m-1})\\
h_{1:m-1} & \leftarrow \NOF\bigl(h_{1:m-1}\bigr),\\
h_{1:m-1} & \leftarrow h_{1:m-1}+\NNF\bigl(h_{1:m-1}\bigr), \label{eq:owc2} \\
h_{1:m-1} & \leftarrow \NOF\bigl(h_{1:m-1}\bigr).
\end{align}
\end{subequations}
\rev{Here each assignment, denoted by $\leftarrow$, represents an in-place operation that updates the sequence and whose output becomes the input to the operation that follows. Thus, the attention block comprises composition of the attention map,
with a normalization, then with a residual form neural network, and finally 
composition with another normalization.}

\begin{remark}
    \label{rem:qkv}
    \rev{Note that the matrices defining the attention map, and hence the attention block, are independent of $m.$} The maps defining the normalization and neural networks are defined pointwise for each element of the sequences to which they are applied, and hence the parameters defining these maps are also independent of $m.$
    The independence of these parameters with respect to $m$ is central to having a single parameter $\varthetaa$ in the definition of the architecture, and in particular in $\qem$ as defined in \eqref{eq:qarc}. The complete specification of $\varthetaa$ is 
    given in Remark \ref{rem:param}.
\end{remark}

\begin{example}[Layer Normalization] \label{ex:ln} Define a diagonal matrix $D \in \R^{\de \times \de}$ and vector $b \in \R^{\de}$, which are learnable parameters. Layer normalization $\NOF \colon \R^{\de} \rightarrow \R^{\de}$ applies the rescaling:
\begin{equation} \label{eq:mean_variance_rescaling2}
\rev{\NOF(u) = D\frac{(u - m(u))}{\sqrt{\sigma^2(u) + \varepsilon}} + b, \quad
m(u) = \frac{1}{\de} \sum_{i=1}^{\de} u_i, \quad
\sigma^2(u) = \frac{1}{\de} \sum_{i=1}^{\de} (u_i - m(u))^2.}
\end{equation}
Here $\varepsilon > 0$ is a small parameter
included for numerical stability. We define $\theta_{\mathfrak{no}}=(D,b)$, 
the learnable parameters of the normalization map $\NOF.$
Usually $\varepsilon$ is user prescribed, not learned.

\rev{A specific form of normalization makes the choice $m(u) = 0$ and $b = 0$ and applies the rescaling:
\begin{equation} \label{eq:RMSnorm3}
\NOF(u) = D\frac{u}{\sqrt{\sigma^2(u) + \epsilon}}, \qquad \sigma^2(u) = \frac{1}{\de} \sum_{i=1}^{\de} u_i^2.
\end{equation}
We define $\theta_{\mathfrak{no}} = (D)$ as the learnable parameter of the map. This is known as \emph{root mean square normalization} (RMSNorm).
}

\rev{One further simplification} does not involve mean shift,  diagonal rescaling, shift and stabilization; if we make the choices $\varepsilon =0$, $b = 0$, $m(u) = 0$ and $D = I$, 
then we obtain the map given by
\begin{equation} \label{eq:RMSnorm2}
\NOF(u) = \frac{u}{|u|_2}.
\end{equation}
This operation constrains the output to the unit sphere: $\NOF \colon \R^{\de} \rightarrow \mathbb{S}^{\de-1}$. 
In this case, there are no learnable parameters in the map $\NOF$.
\end{example}

\begin{remark} \label{rem:ln}
\rev{In contrast to $\ATT$, map $\NOF$ is local, it acts in the same way
at every point on the sequence; see \cite{ba2016layer}. An important role of normalization is to preserve the scale of hidden representations in a deep network. This stabilizes evaluations of the objective and its gradients, and empirically allows for larger learning rates and  faster convergence in training.}
\end{remark}

\begin{example}[Neural Network] \label{ex:nn} 
Often, $\NNF$ is implemented as a two layer feed-forward neural network of the form
$$\NNF(x)=W_2\, \mathsf{act}\bigl(W_1 x+b_1\bigr)+b_2,$$
where $W_1 \in \R^{c' \times \de}$,
$W_2 \in \R^{\de \times c'}$, $b_1 \in \R^{c'}$ and $b_2 \in \R^{\de}$
and where $\mathsf{act}: \R \to \R$ is an activation function, such as ReLU, 
extended to act pointwise on $\R^{c'}$;  integer $c'$ is a user-prescribed channel width, typically different from channel width $c$ in the definitions of matrices $Q,K.$
Then, $\theta_{\mathfrak{nn}}=(W_1, W_2, b_1, b_2)$ defines the learnable parameters entering $\NNF.$
\end{example}

\begin{remark} \label{rem:nn}
\rev{Like $\NOF$, map $\NNF$ is local and acts in the same way on every point
in the input sequence; and it does so by applying a standard deep neural
network~\cite{goodfellow2016deep}. The combination of $\NNF$, which is local since it processes each point independently, and $\ATT$, which as we discuss
in Remark \ref{rem:atp} acts nonlocally on entire sequences, is
shown to benefit overall expressivity~\cite{vaswani2017attention}.}
\end{remark}

\subsection{Set Transformer} \label{ssec:sett2}

In this subsection we explain how the attention block from Subsection \ref{ssec:mopm2} can be used to
define the next-token probability model $\qem\bigl(\cdot\,|\cdot\,; \varthetaa\bigr): \F^{(m-1)}_e \to \mathcal{P}(\Voc).$
The key component is the \emph{set transformer} $\mathsf{T}: \F^{(m-1)}_e \to \R^{\de}$ defined by
\begin{equation}
\label{eq:settdef}    
\mathsf{T}(h_{1:m-1}) = \mathsf{P} \circ \Ac_K \circ \dots \Ac_1(h_{1:m-1});
\end{equation}
here %
$\Ac_k \colon \F^{(m-1)}_e  \rightarrow \F^{(m-1)}_e$ are attention blocks, as described in Subsection~\ref{ssec:mopm2}, with different parameters for each $k \in \Se^K$,
and the \emph{pooling} operator $\mathsf{P} \colon \F^{(m-1)}_e \rightarrow \R^{\de}$ is now defined.

The pooling operator $\mathsf{P} \colon \F^{(m-1)}_e  \rightarrow \R^{\de}$ is a function acting on a sequence and returning a vector. In the context of the set transformer, $\mathsf{P}$ is typically chosen to be both permutation-invariant and to be defined for input sequences of arbitrary length. The following example contains two widely used choices that satisfy these properties. 

\begin{example} \label{ex:pooling} The mean pooling operation is given by taking the average of the sequence elements: %
$$\mathsf{P}\bigl(h_{1:m-1}\bigr) = \frac{1}{m}\sum_{k=1}^m h_k.$$
The max pooling operation is given by choosing the  maximum along each coordinate of the sequence: %
$$\mathsf{P}\bigl(h_{1:m-1}\bigr)_j = \max((h_1)_j,\dots,(h_m)_j), \qquad j = 1,\dots,\de.$$
\end{example}

We now introduce the softmax function, using it here to convert
elements of $\R^{\Voc}$ into points on the probability simplex $\prob(\Voc).$
Here $\prob(\Se^d)$ denotes the set of probability measures on sets of cardinality $d$, labeled without loss of generality by the integers $\Se^d.$

\begin{definition} \label{def:softmax} For any integer $d$, let $\sigma \colon \R^d \rightarrow \prob(\Se^d)$ be the \emph{softmax} function defined as 
$$\sigma(z)_k = \frac{e^{z_k}}{\sum_{j=1}^d e^{z_j}}.$$
\end{definition}

Given the set transformer $\mathsf{T}: \F^{(m-1)}_e \to \R^{\de}$, together with linear operator $\Bl \colon \R^{\de} \rightarrow \R^{|\Voc|}$, we define $\qem(\cdot|h_{1:m-1};\varthetaa) \in \mathcal{P}(\Voc)$ by
\begin{equation} \label{eq:next_token_model}
\qem(\cdot|h_{1:m-1};\varthetaa) = \sigma\Bigl(\Bl\circ\mathsf{T}\bigl(h_{1:m-1}\bigr)/\tau\Bigr).
\end{equation}
Here, $\tau > 0$ is a \emph{temperature} parameter. Choosing $\tau$ small leads to greater concentration of the output probability distribution on the most probable tokens under the learned model, whilst larger $\tau$ leads to greater variability in the output; this is discussed in Remark~\ref{rem:temp}.

\begin{remark}
Note that it is possible to dispense with $\Bl$ if $\mathsf{T}$ were designed to map to $\R^{|\Voc|}$ rather than to $\R^{\de}$. 
However the object $\mathsf{T}: \F^{(m-1)}_e \to \R^{\de}$ as defined is useful for other tasks, different from next-token prediction,
which is why we have defined $\Bl$ and $\mathsf{T}$ separately. The process of using $\mathsf{T}$ in related tasks is known as \emph{transfer learning}; see~\cite{raffel2020exploring} for an application of pre-trained text transformers to downstream tasks such as semantic analysis and summarization.
\end{remark}

\begin{remark} \label{rem:param}
Taken together, the parameters appearing in all the attention blocks $\{\Ac_k\}_{k=1}^K$, together with the parameters defining linear map $\Bl$, define  the learned parameters $\varthetaa$ appearing in the next-token prediction model. The parameter $\tau > 0$ is typically not learned, but is user-specified; see Remark \ref{rem:temp}.    
\end{remark}

\subsection{Encoding Temporal Structure} \label{ssec:temp}

There are several features of the attention map, all related to temporal structure, which are noteworthy and which suggest
modifications of the basic attention map as defined in the previous subsection. These relate to a permutation property of
the attention map, to the lack of causality in the attention map, and to the undesirable quadratic scaling of the
attention map with respect to sequence length.

\subsubsection{Positional Encoding}

The map $\ATT$ defined in \eqref{eq:transport_map2} is 
permutation equivariant with respect to elements of the 
input sequence: applying a permutation to the sequence before applying
$\ATT$, and then applying the inverse of the permutation afterward, yields the same result as simply applying $\ATT.$  
Consequently, such a map discards any positional information in the sequence. 
The goal of \emph{positional encoding} is to incorporate information on the location of each element in the sequence. 
Arguably the simplest way to do this is to take $h \in \{f: \Se^M \rightarrow \R^{\de}\}$ and map it to $h' \in
\{f: \Se^M \rightarrow \R^{\de+1}\}$ by defining $h'_m=(m,h_m^\top)^\top.$ Generalization of this idea to functions
varying over open subsets of $\R^d$ is widely used in applications of artificial intelligence to problems arising
in science. However, in LLMs different ideas are used and we now outline  how they are defined.

The prevailing methodology in LMMs is to map $h \in \{f: \Se^M \rightarrow \R^{\de}\}$ to $h' \in
\{f: \Se^M \rightarrow \R^{\de}\}$ by setting $h_m' = h_m + r_m$ 
where $r \colon \Se^{M} \rightarrow \R^{\de}$ computes the \emph{absolute positional encoding} as a function of the index in the sequence. The embedding can be learned using independent parameters for each input index, or contain a small number learnable parameters. 

\begin{example}[Trigonometric Functions] \label{ex:tf} Given an even dimension $\de$, the \emph{trigonometric embedding} is defined as follows. Set $\lambda_j: = 2^{2j/\de}$ for $j = 1,\dots,\de/2$ and then
$$r_m = (\sin(m/\lambda_1), \cos(m/\lambda_1), \sin(m/\lambda_2), \cos(m/\lambda_2), \dots, \sin(m/\lambda_{\de/2}), \cos(m/\lambda_{\de/2})).$$
\end{example}

\subsubsection{Modified Inner-Product To Address Positional Encoding}
\label{remark:other_inner_products} 

When evaluating the attention map~\eqref{eq:transport_map2}, on sequence 
$h \in \F^{m-1}$, it is necessary to compute 
\begin{equation} \label{eq:euclidean_inner_product}
\langle Q h_k, K h_l \rangle_{\R^c}.
\end{equation}
The  inner-product \eqref{eq:euclidean_inner_product} can also be adjusted to reflect temporal structure,
and in particular to address the issue of positional encoding. Recall the additive function $\rr \colon \Se^{M} \rightarrow \R^{\de}$ defined in the previous
subsubsection, and here set this additive function to zero: $\rr \equiv 0$. Instead, \emph{relative embedding} defines a weighted inner-product based on the distance between the Euclidean image of tokens in the sequence; in this sense it is a positional encoding. The RoFormer in~\cite{su2024roformer} sets absolute positional encoding sequence $\rr \equiv 0$ and replaces the Euclidean inner-product~\eqref{eq:euclidean_inner_product}
appearing in~\eqref{eq:transport_map2} by 
$$\langle Qh_k, Kh_l\rangle _{\R^{c},\mathsf{roformer}} \coloneqq \text{Re}\left[\sum_{j=1}^c (Qh_k)_j^\top (Kh_l)_j e^{i(k-l)\psi_j}\right];$$
here $\text{Re}(\cdot)$ denotes taking the real part of the argument.
The complex weight accounts for the relative position of the inputs: 
the $\psi_j$ are chosen to decay as $\gamma^{-2j/d}$ for a large constant $\gamma$, resulting in more oscillatory modes for elements that are more separated in the sequence.

\subsubsection{Modified Inner-Product To Reduce $\mathcal{O}(m^2)$ Complexity} \label{ex:sparse_attention2}
Modification of the inner-product, introduced in the preceding subsubsection, can be used for a different purpose: to address the issue of reducing complexity, 
in $m$, of evaluating the attention map~\eqref{eq:transport_map2}. When evaluating 
this map on a sequence $h \in \F^{m-1}$  
it is necessary to compute the inner-product \eqref{eq:euclidean_inner_product}
for all $1 \le k,l \ll m-1$. Thus, $(m-1)^2$ inner-products must be calculated. \emph{Causal attention} defines the attention map in~\eqref{eq:transport_map2} by replacing the inner-product in~\eqref{eq:euclidean_inner_product} with 
$$\mathbbm{1}_{l \leq k}\,\langle Q h_k, K h_l \rangle_{\R^c}.$$
The masking implied by the indicator function ensures that it is only necessary to compute $(m-1)(m-2)/2$, rather than $(m-1)^2$, inner-products.
More generally, given an adjacency matrix $A \in \{0,1\}^{(m-1) \times (m-1)}$, \emph{sparse attention} maps replace \eqref{eq:euclidean_inner_product} with
$$ A_{lk}\, \langle Q h_k, K h_l \rangle_{\R^c}.$$
The adjacency matrix ensures that interactions between sequence elements are only computed when $A_{kl} \neq 0$. For example, given a bandwidth $r > 0$, we can consider a banded adjacency matrix that has nonzero entries only for $A_{kl}$ with 
$|k - l| \leq r.$  For a bandwidth $r \ll m-1$, the complexity of computing the 
attention map will then scale linearly with the sequence length as 
$\mathcal{O}(r(m-1))$; see Section~\ref{ssec:literature} for references to recent computational methodologies with linear dependence.

\subsubsection{Replacing The Attention Map To Reduce $\mathcal{O}(m^2)$ Complexity} \label{sssec:ssm}

The previous subsubsection shows how both causality, and the related wish to obtain linear scaling in $m$, can be achieved (separately) by selecting a subset of the $(m-1)^2$ inner-products required to define the attention map~\eqref{eq:transport_map2}, and
setting all others to zero. An approach to achieving both of these goals simultaneously is through \emph{state-space models}. 
These work by replacing  $\ATT:\F^{(m-1)}_e \to \F^{(m-1)}_e$
in the attention block with a different sequence-to-sequence map $\SSM:\F^{(m-1)}_e \to \F^{(m-1)}_e.$ 
This map is defined by the following iteration, for $k=0, \cdots, m-2:$
\begin{subequations} \label{eq:ssm0}
    \begin{align}
v_{k+1} & =\mf(v_k,h_k;\theta^{\mf}), \quad v_0=\mv,\\
\SSM(h_{1:m-1})_{k+1} & = \mg(v_{k+1},h_{k+1};\theta^{\mg}).
    \end{align}
\end{subequations}
Here each $v_k \in \R^{c''}$ for some preselected channel width $c''$. 
Notice that $v_{0:m-1}$ accumulates information about the sequence
$h_{1:m-1}$, but does so in a causal fashion: $v_{k+1}$ depends only on $h_{1:k}.$ Thus $\SSM(h_{1:m-1})_{k+1}$ depends only on
$h_{1:k+1}.$ The parameters $\mv$, $\theta^{\mf}$ and $\theta^{\mg}$ are learned. The complexity of evaluating $\SSM(h_{1:m-1})$ is
linear in $m.$ In practice, the model is often implemented
using linear $\mf,\mg$, defined through discretizing a linear and continuous time analog of \eqref{eq:ssm0}; 
details may be found in Appendix~\ref{sec:ssmcts}.

\rev{The potential for state-space models was first realized in Mamba~\cite{gu2023mamba}. The approach not only reduces the quadratic complexity of attention in sequence length, but also leads to less demanding memory requirements, 
in comparison with attention-based methods. 
State space models have produced state-of-the-art performance on audio and genomics datasets for sequences of length $\mathcal{O}(10^6)$. \cite{gu2023mamba, lahoti2026mamba}.
Other recurrent structures have been proposed, such as the RWKV approach in \cite{peng2023rwkv}; this too reduces sequence length complexity to linear, and reduces memory demand. However such efficiency gains can come at the cost of expressivity of the resulting architecture~\cite{waleffe2024empirical}. There is current research activity aimed at balancing efficiency and expressivity by combining  transformer and state-space approaches~\cite{blakeman2025nemotron,lahoti2026mamba}.}

\section{Sequence Generation} \label{sec:seqgen}

In Section~\ref{sec:ntp} we described the framework for next-token prediction. Here we detail how it may be used to generate text sequences. Recall that LLMs roll out sequences in $\Voc$ step-by-step, starting with a sequence of a given length, then generating a sequence of length one greater, and increasing the sequence length by one in steps until it reaches length $L$. 
The basic building block for this roll-out is the learned model \eqref{eq:auto33} which we recall here.
Combining \eqref{eq:auto3_shared} and \eqref{eq:qarc}, and deploying the optimal choice  $\theta^\star=\bigl(\varthetaa^{\star},\varthetae^{\star})$ found through (typically only approximately) minimizing $\Loss^N(\theta)$ as defined in \eqref{eq:lossN}, we obtain
\begin{subequations}
 \label{eq:auto33}
 \begin{align}
\pioL\bigl(\foL;\theta^\star\bigr)& =\Pi_{\ell=1}^L \pim\bigl(f_\ell|f_{1:\ell-1};\theta^{\star}\bigr),\\
\pim\bigl(f_\ell|f_{1:\ell-1};\theta^{\star}\bigr) & =\qem\bigl(f_\ell|\phi^\star \circ f_{1:\ell-1};\varthetaa^{\star}\bigr);
\end{align}
\end{subequations}
here $\phi^\star$ is given by function $\phi$ from Definition \ref{def:thetam}, with $\phi=\phi^\star$ defined by choice $\varthetae=\varthetae^{\star}.$ The process of deploying \eqref{eq:auto33} to generate a sequence of increasing length starting from an input sequence is known as \emph{decoding}. Subsection~\ref{ssec:SS} describes how this decoding is started and how it is stopped.
In Subsection~\ref{ssec:detg} we consider deterministic techniques for sequence generation between the starting and stopping points, based on maximizing probabilities in \eqref{eq:auto33}.
Subsection \ref{ssec:stog} is devoted to randomized techniques for doing so, based on sampling.
And in Subsection \ref{ssec:mdstog} we illustrate how these two approaches may be combined.

\subsection{Starting and Stopping The Decoding}
\label{ssec:SS}

In general, the model in \eqref{eq:auto33} will be deployed by rolling out next-token prediction, starting 
from a given text \emph{prompt}. Examples of applications of LLMs, and the choice of prompts 
for these applications, are presented in Section~\ref{sec:applications}. Here we  simply assume that the prompt 
is given and comprises a sequence $\fp_{1:\ellp}$ for some $\ellp<L.$ We define $\FLp \subset \F^L$ by
\begin{equation} \label{eq:FLp}
    \FLp=\{f \in \F^L: f_m=\fp_m,\, m=1, \cdots, \ellp.\}
\end{equation}
The prompting sequence $\fp_{1:\ellp}$ is, in some settings, referred to as the \emph{context}. To separate the prompt from the generated text, a special token in the vocabulary known as {\tt{beginning of sequence}} ({\tt{BOS}}) is often used in element $\ellp$ of the sequence.

We may then use next-token prediction based on \eqref{eq:auto33} 
to find sequences conditioned on being in $\FLp.$ One of the tokens in the vocabulary is the {\tt{end of sequence}} ({\tt{EOS}}) token which typically arises at some step $\Ls$ satisfying $\ellp < \Ls \le L$. When this token arises, the
 LLM's response to the prompt is essentially complete. However a special {\tt{padding}} ({\tt{PAD}}) token is used
in each of the remaining entries of the sequence, in order to build a sequence of the standardized length $L.$\footnote{Often this standardized length is in fact defined as $L+\ellp$, rather than $L$, as the prompt is not viewed as part of the response.}
This padding corresponds to defining a transition probability in (\ref{eq:auto33}b) which is a Dirac measure on the {\tt{padding}} token whenever the conditioning sequence contains an {\tt{EOS}} token. 

From a probabilistic point of view, the preceding procedure corresponds to
\emph{conditioning} the next-token prediction model \eqref{eq:auto33} on the 
sequence $\fp_{1:\ellp}$. This sequence on which we condition generates the $\sigma-$algebra $\FLp.$

\subsection{Deterministic Generation} \label{ssec:detg}

The model \eqref{eq:auto33} for next-token prediction is probabilistic.
However, it may be desirable to derive from it a deterministic method
for sequence generation, given a prompt. 
One natural deterministic approach to using it is to seek the most likely 
sequence under the learned model, given the text prompt:
\begin{equation} \label{eq:globalMAP}
(f_1,\dots,f_L) \in \argmax_{f \in \FLp} \pi_{1:L}\bigl(f;\theta^\star\bigr).
\end{equation}
For a discrete state-space where each token can take one of $|\Voc|$ elements in the vocabulary, identifying the optimal sequence solving~\eqref{eq:globalMAP} can, in principle, be found by enumeration: computing the probability for all $|\Voc|^{L-\ellp}$ possible sequences and picking the one with maximum probability under the joint model. However, the computational cost of this procedure grows exponentially with $L$ and as a result it is intractable to solve~\eqref{eq:globalMAP} in practice. 

An alternative, tractable, approach is to use the autoregressive factorization of the joint probability distribution in \eqref{eq:auto33}. We solve the sequence of optimization problems
\begin{equation} \label{eq:greedy}
f_m \in \argmax_{v \in \Voc} \pi_m(v|f_{1:m-1};\theta), \qquad m = \ellp+1,\dots,L;
\end{equation}
this rolls-out the desired sequence.
Performing each optimization step requires computing the $|\Voc|$ probabilities for the elements in the vocabulary, leading to cost that
scales as $|\Voc| \times (L-\ellp)$. This procedure is known as \emph{greedy decoding}.
In general, the greedy procedure is sub-optimal with respect to the globally optimal solution in~\eqref{eq:globalMAP}. In particular, there may exist high probability paths that are hidden by greedily picking the most likely next-token and not considering a lower probability choice that results in a better overall solution when considering 
an entire sequence. We emphasize that the greedy solution does not change past elements of the sequence after they have been generated.  

\begin{example}
To ameliorate this effect, \emph{beam search} optimizes on a windowed sequence of fixed, short length $\ell$ greater than one~\cite{freitag2017beam}. To describe the method precisely we define $\pi_{m:m+\ell-1}(\cdot\,|f_{1:m-1})$ to be the probability distribution on sequence $f_{m:m+\ell-1}:=(f_m, \cdots, f_{m+\ell-1})$ of length $\ell$ implied by the autoregressive model \eqref{eq:auto}. The method
approximates the probability distribution on this sequence, and maximizes over it.
For example the basic version of beam search with $\ell=2$ chooses elements at step $m$ by solving the two-dimensional optimization problem:
$$f_{m:m+1} \in \argmax_{(v,v') \in \Voc \times \Voc} \pi_{m:m+1}(v,v'|f_{1:m-1}),  \qquad m = \ellp+1,\dots,L-1.$$
This has cost that scales as $|\Voc|^2.$ In practice the search is restricted to avoid the quadratic complexity in the
number of tokens. This is typically done by restricting the search related to $f_m$ to a subset of elements from $\Voc$ with total size $k \ll |\Voc|$, which is determined based on greedy search. This restriction reduces the complexity of the two-dimensional
optimization problem to $k |\Voc|$ leading to an overall cost that
scales as $k|\Voc| \times (L-\ellp)$ for generation of the entire
sequence. Although more expensive than the basic greedy approach, the improved quality of the decoded sequence, for example in code generation \cite{freitag2017beam}, may justify the approach in some settings. 
\end{example}

\subsection{Stochastic Generation} \label{ssec:stog}

Rather than adopting a deterministic approach to sequence generation by identifying some point of maximum probability, an alternative is to sample from $\pioL\bigl(\foL;\theta^\star\bigr)$, conditioned on the sequence being in $\FLp.$ It is possible to draw exactly from this conditional probability distribution by using next-token prediction
as follows:
$$f_m \sim \pi_m(\cdot|f_{1:m-1};\theta), \qquad m = \ellp+1,\dots,L.$$
Throughout this procedure $f_{1:\ellp}$ is fixed at the prompt $\fp_{1:\ellp},$
ensuring that the generated sequence is in $\FLp.$

\begin{remark} \label{rem:temp} Recall that a user-specified temperature parameter $\tau$ is chosen to define the learned next-token probability
model: see~\eqref{eq:next_token_model}. It is not uncommon for this parameter to be chosen differently when decoding than when 
training the model. Choosing temperature $\tau \ll 1$ when decoding increases the weight associated to the most likely token; hence the distribution over tokens becomes more concentrated. In the limit of $\tau \rightarrow 0$, the sampling procedure reduces to the greedy deterministic procedure described in the previous subsection. Choosing larger $\tau$ increases the probability of sampling rare tokens and hence the diversity of outputs from the LLM.
\end{remark}

\subsection{Mixing Deterministic and Stochastic Generation} \label{ssec:mdstog}
There are numerous ways of mixing the deterministic and stochastic approaches to decoding. A simple example is to consider sampling 
the first $L'$ tokens, in positions $\{\ellp+1,\dots,\ellp+L'\}$, and thereafter use greedy decoding:
\begin{align*}
& f_{m}  \sim \pi_{m}(\cdot|f_{1:m-1};\theta), \qquad &m = \ellp +1,\dots,\ellp+L'\\
& f_m  \in \argmax_{v \in \Voc} \pi_m(v|f_{1:m-1};\theta), \qquad &m = \ellp+L'+1,\dots,L.
\end{align*}

\begin{remark} In Subsection~\ref{ssec:qac} we introduce
chain-of-thought reasoning as a methodology to improve the quality of
responses provided by an LLM. In~\cite{wei2022chain} mixed
deterministic and stochastic generation, with $L'=1$, is used in this
context. Introducing stochasticity in the first token is found to increase 
diversity, and resulting accuracy, of the responses. 
\end{remark}

\section{Applications Of Autoregressive LLMs} \label{sec:applications}

In this section, we demonstrate how LLMs may be employed for various tasks. In order to present a consistent picture, our illustrations use the Qwen-2.5 model with 3B parameters~\cite{hui2024Qwen2}, accessed in November 2025.  Unless we state otherwise, this algorithm is deployed using the greedy decoding procedure, for sequence generation, that we described in Section~\ref{ssec:detg}. We illustrate question answering and conversation (Subsections \ref{ssec:qac} and \ref{ssec:conv}),
mathematical problem solving (Subsection \ref{ssec:icl}),  the generation of
computer code (Subsection \ref{ssec:code}), translation between different languages (Subsection \ref{ssec:trans}), text summarization (Subsection \ref{ssec:summ}) 
and multimodal data processing %
(Subsection \ref{ssec:mmode}).

In showcasing these applications of LLMs we will not only highlight the variety of
tasks for which LLMs may be used, but we will also focus on shortcomings of the
methodology; furthermore, we will describe approaches to address these shortcomings. 
Underlying these shortcomings is the inability to impose logical constraints within
an LLM; we will describe approaches such as chain-of-thought reasoning, in-context learning and fine-tuning which are used to address this lack of logical
constraint. All these approaches may be seen as attempts to increase the probability
of achieving desirable, or logically consistent, responses from the LLM, with
respect to randomness inherent in the sequence generation, or with respect to a probability measure on the prompts used to generate a response. 
In studying multi-modal processing we will also introduce encoder-decoder pairs, enabling LLMs to exploit other data modalities such
as images, audio and video\footnote{Code to reproduce all of the examples can be found at: \url{https://github.com/baptistar/LLMExamples}.}.

\subsection{Question Answering} \label{ssec:qac}

In this section we present examples of prompting the LLM with questions that, for a human, require logical reasoning to provide an answer. It is therefore of interest to study the performance of the LLM since the LLM is based entirely on a probabilistic model of next-token prediction, and has no logical constraints on its output. We start with the following example:

\begin{lstlisting}[language=json, mathescape=true, escapeinside={(*}{*)}]
"prompt":  A man named Frank travels from NYC to Philadelphia by road, on a Monday, leaving at 9:00am and arriving at 12noon. He then travels from Philadelphia to NYC the next day, Tuesday, also leaving at 9:00am and arriving at 12 noon. He takes the same route, but on Tuesday reverses the route taken on Monday. Is it necessarily the case that there is at least one point that he will be located at the same time, albeit on the two different days? Just give a Yes or No answer. 
"output": Yes<|endoftext|>
\end{lstlisting}

This answer is correct and so it might appear to a casual user that the LLM is able
to apply logical reasoning; but as we know this is not, in fact, the case. 
We probe this issue by introducing the concept of
Chain-of-Thought (CoT) reasoning. This refers to a group of techniques that are used to improve the ability of LLMs to solve complex problems by including reasoning within their response. The simplest strategy to achieve this is by appending to the prompt an instruction such as ``Explain your answer''.  We now study the output of the model, in response to the same question about Frank's journeys, but now where the model is additionally prompted to include a step-by-step explanations. Perhaps the most succinct explanation for the correct answer of ``Yes'' is to explain that the question is logically equivalent to one in which two distinct people make the journey at the same time on the same day; once this logical equivalence is noted it is clear that they  must therefore cross paths at least once. However, as the response in Appendix~\ref{ssec:aqac} shows, the LLM provides a rather verbose answer, centered around the intermediate-value theorem, when asked to provide its reasoning. \rev{Furthermore, the response contains  unjustified statements such as ``Since he [Frank] takes the same route on both days, his position as a function of time will be continuous.'' While the later is naturally true for humans, it does not follow from taking the same route on both days, but rather from the fact that humans cannot teleport. In addition, the response contains mathematical errors, such as modeling the position on both days as being the same at the final time, despite the fact that Frank ends in two distinct cities.}

\begin{remark} Despite the mixed response shown in Appendix~\ref{ssec:aqac}, there are settings in which CoT reasoning has measurable positive effect on the correctness of the final answer. The potential benefits from using the approach are highlighted in~\cite{wei2022chain}. 
\end{remark}

CoT reasoning attempts to impose logical structure, approximately, by guiding the LLM to provide \emph{output} which explains the answer to the question. In contrast, In-context Learning (ICL), described in Subsection~\ref{ssec:icl}, supplements the \emph{input} to the LLM in a manner which aims
to to impose logical structure, approximately. The word \emph{approximately}, in both cases, can be
interpreted as increasing the probability that the LLM produces the correct answer to the
question at hand; or to an increase in the probability that the LLM produces a logically consistent reasoning leading to the correct answer. This probabilistic notion of
accuracy is to be interpreted with respect to the inherent probabilistic
nature of next-token prediction 
and with respect to a probability distribution on prompts; see Section~\ref{sec:evaluation}.

\subsection{Conversations} \label{ssec:conv}

The previous section demonstrates question and answering where there is a definitive,
logically correct, answer. In this subsection, we describe question and answering in
a different context: prompts which require extraction of knowledge contained in the
data used to train the LLM. We also use this setting to illustrate conversing with an LLM, when a user requests refinement of the original question. We arrange the discussion
around Bayes' theorem:
\begin{lstlisting}[language=json, mathescape=true, escapeinside={(*}{*)}]
"prompt":  Explain Bayes' theorem simply without formulas.
"output": Bayes' theorem is a way to update our beliefs about something based on new information. It helps us figure out how likely an event is, given some evidence.<|endoftext|>
\end{lstlisting}
The answer is reasonable and can be followed up to ask for more technical detail; the output for a follow-up question that requests a formal presentation of Bayes' theorem is provided in Appendix~\ref{ssec:aconv}. The response given there is factually correct, succinct and describes some context as well as the underlying mathematics. It is notable that complex knowledge of this type is encoded in the capabilities of an LLM and can be extracted by next-token prediction.

Taken together, the example in this subsection and the linked appendix demonstrate the impressive abilities of LLMs to extract knowledge, using next-token prediction. On the other hand, the examples in Subsection~\ref{ssec:qac} and follow-up in the appendix demonstrate the potential pitfalls related to the lack of logical constraints 
on the next-token prediction methodology.

\begin{remark}
The text explaining Bayes' theorem is generated from an LLM that was trained on a static data-set from a particular point of time in the past. Since Bayes' theorem itself
does not change with time, this does not present a problem. However,
for some queries, such as planning an itinerary for a vacation, or preparation of a legal document, it can
be important to access up-to-date information from the internet. In this context,
a particularly effective use of fine-tuning (a concept introduced in Subsection \ref{ssec:emplossFT}) is a methodology known
as retrieval-augmented generation \cite{lewis2020retrieval}. Fine-tuning can, for example, be used to incorporate knowledge from Wikipedia. The version of Qwen that we consistently deploy in this paper does not have this capability; but other more recent versions do. 
\end{remark}

\subsection{Mathematics} \label{ssec:icl}

The reader who has followed our development of LLMs based on next-token prediction will be aware that there is no constraint of logical consistency imposed by the methodology. For example there is nothing to prevent an LLM implying, of even explicitly stating, as part of a response, that {\tt{2*2=5}}. The probability of it doing so might well be low; but there is no hard constraint to prevent it. In this subsection we probe the ability of LLMs to tackle tasks in mathematics. The lack of logical constraints presents as a significant barrier in this arena.

An important general concept when using LLMs is the question of how to incorporate additional prior knowledge, relevant to a specific limited set of tasks, to improve the performance of the LLM. One approach is through fine-tuning, the methodology outlined in Subsection \ref{ssec:emplossFT}, and as studied through an example in Subsection \ref{ssec:code}. A second approach is through ICL, discussed above, and which we now illustrate in detail in the context of mathematical problem solving.

ICL works by adding, to the input, text known as \emph{context} which aims to guide the LLM so that next-token prediction gives desirable output. To illustrate this idea, we consider specific multiplication problems such as $473 \times 45.$ Without context the LLM provides the incorrect answer $21385.$ With appropriately chosen context, it provides the correct answer $21285.$ In Appendix~\ref{ssec:acontext} we show how the basic LLM arrives at its incorrect answer, we show the context from~\cite{chen2023skills} that we used, and we show how the LLM, equipped with this context, arrives at the correct answer. In the same appendix, we also show the model output with stochasticity and how the inclusion of stochasticity leads to an improved probability of producing the correct answer, with and without ICL. 

\begin{remark} CoT thought reasoning can be combined with ICL, providing not only examples of response to similar prompts, but also ensuring that the examples themselves have step-by-step responses. The use of specialized sampling strategies that elicit a reasoning path, rather than generation of deterministic sequences, typically leads to answers that contain fewer factual errors or logical inconsistencies. As with our discussion at the end of Subsection \ref{ssec:qac}, such statements about improved approximation of the desired response, should be interpreted probabilistically.
\end{remark}

\subsection{Coding} \label{ssec:code}

We now illustrate the use of LLMs to generate computer code. We highlight the role of fine-tuning, introduced in Subsection \ref{ssec:emplossFT}, a methodology for introducing task-specific information to improve an LLM. In contrast to ICL, which employs additional information at the time that the LLM is deployed, fine-tuning uses additional
information in a second training phase; see~\cite{hui2024Qwen2} for the details of the coding-specific dataset consisting of over 5.5 trillion tokens that was used to fine tune the Qwen-2.5-Coder model series. We used the 3B model to generate the following examples. 

We give two examples of the LLM response in Appendix~\ref{ssec:acode}, first without fine-tuning, and then with fine-tuning. Both examples produce an ordinary differential equation (ODE) that is integrated using {\tt RK45.} This choice coincides with the method that {\sc{Matlab}} recommends as the first choice to solve non-stiff ODEs~\cite{MathWorks_ChooseODE}. However, neither code is general purpose, enabling solution of an arbitrary ODE in $\R^n$; instead, a specific example in two dimensions is given. In Appendix~\ref{ssec:acode} we also include the
graphical output produced by both codes when running the {\sc{Matlab}} code to solve the specific second order ODE chosen by next token prediction. We note that while both models produce a valid {\sc{Matlab}} script without syntax errors, the first model, which is not fine-tuned for coding, produces an example whose solution is unstable and blows up in the selected time range.

\subsection{Translation} \label{ssec:trans}

Translation has been a guiding application in the development of LLMs since the field emerged over the last decade. It can be performed by simply using a prompt that requests the translation of a provided piece of text into a specified, different, language. 
One approach to enhance a general purpose LLM for translation is to use fine-tuning, introduced in Subsection~\ref{ssec:emplossFT}. Fine-tuning can be performed by using pairs of translated sequences $\{(f_{1:L}^{(n)}, f'_{1:L'})^{(n)}\}_{n=1}^{N_{\mathsf{fine}}}$, where $f_{1:L} \sim p(f_{1:L})$ is a sequence in a given language, and $f'_{1:L'} = \mathcal{T}(f_{1:L})$ where $\mathcal{T}$ is a mapping of the sequence to a different language. We note that a given piece of text will typically be represented by different numbers of tokens in different languages; hence $L' \ne L$ in general.
However, current LLMs are so powerful that they can successfully translate, without being fine-tuned for translation; this approach is known as \emph{zero-shot} translation. The example below shows zero-shot translation of a short phrase from English into French, and back again. This example was performed using Qwen2.5-3B-Instruct model, which was fine tuned for improved instruction following, but not for language specifically. It is notable, but should not be surprising, that the map from
English to French is not inverted exactly, but only approximately.

\begin{lstlisting}[language=json, mathescape=true, escapeinside={(*}{*)}]
"prompt": Translate ``Thank you for your help with answering my math question.'' into French.
"output": ``Merci pour votre aide à répondre à ma question de mathématiques.''<|im_end|>
"prompt": Translate ``Merci pour votre aide à répondre à ma question de mathématiques'' into English
"output": ``Thank you for your help in answering my math question.''<|im_end|>
\end{lstlisting}

\subsection{Summarization} \label{ssec:summ}

LLMs are also used to summarize large corpora of documents that would be time-consuming for humans to process manually. LLMs achieve this task by identifying latent semantic representations of the corpora's content, in some cases using a hierarchical procedure to summarize intermediate summaries to mitigate the context window constraint for large corpora~\cite{guu2020retrieval, zhang2020pegasus}. The following example shows LLMs applied to summarize the introduction (Section~\ref{sec:introduction}) of this paper. We omit the introduction in the prompt below for conciseness. We note that it was provided in a LaTeX format, emphasizing the model's ability to process text in various languages and formats.

\begin{lstlisting}[language=json, mathescape=true, escapeinside={(*}{*)}]
"prompt": Summarize the following introduction section of a paper into a single sentence. 
"output": This paper aims to present a precise mathematical formulation of large language models, elucidating their core methodology and underlying principles for a mathematically inclined audience, thereby addressing their current shortcomings and offering a foundation for further computational and theoretical studies by mathematicians.<|im_end|>
\end{lstlisting}

\subsection{Multimodal Processing And Generation} \label{ssec:mmode}

Multimodal Large Language Models (MLLMs) extend the capabilities of LLMs, which
were originally developed for processing text sequences, to other data modalities;
typical examples of other data modalities include images, audio and video. MLLMs commonly leverage a pre-trained LLM, often referred to as the \emph{backbone} model, that autoregressively process and generate elements in $\F_{e}^M$, sequences of length $M$ taking values in $\R^{e}$, as presented in Sections~\ref{sec:seq} and~\ref{sec:ntp}. In this subsection, we will illustrate MLLMs by describing how to leverage a backbone model for \emph{image understanding}: the task of processing an input image and generating an output text sequence. This procedure will immediately generalize to processing other multimodal data, as well as combinations of data from different modalities. We will then describe one popular multimodal processing task known as \emph{Visual Question Answering} (VQA), where the input contains an image, together with a text prompt containing a question based on the image; and the output is the answer to that question.

\subsubsection{Image Understanding} \label{ssec:IU}

For image understanding, we assume that the image is represented as a sequence $f^{\mathsf{im}}_{1:M'}$ in $\F_{\ell'}^{M'}$: the space of sequences of length $M'$ taking values in $\R^{d_{\ell'}}$. For an example of how different data modalities are represented as a sequence, see Section~\ref{sec:encoding_other_modalities}; there we have a brief discussion of how an image is represented as a flattened sequence. To process the image, an MLLM first encodes the input sequence in $\F_{\ell'}^{M'}$ into the sequence space for the backbone LLM. This is achieved by defining the map $\mathsf{M}^{\im} \colon \F_{\ell'}^{M'} \rightarrow \F_{e}^M$. The map is commonly composed of two pieces: $\mathsf{M}^{\im} = \mathsf{P}^{\im} \circ \mathsf{E}^{\im}.$ Here $\mathsf{E}^{\im}$ is a pretrained \emph{encoder} $\mathsf{E}^{\im}: \F_{\ell'}^{M'} \to \F_{\ell}^M$ for images; typically this pretraining will have been undertaken independently of the training of the backbone LLM and, as a consequence, typically $d_\ell \ne \de.$ The encoder uses specialized architectures for images or other data types; for example a vision transformer is typically used for processing images into a sequence. The map $\mathsf{P}^{\im}: \F_{\ell}^M \to \F_{e}^M$ is based on an elementwise \emph{projector} $\mathsf{P}^{\im}: \R^{d_\ell} \to \R^{d_e}$, extended to operate on sequences of length $M$ according to the definition $(\mathsf{P}^{\im}(f^{\im}))_k := \mathsf{P}^{\im}(f_k^{\im})$ for $k=1,\dots,M$. %
The projector $\mathsf{P}^{\im}(\cdot;\varthetaa^{\im})$ is commonly taken to be a feedforward neural network with parameters $\varthetaa^\im$ that are chosen to align the range of the encoder with the input space to the LLM. These parameters are identified using a fine-tuning procedure built from the following data. 

\begin{dataassumption} \label{das:image-text}
    We collect pairs of images and text sequences drawn
    i.i.d.\thinspace from a joint distribution $\{f_{1:M'}^{\im,(n)}, f_{1:L}^{(n)}\}_{n=1}^{N_{\im}} \sim \mathsf{p}(f^{\im}_{1:M'},f_{1:L})$.
\end{dataassumption}

Using this data, we learn the parameters $\varthetaa^{\im}$ of the projector $\mathsf{P}^{\im}(\cdot;\varthetaa^{\im});$ the parameters of the backbone LLM and the encoder are fixed during this fine-tuning process. The empirical objective function minimized, using the same procedures described in Subsection~\ref{sec:minimizing_empirical}, is given by:
\begin{equation}
\label{eq:lossN_ImageText}    
\Loss^{N_{\im}}(\theta) = -\sum_{n=1}^{N_{\im}} \log\pioL \bigl(\foL^{(n)}\vert\mathsf{P}^{\im}(\mathsf{E}^{\im}(f_{1:M'}^{\im,(n)});\varthetaa^{\im});\thetas\bigr),
\end{equation}
where $\theta := \vartheta^{\im}$ and $\thetas$ denotes the LLM parameters found through pretraining.

After training, the backbone LLM may now be used to generate text in response to an image input, using the generative approaches outlined in Section~\ref{sec:seqgen}, with
the image as a prompt. Given an image, not in the training data, we embed it into Euclidean space using the encoder $\mathsf{E}^{\im}$ and learned projector $\mathsf{P}^{\im}$; the resulting Euclidean sequence may be used as the prompt for the LLM, as in Subsection~\ref{ssec:SS}. The generated output tokens can then be decoded into text using the vocabulary. 

\begin{remark}
    We note that modalities other than images can also be formatted as a Euclidean sequence, enabling generalization of the preceding methodology on image understanding
    to other tasks such as audio or video understanding. Audio is a sequence natively indexed by time; video may be time-indexed with image (and possibly sound) at each time, although other organizations of the space-time data may also be considered.
\end{remark}

This basic idea, to generate text from an image or other data modality, may
be further refined. For example, it may be used to provide reasoning or follow instructions based on semantic visual and language information. We now illustrate this idea with VQA.

\subsubsection{Visual Question and Answering}

We now build on the preceding subsubsection to describe VQA: question answering when the question refers to an input image and together the question and image form the prompt. When the input consists of more than one modality, such as a text sequence $\fp_{1:\ellp}$ and an associated image $f_{1:\ell_{\im}}^{\im}$, we process each input into the same Euclidean space. After embedding, the text and image are sequences in $\F^{\ellp}_e$ and $\F^{\ell_{\im}}_e$, respectively for $\ellp + \ell_{\im} < L$. By concatenation, we can define the prompting sequence to be:
\begin{equation*}
    \FLp=\{f \in \F^L: f_m=\fp_m,\, m=1, \cdots, \ellp, f_m=f^{\im}_m,\, m=\ellp+1, \cdots,\ellp+\ell_{\im}\}.
\end{equation*}
We can then generate an output text sequence of maximum context length $L$ given the prompt. We now provide an example using the Qwen-3 model for VQA. The following shows an example where the LLM is asked to describe an image from a simulation to the Navier-Stokes equation; see~\cite{wan2023debias} for details on the image. The output text correctly identifies the vortical structures present in the flow and suggests that it might be (decaying) turbulence created by flow past an obstacle or in a channel. These are reasonable suggestions, consistent with the image. 

\begin{lstlisting}[language=json, mathescape=true, escapeinside={(*}{*)}]
"prompt": Describe this image representing a solution to Navier Stokes equation.
\end{lstlisting}
\begin{figure}[!ht]
    \centering
    \includegraphics[width=0.25\linewidth]{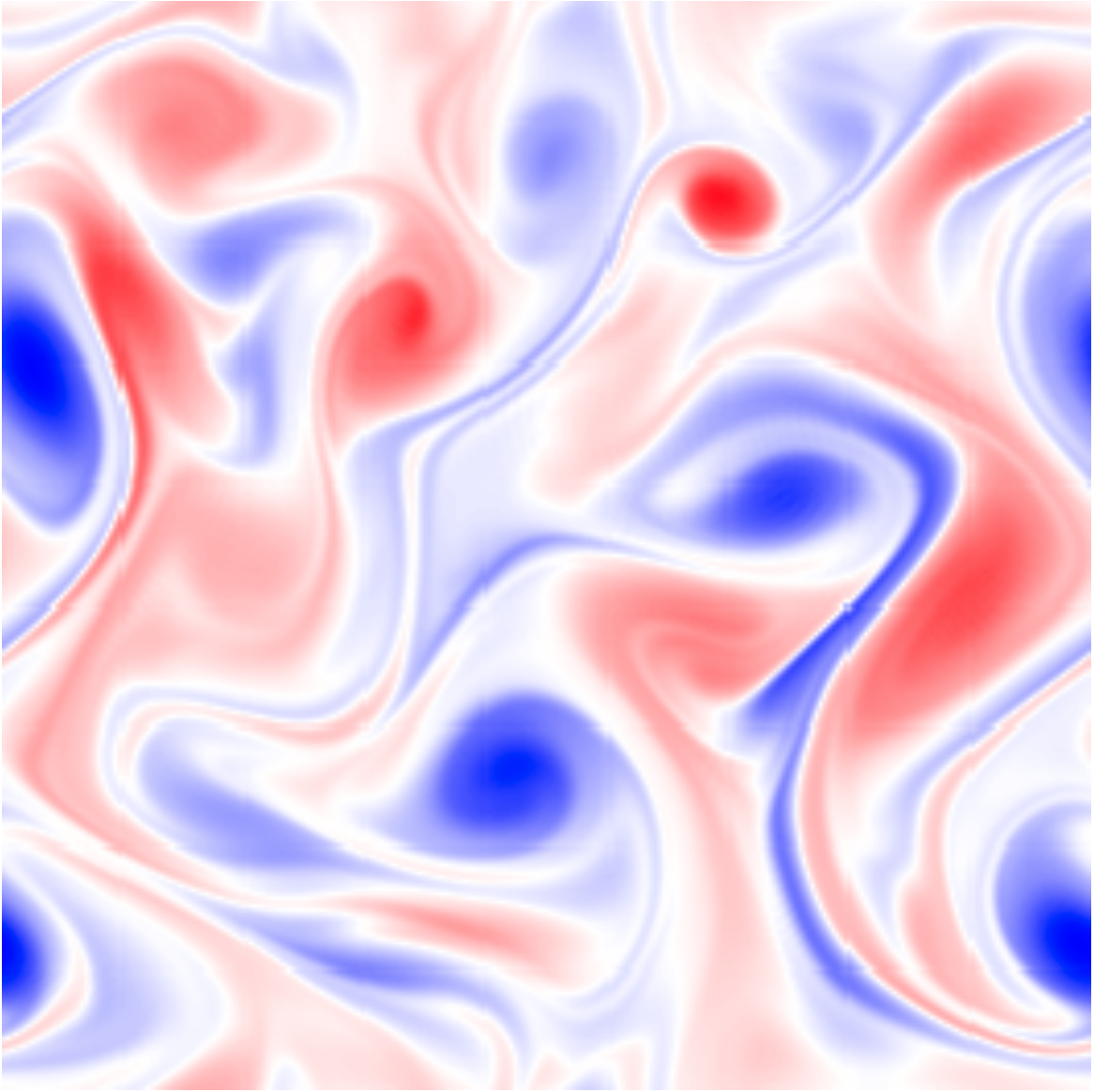}
    \label{fig:placeholder}
\end{figure}
\begin{lstlisting}[language=json, mathescape=true, escapeinside={(*}{*)}]
"output": This image is a visualization of a solution to the Navier-Stokes equations, likely representing a two-dimensional turbulent or transitional flow field. While the exact physical context (e.g., flow past an obstacle, channel flow, or decaying turbulence) isn't specified, the visual features strongly suggest it depicts vortical structures and coherent eddies in a fluid.
\end{lstlisting}

\begin{remark} A MLLM can also be used to generate data from modalities other than text. We illustrate this by explaining how the generated data may itself be an image.
(Generalizations to producing data from other modalities are similar). An example of a problem where this is needed is the use of an LLM to produce an edited image, given a starting image and a description of the features to be changed. Generating image data rather than text data requires a mapping from Euclidean sequences representing text to Euclidean sequences representing images. Recall that in Subsubsection~\ref{ssec:IU} we have already introduced a map in the converse direction: $\mathsf{M}^{\im} \colon \F_{\ell'}^{M'} \rightarrow \F_{e}^M$. Here we introduce 
$\D^{\im} \colon \F_e^M \rightarrow \F_\ell^{M'}$, an approximate inverse. %
We can use $\D^{\im}$ to map a (Euclidean representation of a) text sequence to a flattened representation of an image as a Euclidean sequence. 

Although we have presented the inverse map as working on Euclidean sequences, for
simplicity of exposition, the map $\D^{\im}$ is typically built from a decoder which works over a discrete vocabulary. This involves creating an (approximate) 
representation of any image data sample as a sequence over a finite vocabulary, often referred to as a \emph{codebook}. Note that a finite vocabulary is not used to process image data into text, as described in Subsubsection~\ref{ssec:IU}; it is just needed for generation. Approaches to learn the vocabulary and the decoders include DC-GAN and VQ-VAE for images~\cite{radford2015unsupervised, razavi2019generating}. The decoder architectures for different data modalities follow a similar structure to the encoder architectures described in Subsubsection~\ref{ssec:IU} that compose a pre-trained map with fine-tuned projector. Fine-tuning the encoder and decoder parameters requires datasets of paired samples where each sample contains related representations of a common underlying object in every data modality, as in Data Assumption~\ref{das:image-text} for image and text data. This approach has been used to build models that can simultaneously understand and/or generate up to 21 different data modalities~\cite{bachmann20244m}.
\end{remark}

\section{Evaluation of LLMs} \label{sec:evaluation}

The preceding sections have shown how LLMs work using probabilistic next token prediction, highlighted their remarkable capabilities across numerous applications of the methodology, and demonstrated shortcomings, in particular in relation to their inability to impose logical structure as hard constraints. Given both the potential
wide-ranging uses of LLMs, together with their demonstrable shortcomings, evaluation
of them is clearly of importance. This section is devoted to how the question of
evaluation may be confronted. In Subsection \ref{ssec:evalM} we give an abstract
mathematical formulation of evaluation, followed in Subsection \ref{ssec:evalB} by
citation to papers conducting benchmarking in practice. 

\subsection{Mathematical Formulation} \label{ssec:evalM}

Recall from \eqref{eq:auto} the true probability distribution defined, in principle, by all available text:
\begin{equation}
    \label{eq:autoZ}
\poL(\foL)=\Pi_{\ell=1}^L \pl(f_\ell|f_{1:\ell-1}).
\end{equation}
From \eqref{eq:auto3_shared} and \eqref{eq:qarc} we have our approximate probability distribution on text, taking the form:
\begin{equation}
    \label{eq:auto3Z}
\pioL(\foL;\theta)=\Pi_{\ell=1}^L \ql\bigl(f_l|\phi \circ f_{1:l-1};\varthetaa\bigr).
\end{equation}
After application of a specific choice of one of the training procedures detailed in Section \ref{sec:train}, a choice $\thetas$ is made for the parameter vector defining the LLM
\eqref{eq:auto3Z}. This choice is aimed at aligning the LLM with the true probability distribution 
\eqref{eq:autoZ}. We can now ask for  precise measures of the performance of the LLM on
specific tasks.

Recall $\FLp$ from equation \eqref{eq:FLp},
representing the $\sigma-$algebra on sequences in $\F^L$ created by a fixed text prompt $\fp_{1:\ellp}$. This text prompt may contain in-context information; see Section~\ref{ssec:icl}. Given a specific text prompt defining $\FLp$ we might be interested in the difference between the conditional probabilities $\poL(\foL|\FLp)$ and $\pioL(\foL;\theta|\FLp).$ Using a divergence $\dvg: \prob(\F^L) \times \prob(\F^L) \to \R^+$, we define a measure of the effectiveness of the LLM, for this specific text prompt, to be
\begin{equation} \label{eq:LLMeffectiveness0}
\E(\fp_{1:\ellp}) := \dvg\Bigl(\poL(\foL|\FLp)|| \pioL(\foL;\thetas|\FLp)\Bigr).
\end{equation}
This divergence quantifies the difference between the ``true'' distribution of responses (constrained to have total length less than $L$) given a specific prompt,
and the distribution of the LLM, given the same prompt. Averaging over text
prompts from a given distribution $\pip$, noting that such a distribution will typically
be over prompts of different lengths, gives a measure of the average effectiveness of a trained LLM over a class of prompts:
\begin{equation} \label{eq:LLMeffectiveness}
\mathsf{e}=\mathbb{E}_{\fp_{1:\ellp} \sim \pip} \E(\fp_{1:\ellp}).
\end{equation}
The choice of distribution on text prompts defines
a class of problems of interest.
The choice of $\dvg$ is crucial in making evaluation of the
effectiveness feasible in practice.

\begin{example} A natural choice for $\dvg$ in \eqref{eq:LLMeffectiveness0} 
is the KL divergence~\eqref{eq:KLdivergence} between the approximate model distribution and the data distribution; this leads to the same objective used to train the LLM in Section~\ref{sec:train}, constrained by the specific text prompt. Making this choice and evaluating the average effectiveness $\mathsf{e}$ given in~\eqref{eq:LLMeffectiveness} leads to 
$$\mathsf{e} = \mathsf{const} - \mathbb{E}_{\fp_{1:\ellp} \sim \pip} \mathbb{E}_{\foL \sim \poL(\cdot|\FLp)} \log \pioL(\foL;\thetas|\FLp);$$
here $\mathsf{const}$ is a scalar that is independent of the model parameters $\thetas$. Thus, the following second term can be used to compare the effectiveness of two different parameter values:
$$-\mathbb{E}_{\fp_{1:\ellp} \sim \pip} \mathbb{E}_{\foL \sim \poL(\cdot|\FLp)} \log \pioL(\foL;\thetas|\FLp).$$
In particular, a lower value for the preceding expressions indicates that the approximate model is closer in KL divergence to the data distribution. An empirical estimate of this expression, given i.i.d.\thinspace samples of prompts and responses drawn from the data distribution, is referred to as the negative log-likelihood or cross-entropy of the model. The exponentiated negative log-likelihood of a single sequence is often referred to as~\emph{perplexity}.
\end{example}

\begin{example} 
Another potentially useful choice of $\dvg$ is the energy distance or maximum mean discrepancy (MMD). This is because both can be evaluated using only samples; see \citep[Chapter 11]{bach2024inverse}. Evaluating the effectiveness $\E(\fp_{1:\ellp})$
using the energy distance or MMD, for a given prompt, requires generating samples from the approximate model and collecting samples from the data distribution which are consistent with the prompt. In practice, the samples may not have the same length so it may be advantageous to project both samples into a common space. For example, we may ask the LLM for a ``Yes/No'' response and process each sample from the data distribution in a binary space prior to computing the divergence.
\end{example}

For logical problems with a single, correct response, we can compare the modeled distribution with a specific answer. This can be achieved using scoring-rules $\mathsf{S}: \prob(\F^L) \times \F^L \rightarrow \R$ which compare a distribution to a point. Examples of scoring rules include continuous ranked probability score (CRPS) or the energy score\footnote{In one dimension, for example, the CRPS computes the square of the $L^2$ norm of the difference between the cumulative distribution function (CDF) of the distribution and the CDF of a Dirac mass at the point.}. The expectation of the scoring rule over a distribution on the point can be used to define a divergence. See~\citep[Chapter 11]{bach2024inverse} for the definitions and details of these rules. When applying LLMs to problems with a single correct response, this will be the point that enters the scoring rule; the distribution will be that of the LLM.

For evaluation of logical problems with a single correct response, another
approach is to summarize the modeled probabilities with a single point. This, for
example, might be the answer implied by the output of the greedy decoding algorithm 
defined by~\eqref{eq:greedy}; and then this point is compared directly with the correct response in some metric or, if the answer is numerical, a norm. When the model is used to retrieve a set of information, its output can also be evaluated by counting the number of true positives, false positives, and false negatives in the output relative to a ground-truth set. These counts are often used to report evaluation metrics such as \emph{precision} and \emph{recall}, the proportion of true positive instances identified by the LLM relative to the total positive instances (true and false positives) and actually correct instances (true positives and false negatives), respectively. Intuitively, precision measures the model's correctness when it makes a claim, while recall measures how much of the desired truth is actually output by the model.

\subsection{Benchmarking and Evaluation Metrics} \label{ssec:evalB}

Recently, several datasets have been proposed to test the multitask accuracy of
a model, across a range of academic and professional domains including mathematics, computer science, history and law~\cite{hendrycks2020measuring, srivastava2023beyond}. These benchmarks test the ability of the model to retrieve knowledge and answer reasoning tasks. Benchmarks for the chain-of-thought methodology, discussed in Subsection \ref{ssec:qac}, which evaluate reasoning, are introduced in \cite{suzgun2023challenging, jiang2025mme} and reviewed in  surveys~\cite{DBLP:journals/corr/abs-2502-12289, mondorf2024beyond}.

Translation tasks, introduced in Subsection~\ref{ssec:trans}, are difficult
to benchmark, particularly for longer texts; availability of sufficient data for translation across multiple languages is a bottleneck in this area. We
refer the reader to the discussion in~\cite{chauhan2023comprehensive}.
The paper~\cite{nllb2024scaling} introduces the idea of using
transfer learning to enable translation between several hundred different languages,
and thereby enrich the available data sets.
Summarization, the subject of Subsection~\ref{ssec:summ}, is studied, for news articles, in the recent paper~\cite{zhang2024benchmarking}; as for translation tasks, it is in general difficult to obtain large data sets on which to evaluate the performance of a summarization algorithm.

Recently, there has been increasing use of LLMs to evaluate other LLMs, as well as to generate text sequences for training models. This self-referential approach has clear potential drawbacks; the paper~\cite{alemohammad2024self} illustrates these drawbacks directly in the context of using AI-generated image-data to train image-generation models.

\section{Diffusion-Based Models} \label{sec:mask}

LLMs based on the autoregressive factorization~\eqref{eq:auto3_shared} currently predominate the landscape of models. However, it is entirely possible that other approaches may eventually supersede this approach. Language itself can have long-range correlations that are not necessarily well-represented by the autoregressive factorization which
imposes a fixed ordering for text generation. In the setting of image generation, \emph{diffusion-based generative modeling} has proven remarkably successful. A recently adopted line of inquiry asks how this methodology might be repurposed to generate text.

Diffusion-based generative modeling uses a forward stochastic process, defined on Euclidean space, to add noise to a data set until it is dominated
by noise; reverse processes~\cite{anderson1982reverse} may be identified which map this noise back to the data set. By learning a regularized approximation of this reverse process, a methodology may be constructed to (approximately) generate new images from the distribution underlying the data set, by initializing the reverse process from noise \cite{song2020score}. Choosing the forward process to be a Gaussian diffusion process (stochastic differential equation) enables efficient learning of the reverse process.

In the context of language, an alternative to sequential generation of each element of a text sequence is to predict the entire sequence (or part of a sequence) using an
analog of diffusion-based generative modeling. This approach may be formulated by introducing a forward process that corrupts the entire sequence by progressively (and randomly) masking parts of it; as for diffusions, reverse processes exist which take a masked sequence back to the data distribution. Learning a regularized version of this reverse process leads to a methodology that produces an unmasked sequence drawn (approximately) from the data distribution of text sequences.  This procedure is known as a \emph{discrete diffusion model}, recognizing the
fact that text sequences contain elements in a finite vocabulary $\Voc$. The primary advantage of diffusion models over autoregressive models is that they generate an entire sequence at once, allowing for more efficient and sophisticated representation of causality relationships in the text sequence. The trade-off is that the reverse process requires multiple iterative steps for inference, which can have a potentially greater cost.

Recall that $\F^L$ contains sequences of length $L$ taking values in $\Voc.$
For such a sequence, the total number of possible states is $|\Voc|^L$. We now define a forward process
that yields a time-indexed  
probability distribution $\mathsf{p}(\cdot,t) \in \mathcal{P}(\F^L)$ for $t \geq 0$. This process is initialized at the data distribution in Assumption~\ref{das:sequences} at $t =0$, i.e., $\mathsf{p}(\cdot,0) = \mathsf{p}_{1:L}$. The sequence of distributions then evolves according to the following  continuous-time Markov chain (CTMC) \cite{norris1998markov}:
\begin{equation} \label{eq:CTMC}
\frac{\partial \mathsf{p}}{\partial t} = Q\mathsf{p}, \quad \mathsf{p}(\cdot,0) =  \mathsf{p}_{1:L}.
\end{equation}
The matrix-valued function of time $Q(t) \in \R^{|\Voc|^L \times |\Voc|^L}$ is known as the transition rate matrix. The matrix entries satisfy $\sum_{j} Q(t)_{ij} = 0$ for $i = 1,\dots,|\Voc|^L$~\cite{norris1998markov}; this ensures that solutions $\mathsf{p}(\cdot,t)$ defined by \eqref{eq:CTMC} indeed remain in $\mathcal{P}(\F^L)$. The solution to system~\eqref{eq:CTMC} defines the marginal probability of a time-indexed sequence $f_{1:L}(\cdot):[0,T] \to \F^L$ being in each state in $\F^L$ at time $t \ge 0$, starting from an initial condition $f_{1:L}(0) \sim \mathsf{p}_{1:L}$. 

\begin{remark} Recall from Section~\ref{ssec:bpe} that we chose to simply identify
each element in $\Voc$ with an integer in $\{1, \cdots, |\Voc|\}.$ Thus we may identify the space of sequences of length $L$ to be $\{1,\dots,|\Voc|^L\}$.
Recalling that a typical byte-pair encoded vocabulary has size $\mathcal{O}(10^4)$
(Example \ref{ex:22})
and that $L=\mathcal{O}(10^4)$ for state of the art LLMs (Example \ref{ex:31}) we see that the state-space of this Markov chain can be enormous. Hence, it is necessary to impose a sparse structure on $Q(\cdot)$ to define tractable numerical algorithms. 
\end{remark}

In practice, it is common to choose $Q(t) = \sigma^2(t) Q^0$ for some diffusion coefficient $\sigma(t)$ and some fixed $Q^0$. That is, 
\begin{equation} \label{eq:CTMC2}
\frac{\partial \mathsf{p}}{\partial t} = \sigma^2(t) Q^0 \mathsf{p}.
\end{equation}
If we define function $g(\cdot)$ by $g(t)=\int_0^t \sigma^2(s)ds$ and new time $\tau=g(t)$ then in terms of new time variable $\tau$
\eqref{eq:CTMC2} becomes, for $\mathsf{p}'(\tau)$ defined by $\mathsf{p}'\bigl(g(t)\bigr)=\mathsf{p}(t)$,
\begin{equation} \label{eq:CTMC3}
\frac{\partial \mathsf{p}'}{\partial \tau} =  Q^0 \mathsf{p}'.
\end{equation}
If we assume that this process is ergodic so that there is a unique reference distribution $\mathsf{p}_{\textrm{ref}} \in \mathcal{P}(\F^L)$ with property 
$\mathsf{p}'(\tau) \to \mathsf{p}_{\textrm{ref}}$ as $\tau \to \infty$, and that $\sigma(\cdot)$ is such that $g \to \infty$ as $t \to \infty$,
then $\mathsf{p}(t) \to \mathsf{p}_{\textrm{ref}}$ as $t \to \infty.$
The following two examples present common choices for $Q^0$ in this setting. The two choices result, respectively, in a uniform reference distribution over $\F^L$, or a reference distribution concentrated on a specific sequence in $\F^L$ in which each element takes value given by the same specific token (often referred to as a \emph{mask}. In both cases we assume that $Q(t) = \sigma^2(t) Q^0$ for some diffusion coefficient $\sigma(t)$ and some fixed $Q^0$ chosen to obtain the desired reference distribution $\mathsf{p}_{\textrm{ref}}.$

\begin{example}[Uniform distribution] Consider the rate-transition matrix $Q^0$ defined by
$$Q^0_{ij} = \begin{cases} 1 - \frac{|\Voc| - 1}{|\Voc|}\beta & \text{if } i=j \\ \frac{\beta}{|\Voc|} & \text{if } i \neq j\end{cases}.$$
The matrix $Q^0$ has any constant vector in its null-space, as required.  
Thus, the process in~\eqref{eq:CTMC} is ergodic \cite{norris1998markov} and converges to the uniform distribution $(\mathsf{p}_{\textrm{ref}})_{k} = \frac{1}{|\Voc|}$ for $k = 1,\dots,|\Voc|$
as $t \to \infty.$ By choosing large $T$ we obtain $\mathsf{p}(\cdot;T) \approx $$\mathsf{p}_{\textrm{ref}}.$
\end{example}

\begin{example}[Absorbing state] Let $m$ represent the index of an absorbing state and consider the rate-transition matrix
$$Q^0_{ij} = \begin{cases} 1 & \text{if } j=i=m \\ \beta & \text{if } j = m, i \neq m \\ 1 - \beta & \text{if } j \neq m, i \neq m \\ 0 & \text{otherwise} \end{cases} $$
As in the previous example, the matrix $Q^0$ has any constant vector in its null-space. And again
the process in~\eqref{eq:CTMC} is ergodic \cite{norris1998markov} and converges to the Dirac distribution $\mathsf{p}_{\textrm{ref}} = \delta_{m}$ as $t \to \infty.$
\end{example}

The state-space picture of the CTMC in~\eqref{eq:CTMC} is a Markov process for the sequence $(f_{1:L}(t))_{t \geq 0}$ in $\F^L$ that evolves continuously in time with random holding times. As a result, the process is a piecewise-constant trajectory that transitions between two states at jump times. The time between jumps is a random variable, known as the holding time, which has an exponential distribution with rate parameter $Q_{ii}(t)$~\cite{norris1998markov}. This provides the basis of numerical methods for simulating CTMCs, such as the step-by-step Gillespie algorithm \cite{gillespie1977exact} (that simulates each transition) and the $\tau$-leaping algorithm \cite{cao2006efficient} (that applies all transitions in a time interval at once); see~\cite{campbell2022continuous} for application of these ideas in the context of discrete state-space diffusion models.

Given the forward process, we consider a reverse process of the form
\begin{equation} \label{eq:reverseCTMC}
\frac{\partial \mathsf{\bar{p}}}{\partial t} = \overline{Q}\mathsf{\bar{p}}, \quad \mathsf{\bar{p}}(\cdot,T) = \mathsf{p}(\cdot,T),
\end{equation}
evolving backwards in time from $t = T$ to $t = 0$. Here 
$\overline{Q}_{ij}(t) = Q_{ji}(t)\frac{\mathsf{p}(j,t)}{\mathsf{p}(i,t)}$
is the reverse rate matrix. Then, the
choice $\mathsf{\bar{p}}(\cdot,t) = \mathsf{p}(\cdot,t)$ for all $t \in [0,T]$ solves~\eqref{eq:reverseCTMC}; see the proof of this result in Appendix~\ref{app:discrete_reversibility}. In particular $\mathsf{\bar{p}}(\cdot,0) = \mathsf{p}(\cdot,0)=\mathsf{p}_{1:L}.$ As for the forward process, we can simulate a sequence $(\bar{f}_{1:L}(t))_{t \in [0,T]}$ in $\F^L$ that satisfies the reverse process.
The goal of discrete diffusions is to learn the reverse rate-transition matrix $\overline{Q}$ and simulate the reverse process to sample from the data distribution by
sampling $\mathsf{\bar{p}}(\cdot,T).$ In practice $\overline{Q}$ is known only approximately and the reverse process is initiated at $\mathsf{\bar{p}}(\cdot,T)=\mathsf{p}_{\textrm{ref}}$, which approximates $\mathsf{p}(\cdot,T).$ Hence the samples are only approximate samples from the desired distribution $\mathsf{p}_{1:L}.$ 

To define the discrete reverse process, we may learn a model for the rate transition matrix $\overline{Q}$ using a discrete-analog of \emph{score matching}, the methodology
underlying diffusion-based image generation \cite{song2020score}. We also refer the reader to~\cite{campbell2022continuous}, which introduces a learning objective in discrete-time based on lower bounds for the log-likelihood of the data, and~\cite{campbell2024generative} for formulations based on flow matching. In~\cite{lou2024discrete}, the authors propose to learn a network $s(\cdot,\cdot;\theta)\colon \F^L \times [0,T] \rightarrow |\Voc|^L$ that approximates the probability ratios of the marginal probabilities appearing in the reverse rate matrix. That is, $\mathsf{s}(i,t;\theta)_{j} \approx \frac{\mathsf{p}(j,t)}{\mathsf{p}(i,t)}$ for all pairs of inputs $i,j \in \F^L$. To identify $\mathsf{s}$, we may minimize the loss function
$$\mathsf{L}(\theta) := \int_0^T \mathbb{E}_{i \sim \mathsf{p}(\cdot,t)}\left[\sum_{j \neq i} \mathsf{s}(i,t;\theta)_{j} - \frac{\mathsf{p}(j,t)}{\mathsf{p}(i,t)} \log \mathsf{s}(i,t;\theta)_{j}\right] \textrm{d}t.$$
Given that $s \mapsto s - \alpha \log s$ is minimized at $s = \alpha$ we have that $\mathsf{L}$ is minimized at the desired probability ratio. Moreover, by the tower rule, the loss can be equivalently written in terms of the conditional distributions, which are specified by the marginal forward process in~\eqref{eq:CTMC} and hence known. That is,
$$\mathsf{L}(\theta) = \int_0^T \mathbb{E}_{i \sim \mathsf{p}(\cdot,t|k), k \sim \mathsf{p}_{1:L}}\left[\sum_{j \neq i} \mathsf{s}(i,t;\theta)_{j} - \frac{\mathsf{p}(j,t|k)}{\mathsf{p}(i,t|k)} \log \mathsf{s}(i,t;\theta)_{j}\right] \textrm{d}t.$$
This loss is implementable given samples from the data distribution and yields an estimator for the reverse rate matrix, which can be used to generate new samples by evolving the reverse process in~\eqref{eq:reverseCTMC} with this approximate rate. That is, by evolving $\overline{f}(t)$ backwards-in-time starting from $\overline{f}(T) \sim \mathsf{p}_{\textrm{ref}}$ to $t = 0$ according to the CTMC with the rate matrix whose entries are given by 
\begin{equation} \label{eq:reverseCTMC2}
\overline{Q}_{ij}(t) = Q_{ji}(t)\mathsf{s}(i,t;\theta)_{j},
\end{equation}
for all $i,j$. The resulting sequence $\overline{f}(0)$ is an approximate sample from $\mathsf{p}_{1:L}$.

\section{Summary and Challenges}
\label{sec:SAC}

\rev{
\paragraph{Beyond Autoregressive and Transformer-Based Methodologies} As explained
at the end of Subsection \ref{ssec:GOP}, our focus in this paper is on
autoregressive and transformer-based LLMs because they have dominated the landscape
over the past half decade, and continue to do so. However, as also highlighted at the end of Subsection~\ref{ssec:GOP}, and in the literature review
Subsection~\ref{ssec:literature}, different approaches are showing promise, 
and the field is rapidly evolving. Further
mathematization of diffusion-based models, in place of autoregressive
models, and of state-space models or hybrid models, in place of transformer-based
models, is very much of interest and presents opportunities for the applied
mathematics community.
}

\paragraph{Text Encoding in Euclidean Space:} In Remark \ref{rem:byte}
and Example \ref{ex:31}
we highlight the trade-off between vocabulary size and sequence length.
The question of how to optimally balance this trade-off will
depend on the details of the LLM architecture. Current algorithms
simply demonstrate empirical success with the use of byte-pair encoding.
Analysis of this trade-off has the potential to guide new architectural 
developments, linked to concepts from information theory.
Relatedly, the question of how to embed sequences in $\Voc$ as sequences
in $\Voc_e$ leaves open many natural questions, alluded to in
Remark \ref{rem:close}: what is the optimal
choice of dimension $\de$? Do LLMs automatically learn an embedding for text sequences that has semantic meaning and how does regularity in the embedding space affect the resulting model learning problems?

\paragraph{Long Context Length:} As mentioned in Subsection~\ref{ssec:icl}, prompts which contain substantial information (context) are used to avoid fine-tuning of LLMs and adapt the model to specific tasks. However, the computational complexity of next-token prediction models for long sequence generation creates a bottleneck in this approach. This has motivated new methodologies that can compress long sequences. One such methodology replaces commonplace fragments of sequences using GIST tokens~\cite{mu2023learning, petrov2025long}. Another constraint in most practical LLMs is the maximum context length. For example, the diffusion processes in Section~\ref{sec:mask} set this length in advance and sample sequences of fixed length. It is of interest to develop  approaches that can operate across different dimensional spaces, and which can adjust the maximum context length based on each task.

\paragraph{Robustness:} LLMs have been shown to result in wide variety of responses under adversarial perturbations to their inputs~\cite{xu2023llm, zhu2023promptrobust}; this has potential to affect their safe development and public trust in the technology. In particular,
quite reasonable text prompts can lead to \emph{hallucinations}: generated content which is factually incorrect or nonsensical. An important 
direction for study is to investigate the sensitivity of the next-token probability model, and the resulting semantic meaning of the generated output, to various perturbations in the model. These might include the input context, model parameters and fine-tuning. This will be useful for designing architectures whose output is more robust to certain perturbations. \rev{The natural mathematical setting for this
question is Section \ref{sec:seqgen}. We can formulate questions of robustness
in terms of the continuity, or Lipschitz continuity, of maps taking inputs to
the LLM (such as parameter $\theta$, prompt, context) into outputs; both
deterministic (Subsection~\ref{ssec:detg}) and probabilistic (Subsection~\ref{ssec:stog}) outputs can be considered, by using appropriate metrics on the vocabulary space or distribution space, respectively.}

\paragraph{Optimal Use Of Data:} Remark \ref{rem:4p7} discusses the fact that typically the desired  population level objective function is approximated
in a biased way, using correlated data samples.
A statistical evaluation of this finite-sample estimator, in comparison with unbiased ample-based approximation, 
is an interesting research direction. More generally, there are several interesting questions about the trade-off between accuracy and
computational complexity in the use of data. Another example is in the decomposition of data to enable it to fit into the
context window; what is the optimal way to break up and use the data?

\paragraph{Scaling Laws:} Recent empirical studies \rev{suggest} that the accuracy of LLMs (as measured using various metrics, see Section~\ref{sec:evaluation}) \rev{tends to} follow seemingly deterministic power-law relationships with increasing model parameters, the volume of training data, and the amount of computational resources (FLOPs)~\cite{kaplan2020scaling, hoffmann2022training}. These trends are often used to inform economic decisions by predicting the performance of models under different training configurations; for example to assess whether training a larger model on more data is cost-effective. However, scaling laws fit simple functional forms to historical training data and assume that power-laws extrapolate to new model settings. Developing mechanistic models that explain these trends by accounting for different architectures, variations in dataset quality, and changing parameters in optimization algorithms with quantified uncertainty will underpin these empirical studies, and yield scaling laws that extrapolate more reliably. 

\paragraph{Synthetic Data and Watermarking:} With the increasing amount of human-generated data being used for training (see Remark~\ref{rem:training_data}), there is growing possibility that models will be trained on synthetic data that has been generated by another model. This behavior can lead to a degradation in the quality of the generated samples and a bias relative to the true data distribution, a phenomenon referred to as \emph{mode collapse}~\cite{shumailov2024ai, alemohammad2024self}. To avoid this, \emph{watermarking} is  used, an approach to embed a signal in the generated text that can be used to detect LLM-generated data~\cite{kirchenbauer2023watermark}. Such data can be excluded from fine-tuning, or can be weighted differently from naturally occurring data; research is needed to understand how best to do this.

\paragraph{Test-Time Reasoning:} In Subsection \ref{ssec:qac} we discussed the Chain-of-Thought approach to making reasoning explicit via prompting. A related emerging %
approach is \emph{test-time reasoning} (also known as test-time scaling or thinking). This refers to devoting appreciable computing resources at inference-time to improve the correctness of the model's predictions on hard tasks. 
In several modern deployments, this is achieved through interaction of
the model with the user by generating  intermediate tokens (sometimes hidden, sometimes shown) before committing to an answer. The value of the trade-off between latency/cost of this procedure
and its reliability will depend on the task. Algorithmic approaches for test-time reasoning include eliciting longer chains-of-thought \cite{wei2022chain}, sampling multiple reasoning paths and selecting a subset of them for consistency \cite{wang2022self}, and more explicit search over thought states, for example via Tree/Forest-of-Thought methods~\cite{yao2023tree}. Mathematically, a natural perspective is to treat thoughts as latent variables. Test-time reasoning may then be viewed as a form of budget-constrained inference with the goal of maximizing an evaluation metric; see Section~\ref{sec:evaluation}. 
For example, we can optimize over trajectories to shift the model's output distribution toward more globally consistent responses. Deeper understanding of this process has the potential to create
more effective algorithms.

\section*{Acknowledgments}
The work of AMS is partially supported by a Department of Defense (DoD) Vannevar Bush Faculty Fellowship (award N00014-22-1-2790). 

\bibliographystyle{imsart-number}
\bibliography{references}  

\begin{thebibliography}{128}

\bibitem{abdin2024phi}
\begin{barticle}[author]
\bauthor{\bsnm{Abdin},~\bfnm{Marah}\binits{M.}},
  \bauthor{\bsnm{Aneja},~\bfnm{Jyoti}\binits{J.}},
  \bauthor{\bsnm{Behl},~\bfnm{Harkirat}\binits{H.}},
  \bauthor{\bsnm{Bubeck},~\bfnm{S{\'e}bastien}\binits{S.}},
  \bauthor{\bsnm{Eldan},~\bfnm{Ronen}\binits{R.}},
  \bauthor{\bsnm{Gunasekar},~\bfnm{Suriya}\binits{S.}},
  \bauthor{\bsnm{Harrison},~\bfnm{Michael}\binits{M.}},
  \bauthor{\bsnm{Hewett},~\bfnm{Russell~J}\binits{R.~J.}},
  \bauthor{\bsnm{Javaheripi},~\bfnm{Mojan}\binits{M.}},
  \bauthor{\bsnm{Kauffmann},~\bfnm{Piero}\binits{P.}} \betal{et~al.}
(\byear{2024}).
\btitle{Phi-4 technical report}.
\bjournal{arXiv preprint arXiv:2412.08905}.
\end{barticle}
\endbibitem

\bibitem{achiam2023gpt}
\begin{barticle}[author]
\bauthor{\bsnm{Achiam},~\bfnm{Josh}\binits{J.}},
  \bauthor{\bsnm{Adler},~\bfnm{Steven}\binits{S.}},
  \bauthor{\bsnm{Agarwal},~\bfnm{Sandhini}\binits{S.}},
  \bauthor{\bsnm{Ahmad},~\bfnm{Lama}\binits{L.}},
  \bauthor{\bsnm{Akkaya},~\bfnm{Ilge}\binits{I.}},
  \bauthor{\bsnm{Aleman},~\bfnm{Florencia~Leoni}\binits{F.~L.}},
  \bauthor{\bsnm{Almeida},~\bfnm{Diogo}\binits{D.}},
  \bauthor{\bsnm{Altenschmidt},~\bfnm{Janko}\binits{J.}},
  \bauthor{\bsnm{Altman},~\bfnm{Sam}\binits{S.}},
  \bauthor{\bsnm{Anadkat},~\bfnm{Shyamal}\binits{S.}} \betal{et~al.}
(\byear{2023}).
\btitle{{GPT}-4 technical report}.
\bjournal{arXiv preprint arXiv:2303.08774}.
\end{barticle}
\endbibitem

\bibitem{adam2014method}
\begin{barticle}[author]
\bauthor{\bsnm{Adam},~\bfnm{Kingma DP Ba~J}\binits{K.~D. B.~J.}} \betal{et~al.}
(\byear{2014}).
\btitle{A method for stochastic optimization}.
\bjournal{arXiv preprint arXiv:1412.6980}
\bvolume{1412}.
\end{barticle}
\endbibitem

\bibitem{alemohammad2024self}
\begin{binproceedings}[author]
\bauthor{\bsnm{Alemohammad},~\bfnm{Sina}\binits{S.}},
  \bauthor{\bsnm{Casco-Rodriguez},~\bfnm{Josue}\binits{J.}},
  \bauthor{\bsnm{Luzi},~\bfnm{Lorenzo}\binits{L.}},
  \bauthor{\bsnm{Humayun},~\bfnm{Ahmed~Imtiaz}\binits{A.~I.}},
  \bauthor{\bsnm{Babaei},~\bfnm{Hossein}\binits{H.}},
  \bauthor{\bsnm{LeJeune},~\bfnm{Daniel}\binits{D.}},
  \bauthor{\bsnm{Siahkoohi},~\bfnm{Ali}\binits{A.}} \AND
  \bauthor{\bsnm{Baraniuk},~\bfnm{Richard}\binits{R.}}
(\byear{2024}).
\btitle{Self-Consuming Generative Models Go {MAD}}.
In \bbooktitle{The Twelfth International Conference on Learning
  Representations}.
\end{binproceedings}
\endbibitem

\bibitem{anderson1982reverse}
\begin{barticle}[author]
\bauthor{\bsnm{Anderson},~\bfnm{Brian~DO}\binits{B.~D.}}
(\byear{1982}).
\btitle{Reverse-time diffusion equation models}.
\bjournal{Stochastic Processes and their Applications}
\bvolume{12}
\bpages{313--326}.
\end{barticle}
\endbibitem

\bibitem{ansari2024chronos}
\begin{barticle}[author]
\bauthor{\bsnm{Ansari},~\bfnm{Abdul~Fatir}\binits{A.~F.}},
  \bauthor{\bsnm{Stella},~\bfnm{Lorenzo}\binits{L.}},
  \bauthor{\bsnm{Turkmen},~\bfnm{Caner}\binits{C.}},
  \bauthor{\bsnm{Zhang},~\bfnm{Xiyuan}\binits{X.}},
  \bauthor{\bsnm{Mercado},~\bfnm{Pedro}\binits{P.}},
  \bauthor{\bsnm{Shen},~\bfnm{Huibin}\binits{H.}},
  \bauthor{\bsnm{Shchur},~\bfnm{Oleksandr}\binits{O.}},
  \bauthor{\bsnm{Rangapuram},~\bfnm{Syama~Sundar}\binits{S.~S.}},
  \bauthor{\bsnm{Arango},~\bfnm{Sebastian~Pineda}\binits{S.~P.}},
  \bauthor{\bsnm{Kapoor},~\bfnm{Shubham}\binits{S.}} \betal{et~al.}
(\byear{2024}).
\btitle{Chronos: Learning the language of time series}.
\bjournal{arXiv preprint arXiv:2403.07815}.
\end{barticle}
\endbibitem

\bibitem{claude4_2025}
\begin{bmisc}[author]
\bauthor{\bsnm{{Anthropic}}}
(\byear{2025}).
\btitle{{Claude 4 Family: Opus, Sonnet, and Haiku Models}}.
\bhowpublished{\url{https://www.anthropic.com/index/claude-4}}.
\bnote{Accessed: 2025-07-29}.
\end{bmisc}
\endbibitem

\bibitem{austin2021structured}
\begin{barticle}[author]
\bauthor{\bsnm{Austin},~\bfnm{Jacob}\binits{J.}},
  \bauthor{\bsnm{Johnson},~\bfnm{Daniel~D}\binits{D.~D.}},
  \bauthor{\bsnm{Ho},~\bfnm{Jonathan}\binits{J.}},
  \bauthor{\bsnm{Tarlow},~\bfnm{Daniel}\binits{D.}} \AND \bauthor{\bsnm{Van
  Den~Berg},~\bfnm{Rianne}\binits{R.}}
(\byear{2021}).
\btitle{Structured denoising diffusion models in discrete state-spaces}.
\bjournal{Advances in neural information processing systems}
\bvolume{34}
\bpages{17981--17993}.
\end{barticle}
\endbibitem

\bibitem{azar2024general}
\begin{binproceedings}[author]
\bauthor{\bsnm{Azar},~\bfnm{Mohammad~Gheshlaghi}\binits{M.~G.}},
  \bauthor{\bsnm{Guo},~\bfnm{Zhaohan~Daniel}\binits{Z.~D.}},
  \bauthor{\bsnm{Piot},~\bfnm{Bilal}\binits{B.}},
  \bauthor{\bsnm{Munos},~\bfnm{Remi}\binits{R.}},
  \bauthor{\bsnm{Rowland},~\bfnm{Mark}\binits{M.}},
  \bauthor{\bsnm{Valko},~\bfnm{Michal}\binits{M.}} \AND
  \bauthor{\bsnm{Calandriello},~\bfnm{Daniele}\binits{D.}}
(\byear{2024}).
\btitle{A general theoretical paradigm to understand learning from human
  preferences}.
In \bbooktitle{International Conference on Artificial Intelligence and
  Statistics}
\bpages{4447--4455}.
\bpublisher{PMLR}.
\end{binproceedings}
\endbibitem

\bibitem{ba2016layer}
\begin{barticle}[author]
\bauthor{\bsnm{Ba},~\bfnm{Jimmy~Lei}\binits{J.~L.}},
  \bauthor{\bsnm{Kiros},~\bfnm{Jamie~Ryan}\binits{J.~R.}} \AND
  \bauthor{\bsnm{Hinton},~\bfnm{Geoffrey~E}\binits{G.~E.}}
(\byear{2016}).
\btitle{Layer normalization}.
\bjournal{arXiv preprint arXiv:1607.06450}.
\end{barticle}
\endbibitem

\bibitem{bach2025learning}
\begin{barticle}[author]
\bauthor{\bsnm{Bach},~\bfnm{Eviatar}\binits{E.}},
  \bauthor{\bsnm{Baptista},~\bfnm{Ricardo}\binits{R.}},
  \bauthor{\bsnm{Calvello},~\bfnm{Edoardo}\binits{E.}},
  \bauthor{\bsnm{Chen},~\bfnm{Bohan}\binits{B.}} \AND
  \bauthor{\bsnm{Stuart},~\bfnm{Andrew}\binits{A.}}
(\byear{2025}).
\btitle{Learning Enhanced Ensemble Filters}.
\bjournal{arXiv preprint arXiv:2504.17836}.
\end{barticle}
\endbibitem

\bibitem{bach2024inverse}
\begin{barticle}[author]
\bauthor{\bsnm{Bach},~\bfnm{Eviatar}\binits{E.}},
  \bauthor{\bsnm{Baptista},~\bfnm{Ricardo}\binits{R.}},
  \bauthor{\bsnm{Sanz-Alonso},~\bfnm{Daniel}\binits{D.}} \AND
  \bauthor{\bsnm{Stuart},~\bfnm{Andrew}\binits{A.}}
(\byear{2024}).
\btitle{Machine Learning for Inverse Problems and Data Assimilation}.
\bjournal{arXiv preprint arXiv:2410.10523}.
\end{barticle}
\endbibitem

\bibitem{bachmann20244m}
\begin{barticle}[author]
\bauthor{\bsnm{Bachmann},~\bfnm{Roman}\binits{R.}},
  \bauthor{\bsnm{Kar},~\bfnm{O{\u{g}}uzhan~F}\binits{O.~F.}},
  \bauthor{\bsnm{Mizrahi},~\bfnm{David}\binits{D.}},
  \bauthor{\bsnm{Garjani},~\bfnm{Ali}\binits{A.}},
  \bauthor{\bsnm{Gao},~\bfnm{Mingfei}\binits{M.}},
  \bauthor{\bsnm{Griffiths},~\bfnm{David}\binits{D.}},
  \bauthor{\bsnm{Hu},~\bfnm{Jiaming}\binits{J.}},
  \bauthor{\bsnm{Dehghan},~\bfnm{Afshin}\binits{A.}} \AND
  \bauthor{\bsnm{Zamir},~\bfnm{Amir}\binits{A.}}
(\byear{2024}).
\btitle{4{M}-21: An any-to-any vision model for tens of tasks and modalities}.
\bjournal{Advances in Neural Information Processing Systems}
\bvolume{37}
\bpages{61872--61911}.
\end{barticle}
\endbibitem

\bibitem{baptista2025mathematical}
\begin{barticle}[author]
\bauthor{\bsnm{Baptista},~\bfnm{Ricardo}\binits{R.}},
  \bauthor{\bsnm{Stuart},~\bfnm{Andrew~M}\binits{A.~M.}} \AND
  \bauthor{\bsnm{Tran},~\bfnm{Son}\binits{S.}}
(\byear{2025}).
\btitle{A Mathematical Perspective On Contrastive Learning}.
\bjournal{arXiv preprint arXiv:2505.24134}.
\end{barticle}
\endbibitem

\bibitem{bengio2003neural}
\begin{barticle}[author]
\bauthor{\bsnm{Bengio},~\bfnm{Yoshua}\binits{Y.}},
  \bauthor{\bsnm{Ducharme},~\bfnm{R{\'e}jean}\binits{R.}},
  \bauthor{\bsnm{Vincent},~\bfnm{Pascal}\binits{P.}} \AND
  \bauthor{\bsnm{Jauvin},~\bfnm{Christian}\binits{C.}}
(\byear{2003}).
\btitle{A neural probabilistic language model}.
\bjournal{Journal of machine learning research}
\bvolume{3}
\bpages{1137--1155}.
\end{barticle}
\endbibitem

\bibitem{blakeman2025nemotron}
\begin{barticle}[author]
\bauthor{\bsnm{Blakeman},~\bfnm{Aaron}\binits{A.}},
  \bauthor{\bsnm{Basant},~\bfnm{Aarti}\binits{A.}},
  \bauthor{\bsnm{Khattar},~\bfnm{Abhinav}\binits{A.}},
  \bauthor{\bsnm{Renduchintala},~\bfnm{Adithya}\binits{A.}},
  \bauthor{\bsnm{Bercovich},~\bfnm{Akhiad}\binits{A.}},
  \bauthor{\bsnm{Ficek},~\bfnm{Aleksander}\binits{A.}},
  \bauthor{\bsnm{Bjorlin},~\bfnm{Alexis}\binits{A.}},
  \bauthor{\bsnm{Taghibakhshi},~\bfnm{Ali}\binits{A.}},
  \bauthor{\bsnm{Deshmukh},~\bfnm{Amala~Sanjay}\binits{A.~S.}},
  \bauthor{\bsnm{Mahabaleshwarkar},~\bfnm{Ameya~Sunil}\binits{A.~S.}}
  \betal{et~al.}
(\byear{2025}).
\btitle{Nemotron-h: A family of accurate and efficient hybrid mamba-transformer
  models}.
\bjournal{arXiv preprint arXiv:2504.03624}.
\end{barticle}
\endbibitem

\bibitem{bridle1990probabilistic}
\begin{bincollection}[author]
\bauthor{\bsnm{Bridle},~\bfnm{John~S}\binits{J.~S.}}
(\byear{1990}).
\btitle{Probabilistic interpretation of feedforward classification network
  outputs, with relationships to statistical pattern recognition}.
In \bbooktitle{Neurocomputing: Algorithms, architectures and applications}
\bpages{227--236}.
\bpublisher{Springer}.
\end{bincollection}
\endbibitem

\bibitem{brown2020language}
\begin{barticle}[author]
\bauthor{\bsnm{Brown},~\bfnm{Tom}\binits{T.}},
  \bauthor{\bsnm{Mann},~\bfnm{Benjamin}\binits{B.}},
  \bauthor{\bsnm{Ryder},~\bfnm{Nick}\binits{N.}},
  \bauthor{\bsnm{Subbiah},~\bfnm{Melanie}\binits{M.}},
  \bauthor{\bsnm{Kaplan},~\bfnm{Jared~D}\binits{J.~D.}},
  \bauthor{\bsnm{Dhariwal},~\bfnm{Prafulla}\binits{P.}},
  \bauthor{\bsnm{Neelakantan},~\bfnm{Arvind}\binits{A.}},
  \bauthor{\bsnm{Shyam},~\bfnm{Pranav}\binits{P.}},
  \bauthor{\bsnm{Sastry},~\bfnm{Girish}\binits{G.}},
  \bauthor{\bsnm{Askell},~\bfnm{Amanda}\binits{A.}} \betal{et~al.}
(\byear{2020}).
\btitle{Language models are few-shot learners}.
\bjournal{Advances in neural information processing systems}
\bvolume{33}
\bpages{1877--1901}.
\end{barticle}
\endbibitem

\bibitem{calvello2024continuum}
\begin{barticle}[author]
\bauthor{\bsnm{Calvello},~\bfnm{Edoardo}\binits{E.}},
  \bauthor{\bsnm{Kovachki},~\bfnm{Nikola~B}\binits{N.~B.}},
  \bauthor{\bsnm{Levine},~\bfnm{Matthew~E}\binits{M.~E.}} \AND
  \bauthor{\bsnm{Stuart},~\bfnm{Andrew~M}\binits{A.~M.}}
(\byear{2024}).
\btitle{Continuum attention for neural operators}.
\bjournal{arXiv preprint arXiv:2406.06486}.
\end{barticle}
\endbibitem

\bibitem{campbell2022continuous}
\begin{barticle}[author]
\bauthor{\bsnm{Campbell},~\bfnm{Andrew}\binits{A.}},
  \bauthor{\bsnm{Benton},~\bfnm{Joe}\binits{J.}},
  \bauthor{\bsnm{De~Bortoli},~\bfnm{Valentin}\binits{V.}},
  \bauthor{\bsnm{Rainforth},~\bfnm{Thomas}\binits{T.}},
  \bauthor{\bsnm{Deligiannidis},~\bfnm{George}\binits{G.}} \AND
  \bauthor{\bsnm{Doucet},~\bfnm{Arnaud}\binits{A.}}
(\byear{2022}).
\btitle{A continuous time framework for discrete denoising models}.
\bjournal{Advances in Neural Information Processing Systems}
\bvolume{35}
\bpages{28266--28279}.
\end{barticle}
\endbibitem

\bibitem{campbell2024generative}
\begin{binproceedings}[author]
\bauthor{\bsnm{Campbell},~\bfnm{Andrew}\binits{A.}},
  \bauthor{\bsnm{Yim},~\bfnm{Jason}\binits{J.}},
  \bauthor{\bsnm{Barzilay},~\bfnm{Regina}\binits{R.}},
  \bauthor{\bsnm{Rainforth},~\bfnm{Tom}\binits{T.}} \AND
  \bauthor{\bsnm{Jaakkola},~\bfnm{Tommi}\binits{T.}}
(\byear{2024}).
\btitle{Generative Flows on Discrete State-Spaces: Enabling Multimodal Flows
  with Applications to Protein Co-Design}.
In \bbooktitle{International Conference on Machine Learning}
\bpages{5453--5512}.
\bpublisher{PMLR}.
\end{binproceedings}
\endbibitem

\bibitem{cao2006efficient}
\begin{barticle}[author]
\bauthor{\bsnm{Cao},~\bfnm{Yang}\binits{Y.}},
  \bauthor{\bsnm{Gillespie},~\bfnm{Daniel~T}\binits{D.~T.}} \AND
  \bauthor{\bsnm{Petzold},~\bfnm{Linda~R}\binits{L.~R.}}
(\byear{2006}).
\btitle{Efficient step size selection for the tau-leaping simulation method}.
\bjournal{The Journal of chemical physics}
\bvolume{124}.
\end{barticle}
\endbibitem

\bibitem{castin2025unified}
\begin{barticle}[author]
\bauthor{\bsnm{Castin},~\bfnm{Val{\'e}rie}\binits{V.}},
  \bauthor{\bsnm{Ablin},~\bfnm{Pierre}\binits{P.}},
  \bauthor{\bsnm{Carrillo},~\bfnm{Jos{\'e}~Antonio}\binits{J.~A.}} \AND
  \bauthor{\bsnm{Peyr{\'e}},~\bfnm{Gabriel}\binits{G.}}
(\byear{2025}).
\btitle{A unified perspective on the dynamics of deep transformers}.
\bjournal{arXiv preprint arXiv:2501.18322}.
\end{barticle}
\endbibitem

\bibitem{chauhan2023comprehensive}
\begin{barticle}[author]
\bauthor{\bsnm{Chauhan},~\bfnm{Shweta}\binits{S.}} \AND
  \bauthor{\bsnm{Daniel},~\bfnm{Philemon}\binits{P.}}
(\byear{2023}).
\btitle{A comprehensive survey on various fully automatic machine translation
  evaluation metrics}.
\bjournal{Neural Processing Letters}
\bvolume{55}
\bpages{12663--12717}.
\end{barticle}
\endbibitem

\bibitem{chen2023skills}
\begin{barticle}[author]
\bauthor{\bsnm{Chen},~\bfnm{Jiaao}\binits{J.}},
  \bauthor{\bsnm{Pan},~\bfnm{Xiaoman}\binits{X.}},
  \bauthor{\bsnm{Yu},~\bfnm{Dian}\binits{D.}},
  \bauthor{\bsnm{Song},~\bfnm{Kaiqiang}\binits{K.}},
  \bauthor{\bsnm{Wang},~\bfnm{Xiaoyang}\binits{X.}},
  \bauthor{\bsnm{Yu},~\bfnm{Dong}\binits{D.}} \AND
  \bauthor{\bsnm{Chen},~\bfnm{Jianshu}\binits{J.}}
(\byear{2023}).
\btitle{Skills-in-context prompting: Unlocking compositionality in large
  language models}.
\bjournal{arXiv preprint arXiv:2308.00304}.
\end{barticle}
\endbibitem

\bibitem{chen2021decision}
\begin{barticle}[author]
\bauthor{\bsnm{Chen},~\bfnm{Lili}\binits{L.}},
  \bauthor{\bsnm{Lu},~\bfnm{Kevin}\binits{K.}},
  \bauthor{\bsnm{Rajeswaran},~\bfnm{Aravind}\binits{A.}},
  \bauthor{\bsnm{Lee},~\bfnm{Kimin}\binits{K.}},
  \bauthor{\bsnm{Grover},~\bfnm{Aditya}\binits{A.}},
  \bauthor{\bsnm{Laskin},~\bfnm{Misha}\binits{M.}},
  \bauthor{\bsnm{Abbeel},~\bfnm{Pieter}\binits{P.}},
  \bauthor{\bsnm{Srinivas},~\bfnm{Aravind}\binits{A.}} \AND
  \bauthor{\bsnm{Mordatch},~\bfnm{Igor}\binits{I.}}
(\byear{2021}).
\btitle{Decision transformer: Reinforcement learning via sequence modeling}.
\bjournal{Advances in neural information processing systems}
\bvolume{34}
\bpages{15084--15097}.
\end{barticle}
\endbibitem

\bibitem{chen2025quantitative}
\begin{barticle}[author]
\bauthor{\bsnm{Chen},~\bfnm{Shi}\binits{S.}},
  \bauthor{\bsnm{Lin},~\bfnm{Zhengjiang}\binits{Z.}},
  \bauthor{\bsnm{Polyanskiy},~\bfnm{Yury}\binits{Y.}} \AND
  \bauthor{\bsnm{Rigollet},~\bfnm{Philippe}\binits{P.}}
(\byear{2025}).
\btitle{Quantitative Clustering in Mean-Field Transformer Models}.
\bjournal{arXiv preprint arXiv:2504.14697}.
\end{barticle}
\endbibitem

\bibitem{child2019generating}
\begin{barticle}[author]
\bauthor{\bsnm{Child},~\bfnm{Rewon}\binits{R.}},
  \bauthor{\bsnm{Gray},~\bfnm{Scott}\binits{S.}},
  \bauthor{\bsnm{Radford},~\bfnm{Alec}\binits{A.}} \AND
  \bauthor{\bsnm{Sutskever},~\bfnm{Ilya}\binits{I.}}
(\byear{2019}).
\btitle{Generating long sequences with sparse transformers}.
\bjournal{arXiv preprint arXiv:1904.10509}.
\end{barticle}
\endbibitem

\bibitem{choromanski2020rethinking}
\begin{barticle}[author]
\bauthor{\bsnm{Choromanski},~\bfnm{Krzysztof}\binits{K.}},
  \bauthor{\bsnm{Likhosherstov},~\bfnm{Valerii}\binits{V.}},
  \bauthor{\bsnm{Dohan},~\bfnm{David}\binits{D.}},
  \bauthor{\bsnm{Song},~\bfnm{Xingyou}\binits{X.}},
  \bauthor{\bsnm{Gane},~\bfnm{Andreea}\binits{A.}},
  \bauthor{\bsnm{Sarlos},~\bfnm{Tamas}\binits{T.}},
  \bauthor{\bsnm{Hawkins},~\bfnm{Peter}\binits{P.}},
  \bauthor{\bsnm{Davis},~\bfnm{Jared}\binits{J.}},
  \bauthor{\bsnm{Mohiuddin},~\bfnm{Afroz}\binits{A.}},
  \bauthor{\bsnm{Kaiser},~\bfnm{Lukasz}\binits{L.}} \betal{et~al.}
(\byear{2020}).
\btitle{Rethinking attention with performers}.
\bjournal{arXiv preprint arXiv:2009.14794}.
\end{barticle}
\endbibitem

\bibitem{christiano2017deep}
\begin{barticle}[author]
\bauthor{\bsnm{Christiano},~\bfnm{Paul~F}\binits{P.~F.}},
  \bauthor{\bsnm{Leike},~\bfnm{Jan}\binits{J.}},
  \bauthor{\bsnm{Brown},~\bfnm{Tom}\binits{T.}},
  \bauthor{\bsnm{Martic},~\bfnm{Miljan}\binits{M.}},
  \bauthor{\bsnm{Legg},~\bfnm{Shane}\binits{S.}} \AND
  \bauthor{\bsnm{Amodei},~\bfnm{Dario}\binits{D.}}
(\byear{2017}).
\btitle{Deep reinforcement learning from human preferences}.
\bjournal{Advances in neural information processing systems}
\bvolume{30}.
\end{barticle}
\endbibitem

\bibitem{d2024we}
\begin{barticle}[author]
\bauthor{\bsnm{D'Angelo},~\bfnm{Francesco}\binits{F.}},
  \bauthor{\bsnm{Andriushchenko},~\bfnm{Maksym}\binits{M.}},
  \bauthor{\bsnm{Varre},~\bfnm{Aditya~Vardhan}\binits{A.~V.}} \AND
  \bauthor{\bsnm{Flammarion},~\bfnm{Nicolas}\binits{N.}}
(\byear{2024}).
\btitle{Why do we need weight decay in modern deep learning?}
\bjournal{Advances in Neural Information Processing Systems}
\bvolume{37}
\bpages{23191--23223}.
\end{barticle}
\endbibitem

\bibitem{dao2022flashattention}
\begin{barticle}[author]
\bauthor{\bsnm{Dao},~\bfnm{Tri}\binits{T.}},
  \bauthor{\bsnm{Fu},~\bfnm{Dan}\binits{D.}},
  \bauthor{\bsnm{Ermon},~\bfnm{Stefano}\binits{S.}},
  \bauthor{\bsnm{Rudra},~\bfnm{Atri}\binits{A.}} \AND
  \bauthor{\bsnm{R{\'e}},~\bfnm{Christopher}\binits{C.}}
(\byear{2022}).
\btitle{FlashAttention: Fast and memory-efficient exact attention with
  {IO}-awareness}.
\bjournal{Advances in neural information processing systems}
\bvolume{35}
\bpages{16344--16359}.
\end{barticle}
\endbibitem

\bibitem{devlin2019bert}
\begin{binproceedings}[author]
\bauthor{\bsnm{Devlin},~\bfnm{Jacob}\binits{J.}},
  \bauthor{\bsnm{Chang},~\bfnm{Ming-Wei}\binits{M.-W.}},
  \bauthor{\bsnm{Lee},~\bfnm{Kenton}\binits{K.}} \AND
  \bauthor{\bsnm{Toutanova},~\bfnm{Kristina}\binits{K.}}
(\byear{2019}).
\btitle{Bert: Pre-training of deep bidirectional transformers for language
  understanding}.
In \bbooktitle{Proceedings of the 2019 conference of the North American chapter
  of the association for computational linguistics: human language
  technologies, volume 1 (long and short papers)}
\bpages{4171--4186}.
\end{binproceedings}
\endbibitem

\bibitem{ding2024fewer}
\begin{barticle}[author]
\bauthor{\bsnm{Ding},~\bfnm{Hantian}\binits{H.}},
  \bauthor{\bsnm{Wang},~\bfnm{Zijian}\binits{Z.}},
  \bauthor{\bsnm{Paolini},~\bfnm{Giovanni}\binits{G.}},
  \bauthor{\bsnm{Kumar},~\bfnm{Varun}\binits{V.}},
  \bauthor{\bsnm{Deoras},~\bfnm{Anoop}\binits{A.}},
  \bauthor{\bsnm{Roth},~\bfnm{Dan}\binits{D.}} \AND
  \bauthor{\bsnm{Soatto},~\bfnm{Stefano}\binits{S.}}
(\byear{2024}).
\btitle{Fewer truncations improve language modeling}.
\bjournal{arXiv preprint arXiv:2404.10830}.
\end{barticle}
\endbibitem

\bibitem{donoho2024data}
\begin{barticle}[author]
\bauthor{\bsnm{Donoho},~\bfnm{David}\binits{D.}}
(\byear{2024}).
\btitle{Data science at the singularity}.
\bjournal{Harvard Data Science Review}
\bvolume{6}.
\end{barticle}
\endbibitem

\bibitem{dosovitskiy2020image}
\begin{binproceedings}[author]
\bauthor{\bsnm{Dosovitskiy},~\bfnm{Alexey}\binits{A.}},
  \bauthor{\bsnm{Beyer},~\bfnm{Lucas}\binits{L.}},
  \bauthor{\bsnm{Kolesnikov},~\bfnm{Alexander}\binits{A.}},
  \bauthor{\bsnm{Weissenborn},~\bfnm{Dirk}\binits{D.}},
  \bauthor{\bsnm{Zhai},~\bfnm{Xiaohua}\binits{X.}},
  \bauthor{\bsnm{Unterthiner},~\bfnm{Thomas}\binits{T.}},
  \bauthor{\bsnm{Dehghani},~\bfnm{Mostafa}\binits{M.}},
  \bauthor{\bsnm{Minderer},~\bfnm{Matthias}\binits{M.}},
  \bauthor{\bsnm{Heigold},~\bfnm{G}\binits{G.}},
  \bauthor{\bsnm{Gelly},~\bfnm{S}\binits{S.}} \betal{et~al.}
(\byear{2020}).
\btitle{An Image is Worth 16x16 Words: Transformers for Image Recognition at
  Scale}.
In \bbooktitle{International Conference on Learning Representations}.
\end{binproceedings}
\endbibitem

\bibitem{engl1996regularization}
\begin{barticle}[author]
\bauthor{\bsnm{Engl},~\bfnm{HW}\binits{H.}},
  \bauthor{\bsnm{Hanke},~\bfnm{M}\binits{M.}} \AND
  \bauthor{\bsnm{Neubauer},~\bfnm{A}\binits{A.}}
(\byear{1996}).
\btitle{Regularization of Inverse Problems}.
\bjournal{Kluwer Academic Publishers; Dodrecht, Boston, London}.
\end{barticle}
\endbibitem

\bibitem{freitag2017beam}
\begin{binproceedings}[author]
\bauthor{\bsnm{Freitag},~\bfnm{Markus}\binits{M.}} \AND
  \bauthor{\bsnm{Al-Onaizan},~\bfnm{Yaser}\binits{Y.}}
(\byear{2017}).
\btitle{Beam Search Strategies for Neural Machine Translation}.
In \bbooktitle{Proceedings of the First Workshop on Neural Machine Translation}
\bpages{56--60}.
\end{binproceedings}
\endbibitem

\bibitem{frieder2023large}
\begin{barticle}[author]
\bauthor{\bsnm{Frieder},~\bfnm{Simon}\binits{S.}},
  \bauthor{\bsnm{Berner},~\bfnm{Julius}\binits{J.}},
  \bauthor{\bsnm{Petersen},~\bfnm{Philipp}\binits{P.}} \AND
  \bauthor{\bsnm{Lukasiewicz},~\bfnm{Thomas}\binits{T.}}
(\byear{2023}).
\btitle{Large language models for mathematicians}.
\bjournal{arXiv preprint arXiv:2312.04556}.
\end{barticle}
\endbibitem

\bibitem{gage1994new}
\begin{barticle}[author]
\bauthor{\bsnm{Gage},~\bfnm{Philip}\binits{P.}}
(\byear{1994}).
\btitle{A new algorithm for data compression}.
\bjournal{The C Users Journal}
\bvolume{12}
\bpages{23--38}.
\end{barticle}
\endbibitem

\bibitem{gerstgrasser2024model}
\begin{barticle}[author]
\bauthor{\bsnm{Gerstgrasser},~\bfnm{Matthias}\binits{M.}},
  \bauthor{\bsnm{Schaeffer},~\bfnm{Rylan}\binits{R.}},
  \bauthor{\bsnm{Dey},~\bfnm{Apratim}\binits{A.}},
  \bauthor{\bsnm{Rafailov},~\bfnm{Rafael}\binits{R.}},
  \bauthor{\bsnm{Sleight},~\bfnm{Henry}\binits{H.}},
  \bauthor{\bsnm{Hughes},~\bfnm{John}\binits{J.}},
  \bauthor{\bsnm{Korbak},~\bfnm{Tomasz}\binits{T.}},
  \bauthor{\bsnm{Agrawal},~\bfnm{Rajashree}\binits{R.}},
  \bauthor{\bsnm{Pai},~\bfnm{Dhruv}\binits{D.}},
  \bauthor{\bsnm{Gromov},~\bfnm{Andrey}\binits{A.}} \betal{et~al.}
(\byear{2024}).
\btitle{Is model collapse inevitable? breaking the curse of recursion by
  accumulating real and synthetic data}.
\bjournal{arXiv preprint arXiv:2404.01413}.
\end{barticle}
\endbibitem

\bibitem{geshkovski2024dynamic}
\begin{barticle}[author]
\bauthor{\bsnm{Geshkovski},~\bfnm{Borjan}\binits{B.}},
  \bauthor{\bsnm{Koubbi},~\bfnm{Hugo}\binits{H.}},
  \bauthor{\bsnm{Polyanskiy},~\bfnm{Yury}\binits{Y.}} \AND
  \bauthor{\bsnm{Rigollet},~\bfnm{Philippe}\binits{P.}}
(\byear{2024}).
\btitle{Dynamic metastability in the self-attention model}.
\bjournal{arXiv preprint arXiv:2410.06833}.
\end{barticle}
\endbibitem

\bibitem{geshkovski2023emergence}
\begin{barticle}[author]
\bauthor{\bsnm{Geshkovski},~\bfnm{Borjan}\binits{B.}},
  \bauthor{\bsnm{Letrouit},~\bfnm{Cyril}\binits{C.}},
  \bauthor{\bsnm{Polyanskiy},~\bfnm{Yury}\binits{Y.}} \AND
  \bauthor{\bsnm{Rigollet},~\bfnm{Philippe}\binits{P.}}
(\byear{2023}).
\btitle{The emergence of clusters in self-attention dynamics}.
\bjournal{Advances in Neural Information Processing Systems}
\bvolume{36}
\bpages{57026--57037}.
\end{barticle}
\endbibitem

\bibitem{geshkovski2025mathematical}
\begin{barticle}[author]
\bauthor{\bsnm{Geshkovski},~\bfnm{Borjan}\binits{B.}},
  \bauthor{\bsnm{Letrouit},~\bfnm{Cyril}\binits{C.}},
  \bauthor{\bsnm{Polyanskiy},~\bfnm{Yury}\binits{Y.}} \AND
  \bauthor{\bsnm{Rigollet},~\bfnm{Philippe}\binits{P.}}
(\byear{2025}).
\btitle{A mathematical perspective on transformers}.
\bjournal{Bulletin of the American Mathematical Society}
\bvolume{62}
\bpages{427--479}.
\end{barticle}
\endbibitem

\bibitem{geshkovski2024measure}
\begin{barticle}[author]
\bauthor{\bsnm{Geshkovski},~\bfnm{Borjan}\binits{B.}},
  \bauthor{\bsnm{Rigollet},~\bfnm{Philippe}\binits{P.}} \AND
  \bauthor{\bsnm{Ruiz-Balet},~\bfnm{Dom{\`e}nec}\binits{D.}}
(\byear{2024}).
\btitle{Measure-to-measure interpolation using Transformers}.
\bjournal{arXiv preprint arXiv:2411.04551}.
\end{barticle}
\endbibitem

\bibitem{gillespie1977exact}
\begin{barticle}[author]
\bauthor{\bsnm{Gillespie},~\bfnm{Daniel~T}\binits{D.~T.}}
(\byear{1977}).
\btitle{Exact stochastic simulation of coupled chemical reactions}.
\bjournal{The journal of physical chemistry}
\bvolume{81}
\bpages{2340--2361}.
\end{barticle}
\endbibitem

\bibitem{goodfellow2016deep}
\begin{bbook}[author]
\bauthor{\bsnm{Goodfellow},~\bfnm{Ian}\binits{I.}},
  \bauthor{\bsnm{Bengio},~\bfnm{Yoshua}\binits{Y.}},
  \bauthor{\bsnm{Courville},~\bfnm{Aaron}\binits{A.}} \AND
  \bauthor{\bsnm{Bengio},~\bfnm{Yoshua}\binits{Y.}}
(\byear{2016}).
\btitle{Deep Learning}
\bvolume{1}.
\bpublisher{MIT press Cambridge}.
\end{bbook}
\endbibitem

\bibitem{grattafiori2024llama}
\begin{barticle}[author]
\bauthor{\bsnm{Grattafiori},~\bfnm{Aaron}\binits{A.}},
  \bauthor{\bsnm{Dubey},~\bfnm{Abhimanyu}\binits{A.}},
  \bauthor{\bsnm{Jauhri},~\bfnm{Abhinav}\binits{A.}},
  \bauthor{\bsnm{Pandey},~\bfnm{Abhinav}\binits{A.}},
  \bauthor{\bsnm{Kadian},~\bfnm{Abhishek}\binits{A.}},
  \bauthor{\bsnm{Al-Dahle},~\bfnm{Ahmad}\binits{A.}},
  \bauthor{\bsnm{Letman},~\bfnm{Aiesha}\binits{A.}},
  \bauthor{\bsnm{Mathur},~\bfnm{Akhil}\binits{A.}},
  \bauthor{\bsnm{Schelten},~\bfnm{Alan}\binits{A.}},
  \bauthor{\bsnm{Vaughan},~\bfnm{Alex}\binits{A.}} \betal{et~al.}
(\byear{2024}).
\btitle{The {Llama} 3 herd of models}.
\bjournal{arXiv preprint arXiv:2407.21783}.
\end{barticle}
\endbibitem

\bibitem{gu2023mamba}
\begin{barticle}[author]
\bauthor{\bsnm{Gu},~\bfnm{Albert}\binits{A.}} \AND
  \bauthor{\bsnm{Dao},~\bfnm{Tri}\binits{T.}}
(\byear{2023}).
\btitle{Mamba: Linear-time sequence modeling with selective state spaces}.
\bjournal{arXiv preprint arXiv:2312.00752}.
\end{barticle}
\endbibitem

\bibitem{guu2020retrieval}
\begin{binproceedings}[author]
\bauthor{\bsnm{Guu},~\bfnm{Kelvin}\binits{K.}},
  \bauthor{\bsnm{Lee},~\bfnm{Kenton}\binits{K.}},
  \bauthor{\bsnm{Tung},~\bfnm{Zora}\binits{Z.}},
  \bauthor{\bsnm{Pasupat},~\bfnm{Panupong}\binits{P.}} \AND
  \bauthor{\bsnm{Chang},~\bfnm{Mingwei}\binits{M.}}
(\byear{2020}).
\btitle{Retrieval augmented language model pre-training}.
In \bbooktitle{International conference on machine learning}
\bpages{3929--3938}.
\bpublisher{PMLR}.
\end{binproceedings}
\endbibitem

\bibitem{hendrycks2020measuring}
\begin{barticle}[author]
\bauthor{\bsnm{Hendrycks},~\bfnm{Dan}\binits{D.}},
  \bauthor{\bsnm{Burns},~\bfnm{Collin}\binits{C.}},
  \bauthor{\bsnm{Basart},~\bfnm{Steven}\binits{S.}},
  \bauthor{\bsnm{Zou},~\bfnm{Andy}\binits{A.}},
  \bauthor{\bsnm{Mazeika},~\bfnm{Mantas}\binits{M.}},
  \bauthor{\bsnm{Song},~\bfnm{Dawn}\binits{D.}} \AND
  \bauthor{\bsnm{Steinhardt},~\bfnm{Jacob}\binits{J.}}
(\byear{2020}).
\btitle{Measuring massive multitask language understanding}.
\bjournal{arXiv preprint arXiv:2009.03300}.
\end{barticle}
\endbibitem

\bibitem{hoffmann2022training}
\begin{binproceedings}[author]
\bauthor{\bsnm{Hoffmann},~\bfnm{Jordan}\binits{J.}},
  \bauthor{\bsnm{Borgeaud},~\bfnm{Sebastian}\binits{S.}},
  \bauthor{\bsnm{Mensch},~\bfnm{Arthur}\binits{A.}},
  \bauthor{\bsnm{Buchatskaya},~\bfnm{Elena}\binits{E.}},
  \bauthor{\bsnm{Cai},~\bfnm{Trevor}\binits{T.}},
  \bauthor{\bsnm{Rutherford},~\bfnm{Eliza}\binits{E.}}, \bauthor{\bparticle{de}
  \bsnm{Las~Casas},~\bfnm{Diego}\binits{D.}},
  \bauthor{\bsnm{Hendricks},~\bfnm{Lisa~Anne}\binits{L.~A.}},
  \bauthor{\bsnm{Welbl},~\bfnm{Johannes}\binits{J.}},
  \bauthor{\bsnm{Clark},~\bfnm{Aidan}\binits{A.}} \betal{et~al.}
(\byear{2022}).
\btitle{Training compute-optimal large language models}.
In \bbooktitle{Proceedings of the 36th International Conference on Neural
  Information Processing Systems}
\bpages{30016--30030}.
\end{binproceedings}
\endbibitem

\bibitem{holtzmancurious}
\begin{binproceedings}[author]
\bauthor{\bsnm{Holtzman},~\bfnm{Ari}\binits{A.}},
  \bauthor{\bsnm{Buys},~\bfnm{Jan}\binits{J.}},
  \bauthor{\bsnm{Du},~\bfnm{Li}\binits{L.}},
  \bauthor{\bsnm{Forbes},~\bfnm{Maxwell}\binits{M.}} \AND
  \bauthor{\bsnm{Choi},~\bfnm{Yejin}\binits{Y.}}
(\byear{2020}).
\btitle{The Curious Case of Neural Text Degeneration}.
In \bbooktitle{International Conference on Learning Representations}.
\end{binproceedings}
\endbibitem

\bibitem{hoogeboom2021argmax}
\begin{barticle}[author]
\bauthor{\bsnm{Hoogeboom},~\bfnm{Emiel}\binits{E.}},
  \bauthor{\bsnm{Nielsen},~\bfnm{Didrik}\binits{D.}},
  \bauthor{\bsnm{Jaini},~\bfnm{Priyank}\binits{P.}},
  \bauthor{\bsnm{Forr{\'e}},~\bfnm{Patrick}\binits{P.}} \AND
  \bauthor{\bsnm{Welling},~\bfnm{Max}\binits{M.}}
(\byear{2021}).
\btitle{Argmax flows and multinomial diffusion: Learning categorical
  distributions}.
\bjournal{Advances in neural information processing systems}
\bvolume{34}
\bpages{12454--12465}.
\end{barticle}
\endbibitem

\bibitem{hu2022lora}
\begin{binproceedings}[author]
\bauthor{\bsnm{Hu},~\bfnm{Edward~J}\binits{E.~J.}},
  \bauthor{\bsnm{Shen},~\bfnm{Yelong}\binits{Y.}},
  \bauthor{\bsnm{Wallis},~\bfnm{Phillip}\binits{P.}},
  \bauthor{\bsnm{Allen-Zhu},~\bfnm{Zeyuan}\binits{Z.}},
  \bauthor{\bsnm{Li},~\bfnm{Yuanzhi}\binits{Y.}},
  \bauthor{\bsnm{Wang},~\bfnm{Shean}\binits{S.}},
  \bauthor{\bsnm{Wang},~\bfnm{Lu}\binits{L.}},
  \bauthor{\bsnm{Chen},~\bfnm{Weizhu}\binits{W.}} \betal{et~al.}
(\byear{2022}).
\btitle{{LoRA}: Low-rank adaptation of large language models.}
In \bbooktitle{International Conference on Learning Representations}
\bvolume{1}
\bpages{3}.
\end{binproceedings}
\endbibitem

\bibitem{hu2025m}
\begin{binproceedings}[author]
\bauthor{\bsnm{Hu},~\bfnm{Kai}\binits{K.}},
  \bauthor{\bsnm{Gao},~\bfnm{Feng}\binits{F.}},
  \bauthor{\bsnm{Nie},~\bfnm{Xiaohan}\binits{X.}},
  \bauthor{\bsnm{Zhou},~\bfnm{Peng}\binits{P.}},
  \bauthor{\bsnm{Tran},~\bfnm{Son}\binits{S.}},
  \bauthor{\bsnm{Neiman},~\bfnm{Tal}\binits{T.}},
  \bauthor{\bsnm{Wang},~\bfnm{Lingyun}\binits{L.}},
  \bauthor{\bsnm{Shah},~\bfnm{Mubarak}\binits{M.}},
  \bauthor{\bsnm{Hamid},~\bfnm{Raffay}\binits{R.}},
  \bauthor{\bsnm{Yin},~\bfnm{Bing}\binits{B.}} \betal{et~al.}
(\byear{2025}).
\btitle{{M-LLM} based video frame selection for efficient video understanding}.
In \bbooktitle{Proceedings of the Computer Vision and Pattern Recognition
  Conference}
\bpages{13702--13712}.
\end{binproceedings}
\endbibitem

\bibitem{hui2024Qwen2}
\begin{barticle}[author]
\bauthor{\bsnm{Hui},~\bfnm{Binyuan}\binits{B.}},
  \bauthor{\bsnm{Yang},~\bfnm{Jian}\binits{J.}},
  \bauthor{\bsnm{Cui},~\bfnm{Zeyu}\binits{Z.}},
  \bauthor{\bsnm{Yang},~\bfnm{Jiaxi}\binits{J.}},
  \bauthor{\bsnm{Liu},~\bfnm{Dayiheng}\binits{D.}},
  \bauthor{\bsnm{Zhang},~\bfnm{Lei}\binits{L.}},
  \bauthor{\bsnm{Liu},~\bfnm{Tianyu}\binits{T.}},
  \bauthor{\bsnm{Zhang},~\bfnm{Jiajun}\binits{J.}},
  \bauthor{\bsnm{Yu},~\bfnm{Bowen}\binits{B.}},
  \bauthor{\bsnm{Lu},~\bfnm{Keming}\binits{K.}} \betal{et~al.}
(\byear{2024}).
\btitle{Qwen2. 5-coder technical report}.
\bjournal{arXiv preprint arXiv:2409.12186}.
\end{barticle}
\endbibitem

\bibitem{cohere_tokens_2025}
\begin{bmisc}[author]
\bauthor{\bsnm{{Cohere Inc. }}}
(\byear{2025}).
\btitle{A Guide to Tokens and Tokenizers}.
\bhowpublished{\url{https://docs.cohere.com/v2/docs/tokens-and-tokenizers}}.
\bnote{Accessed: 2025-09-23}.
\end{bmisc}
\endbibitem

\bibitem{jiang2025mme}
\begin{barticle}[author]
\bauthor{\bsnm{Jiang},~\bfnm{Dongzhi}\binits{D.}},
  \bauthor{\bsnm{Zhang},~\bfnm{Renrui}\binits{R.}},
  \bauthor{\bsnm{Guo},~\bfnm{Ziyu}\binits{Z.}},
  \bauthor{\bsnm{Li},~\bfnm{Yanwei}\binits{Y.}},
  \bauthor{\bsnm{Qi},~\bfnm{Yu}\binits{Y.}},
  \bauthor{\bsnm{Chen},~\bfnm{Xinyan}\binits{X.}},
  \bauthor{\bsnm{Wang},~\bfnm{Liuhui}\binits{L.}},
  \bauthor{\bsnm{Jin},~\bfnm{Jianhan}\binits{J.}},
  \bauthor{\bsnm{Guo},~\bfnm{Claire}\binits{C.}},
  \bauthor{\bsnm{Yan},~\bfnm{Shen}\binits{S.}} \betal{et~al.}
(\byear{2025}).
\btitle{{MME-CoT}: Benchmarking chain-of-thought in large multimodal models for
  reasoning quality, robustness, and efficiency}.
\bjournal{arXiv preprint arXiv:2502.09621}.
\end{barticle}
\endbibitem

\bibitem{jumper2021highly}
\begin{barticle}[author]
\bauthor{\bsnm{Jumper},~\bfnm{John}\binits{J.}},
  \bauthor{\bsnm{Evans},~\bfnm{Richard}\binits{R.}},
  \bauthor{\bsnm{Pritzel},~\bfnm{Alexander}\binits{A.}},
  \bauthor{\bsnm{Green},~\bfnm{Tim}\binits{T.}},
  \bauthor{\bsnm{Figurnov},~\bfnm{Michael}\binits{M.}},
  \bauthor{\bsnm{Ronneberger},~\bfnm{Olaf}\binits{O.}},
  \bauthor{\bsnm{Tunyasuvunakool},~\bfnm{Kathryn}\binits{K.}},
  \bauthor{\bsnm{Bates},~\bfnm{Russ}\binits{R.}},
  \bauthor{\bsnm{{\v{Z}}{\'\i}dek},~\bfnm{Augustin}\binits{A.}},
  \bauthor{\bsnm{Potapenko},~\bfnm{Anna}\binits{A.}} \betal{et~al.}
(\byear{2021}).
\btitle{Highly accurate protein structure prediction with AlphaFold}.
\bjournal{nature}
\bvolume{596}
\bpages{583--589}.
\end{barticle}
\endbibitem

\bibitem{kaplan2020scaling}
\begin{barticle}[author]
\bauthor{\bsnm{Kaplan},~\bfnm{Jared}\binits{J.}},
  \bauthor{\bsnm{McCandlish},~\bfnm{Sam}\binits{S.}},
  \bauthor{\bsnm{Henighan},~\bfnm{Tom}\binits{T.}},
  \bauthor{\bsnm{Brown},~\bfnm{Tom~B}\binits{T.~B.}},
  \bauthor{\bsnm{Chess},~\bfnm{Benjamin}\binits{B.}},
  \bauthor{\bsnm{Child},~\bfnm{Rewon}\binits{R.}},
  \bauthor{\bsnm{Gray},~\bfnm{Scott}\binits{S.}},
  \bauthor{\bsnm{Radford},~\bfnm{Alec}\binits{A.}},
  \bauthor{\bsnm{Wu},~\bfnm{Jeffrey}\binits{J.}} \AND
  \bauthor{\bsnm{Amodei},~\bfnm{Dario}\binits{D.}}
(\byear{2020}).
\btitle{Scaling laws for neural language models}.
\bjournal{arXiv preprint arXiv:2001.08361}.
\end{barticle}
\endbibitem

\bibitem{karagodin2024clustering}
\begin{barticle}[author]
\bauthor{\bsnm{Karagodin},~\bfnm{Nikita}\binits{N.}},
  \bauthor{\bsnm{Polyanskiy},~\bfnm{Yury}\binits{Y.}} \AND
  \bauthor{\bsnm{Rigollet},~\bfnm{Philippe}\binits{P.}}
(\byear{2024}).
\btitle{Clustering in causal attention masking}.
\bjournal{Advances in Neural Information Processing Systems}
\bvolume{37}
\bpages{115652--115681}.
\end{barticle}
\endbibitem

\bibitem{kirchenbauer2023watermark}
\begin{binproceedings}[author]
\bauthor{\bsnm{Kirchenbauer},~\bfnm{John}\binits{J.}},
  \bauthor{\bsnm{Geiping},~\bfnm{Jonas}\binits{J.}},
  \bauthor{\bsnm{Wen},~\bfnm{Yuxin}\binits{Y.}},
  \bauthor{\bsnm{Katz},~\bfnm{Jonathan}\binits{J.}},
  \bauthor{\bsnm{Miers},~\bfnm{Ian}\binits{I.}} \AND
  \bauthor{\bsnm{Goldstein},~\bfnm{Tom}\binits{T.}}
(\byear{2023}).
\btitle{A watermark for large language models}.
In \bbooktitle{International Conference on Machine Learning}
\bpages{17061--17084}.
\bpublisher{PMLR}.
\end{binproceedings}
\endbibitem

\bibitem{kobayashi2024weight}
\begin{barticle}[author]
\bauthor{\bsnm{Kobayashi},~\bfnm{Seijin}\binits{S.}},
  \bauthor{\bsnm{Akram},~\bfnm{Yassir}\binits{Y.}} \AND
  \bauthor{\bsnm{Von~Oswald},~\bfnm{Johannes}\binits{J.}}
(\byear{2024}).
\btitle{Weight decay induces low-rank attention layers}.
\bjournal{Advances in Neural Information Processing Systems}
\bvolume{37}
\bpages{4481--4510}.
\end{barticle}
\endbibitem

\bibitem{lahoti2026mamba}
\begin{binproceedings}[author]
\bauthor{\bsnm{Lahoti},~\bfnm{Aakash~Sunil}\binits{A.~S.}},
  \bauthor{\bsnm{Li},~\bfnm{Kevin}\binits{K.}},
  \bauthor{\bsnm{Chen},~\bfnm{Berlin}\binits{B.}},
  \bauthor{\bsnm{Wang},~\bfnm{Caitlin}\binits{C.}},
  \bauthor{\bsnm{Bick},~\bfnm{Aviv}\binits{A.}},
  \bauthor{\bsnm{Kolter},~\bfnm{Zico}\binits{Z.}},
  \bauthor{\bsnm{Dao},~\bfnm{Tri}\binits{T.}} \AND
  \bauthor{\bsnm{Gu},~\bfnm{Albert}\binits{A.}}
(\byear{2026}).
\btitle{Mamba-3: Improved sequence modeling using state space principles}.
In \bbooktitle{International Conference on Learning Representations}
\bvolume{2026}
\bpages{86173--86202}.
\end{binproceedings}
\endbibitem

\bibitem{lecun2002gradient}
\begin{barticle}[author]
\bauthor{\bsnm{LeCun},~\bfnm{Yann}\binits{Y.}},
  \bauthor{\bsnm{Bottou},~\bfnm{L{\'e}on}\binits{L.}},
  \bauthor{\bsnm{Bengio},~\bfnm{Yoshua}\binits{Y.}} \AND
  \bauthor{\bsnm{Haffner},~\bfnm{Patrick}\binits{P.}}
(\byear{2002}).
\btitle{Gradient-based learning applied to document recognition}.
\bjournal{Proceedings of the IEEE}
\bvolume{86}
\bpages{2278--2324}.
\end{barticle}
\endbibitem

\bibitem{DBLP:journals/corr/abs-2502-12289}
\begin{barticle}[author]
\bauthor{\bsnm{Lee},~\bfnm{Jinu}\binits{J.}} \AND
  \bauthor{\bsnm{Hockenmaier},~\bfnm{Julia}\binits{J.}}
(\byear{2025}).
\btitle{Evaluating Step-by-step Reasoning Traces: A Survey}.
\bjournal{CoRR}
\bvolume{abs/2502.12289}.
\end{barticle}
\endbibitem

\bibitem{lewis2020retrieval}
\begin{barticle}[author]
\bauthor{\bsnm{Lewis},~\bfnm{Patrick}\binits{P.}},
  \bauthor{\bsnm{Perez},~\bfnm{Ethan}\binits{E.}},
  \bauthor{\bsnm{Piktus},~\bfnm{Aleksandra}\binits{A.}},
  \bauthor{\bsnm{Petroni},~\bfnm{Fabio}\binits{F.}},
  \bauthor{\bsnm{Karpukhin},~\bfnm{Vladimir}\binits{V.}},
  \bauthor{\bsnm{Goyal},~\bfnm{Naman}\binits{N.}},
  \bauthor{\bsnm{K{\"u}ttler},~\bfnm{Heinrich}\binits{H.}},
  \bauthor{\bsnm{Lewis},~\bfnm{Mike}\binits{M.}},
  \bauthor{\bsnm{Yih},~\bfnm{Wen-tau}\binits{W.-t.}},
  \bauthor{\bsnm{Rockt{\"a}schel},~\bfnm{Tim}\binits{T.}} \betal{et~al.}
(\byear{2020}).
\btitle{Retrieval-augmented generation for knowledge-intensive {NLP} tasks}.
\bjournal{Advances in neural information processing systems}
\bvolume{33}
\bpages{9459--9474}.
\end{barticle}
\endbibitem

\bibitem{lou2024discrete}
\begin{binproceedings}[author]
\bauthor{\bsnm{Lou},~\bfnm{Aaron}\binits{A.}},
  \bauthor{\bsnm{Meng},~\bfnm{Chenlin}\binits{C.}} \AND
  \bauthor{\bsnm{Ermon},~\bfnm{Stefano}\binits{S.}}
(\byear{2024}).
\btitle{Discrete diffusion modeling by estimating the ratios of the data
  distribution}.
In \bbooktitle{Proceedings of the 41st International Conference on Machine
  Learning}
\bpages{32819--32848}.
\end{binproceedings}
\endbibitem

\bibitem{mizrahi20234m}
\begin{barticle}[author]
\bauthor{\bsnm{Mizrahi},~\bfnm{David}\binits{D.}},
  \bauthor{\bsnm{Bachmann},~\bfnm{Roman}\binits{R.}},
  \bauthor{\bsnm{Kar},~\bfnm{Oguzhan}\binits{O.}},
  \bauthor{\bsnm{Yeo},~\bfnm{Teresa}\binits{T.}},
  \bauthor{\bsnm{Gao},~\bfnm{Mingfei}\binits{M.}},
  \bauthor{\bsnm{Dehghan},~\bfnm{Afshin}\binits{A.}} \AND
  \bauthor{\bsnm{Zamir},~\bfnm{Amir}\binits{A.}}
(\byear{2023}).
\btitle{{4M}: Massively multimodal masked modeling}.
\bjournal{Advances in Neural Information Processing Systems}
\bvolume{36}
\bpages{58363--58408}.
\end{barticle}
\endbibitem

\bibitem{mondorf2024beyond}
\begin{binproceedings}[author]
\bauthor{\bsnm{Mondorf},~\bfnm{Philipp}\binits{P.}} \AND
  \bauthor{\bsnm{Plank},~\bfnm{Barbara}\binits{B.}}
(\byear{2024}).
\btitle{Beyond Accuracy: Evaluating the Reasoning Behavior of Large Language
  Models - A Survey}.
In \bbooktitle{First Conference on Language Modeling}.
\end{binproceedings}
\endbibitem

\bibitem{mu2023learning}
\begin{barticle}[author]
\bauthor{\bsnm{Mu},~\bfnm{Jesse}\binits{J.}},
  \bauthor{\bsnm{Li},~\bfnm{Xiang}\binits{X.}} \AND
  \bauthor{\bsnm{Goodman},~\bfnm{Noah}\binits{N.}}
(\byear{2023}).
\btitle{Learning to compress prompts with gist tokens}.
\bjournal{Advances in Neural Information Processing Systems}
\bvolume{36}
\bpages{19327--19352}.
\end{barticle}
\endbibitem

\bibitem{norris1998markov}
\begin{bbook}[author]
\bauthor{\bsnm{Norris},~\bfnm{James~R}\binits{J.~R.}}
(\byear{1998}).
\btitle{Markov chains}
\bvolume{2}.
\bpublisher{Cambridge university press}.
\end{bbook}
\endbibitem

\bibitem{ouyang2022training}
\begin{barticle}[author]
\bauthor{\bsnm{Ouyang},~\bfnm{Long}\binits{L.}},
  \bauthor{\bsnm{Wu},~\bfnm{Jeffrey}\binits{J.}},
  \bauthor{\bsnm{Jiang},~\bfnm{Xu}\binits{X.}},
  \bauthor{\bsnm{Almeida},~\bfnm{Diogo}\binits{D.}},
  \bauthor{\bsnm{Wainwright},~\bfnm{Carroll}\binits{C.}},
  \bauthor{\bsnm{Mishkin},~\bfnm{Pamela}\binits{P.}},
  \bauthor{\bsnm{Zhang},~\bfnm{Chong}\binits{C.}},
  \bauthor{\bsnm{Agarwal},~\bfnm{Sandhini}\binits{S.}},
  \bauthor{\bsnm{Slama},~\bfnm{Katarina}\binits{K.}},
  \bauthor{\bsnm{Ray},~\bfnm{Alex}\binits{A.}} \betal{et~al.}
(\byear{2022}).
\btitle{Training language models to follow instructions with human feedback}.
\bjournal{Advances in neural information processing systems}
\bvolume{35}
\bpages{27730--27744}.
\end{barticle}
\endbibitem

\bibitem{peng2023rwkv}
\begin{binproceedings}[author]
\bauthor{\bsnm{Peng},~\bfnm{Bo}\binits{B.}},
  \bauthor{\bsnm{Alcaide},~\bfnm{Eric}\binits{E.}},
  \bauthor{\bsnm{Anthony},~\bfnm{Quentin}\binits{Q.}},
  \bauthor{\bsnm{Albalak},~\bfnm{Alon}\binits{A.}},
  \bauthor{\bsnm{Arcadinho},~\bfnm{Samuel}\binits{S.}},
  \bauthor{\bsnm{Biderman},~\bfnm{Stella}\binits{S.}},
  \bauthor{\bsnm{Cao},~\bfnm{Huanqi}\binits{H.}},
  \bauthor{\bsnm{Cheng},~\bfnm{Xin}\binits{X.}},
  \bauthor{\bsnm{Chung},~\bfnm{Michael}\binits{M.}},
  \bauthor{\bsnm{Derczynski},~\bfnm{Leon}\binits{L.}} \betal{et~al.}
(\byear{2023}).
\btitle{Rwkv: Reinventing rnns for the transformer era}.
In \bbooktitle{Findings of the association for computational linguistics: EMNLP
  2023}
\bpages{14048--14077}.
\end{binproceedings}
\endbibitem

\bibitem{petrov2025long}
\begin{barticle}[author]
\bauthor{\bsnm{Petrov},~\bfnm{Aleksandar}\binits{A.}},
  \bauthor{\bsnm{Sandler},~\bfnm{Mark}\binits{M.}},
  \bauthor{\bsnm{Zhmoginov},~\bfnm{Andrey}\binits{A.}},
  \bauthor{\bsnm{Miller},~\bfnm{Nolan}\binits{N.}} \AND
  \bauthor{\bsnm{Vladymyrov},~\bfnm{Max}\binits{M.}}
(\byear{2025}).
\btitle{Long Context In-Context Compression by Getting to the Gist of Gisting}.
\bjournal{arXiv preprint arXiv:2504.08934}.
\end{barticle}
\endbibitem

\bibitem{phuong2022formal}
\begin{barticle}[author]
\bauthor{\bsnm{Phuong},~\bfnm{Mary}\binits{M.}} \AND
  \bauthor{\bsnm{Hutter},~\bfnm{Marcus}\binits{M.}}
(\byear{2022}).
\btitle{Formal algorithms for transformers}.
\bjournal{arXiv preprint arXiv:2207.09238}.
\end{barticle}
\endbibitem

\bibitem{radford2021learning}
\begin{binproceedings}[author]
\bauthor{\bsnm{Radford},~\bfnm{Alec}\binits{A.}},
  \bauthor{\bsnm{Kim},~\bfnm{Jong~Wook}\binits{J.~W.}},
  \bauthor{\bsnm{Hallacy},~\bfnm{Chris}\binits{C.}},
  \bauthor{\bsnm{Ramesh},~\bfnm{Aditya}\binits{A.}},
  \bauthor{\bsnm{Goh},~\bfnm{Gabriel}\binits{G.}},
  \bauthor{\bsnm{Agarwal},~\bfnm{Sandhini}\binits{S.}},
  \bauthor{\bsnm{Sastry},~\bfnm{Girish}\binits{G.}},
  \bauthor{\bsnm{Askell},~\bfnm{Amanda}\binits{A.}},
  \bauthor{\bsnm{Mishkin},~\bfnm{Pamela}\binits{P.}},
  \bauthor{\bsnm{Clark},~\bfnm{Jack}\binits{J.}} \betal{et~al.}
(\byear{2021}).
\btitle{Learning transferable visual models from natural language supervision}.
In \bbooktitle{International conference on machine learning}
\bpages{8748--8763}.
\bpublisher{PMLR}.
\end{binproceedings}
\endbibitem

\bibitem{radford2015unsupervised}
\begin{barticle}[author]
\bauthor{\bsnm{Radford},~\bfnm{Alec}\binits{A.}},
  \bauthor{\bsnm{Metz},~\bfnm{Luke}\binits{L.}} \AND
  \bauthor{\bsnm{Chintala},~\bfnm{Soumith}\binits{S.}}
(\byear{2015}).
\btitle{Unsupervised representation learning with deep convolutional generative
  adversarial networks}.
\bjournal{arXiv preprint arXiv:1511.06434}.
\end{barticle}
\endbibitem

\bibitem{radford2018improving}
\begin{barticle}[author]
\bauthor{\bsnm{Radford},~\bfnm{Alec}\binits{A.}},
  \bauthor{\bsnm{Narasimhan},~\bfnm{Karthik}\binits{K.}},
  \bauthor{\bsnm{Salimans},~\bfnm{Tim}\binits{T.}},
  \bauthor{\bsnm{Sutskever},~\bfnm{Ilya}\binits{I.}} \betal{et~al.}
(\byear{2018}).
\btitle{Improving language understanding by generative pre-training}.
\end{barticle}
\endbibitem

\bibitem{radford2019language}
\begin{barticle}[author]
\bauthor{\bsnm{Radford},~\bfnm{Alec}\binits{A.}},
  \bauthor{\bsnm{Wu},~\bfnm{Jeffrey}\binits{J.}},
  \bauthor{\bsnm{Child},~\bfnm{Rewon}\binits{R.}},
  \bauthor{\bsnm{Luan},~\bfnm{David}\binits{D.}},
  \bauthor{\bsnm{Amodei},~\bfnm{Dario}\binits{D.}},
  \bauthor{\bsnm{Sutskever},~\bfnm{Ilya}\binits{I.}} \betal{et~al.}
(\byear{2019}).
\btitle{Language models are unsupervised multitask learners}.
\bjournal{OpenAI blog}
\bvolume{1}
\bpages{9}.
\end{barticle}
\endbibitem

\bibitem{rafailov2023direct}
\begin{barticle}[author]
\bauthor{\bsnm{Rafailov},~\bfnm{Rafael}\binits{R.}},
  \bauthor{\bsnm{Sharma},~\bfnm{Archit}\binits{A.}},
  \bauthor{\bsnm{Mitchell},~\bfnm{Eric}\binits{E.}},
  \bauthor{\bsnm{Manning},~\bfnm{Christopher~D}\binits{C.~D.}},
  \bauthor{\bsnm{Ermon},~\bfnm{Stefano}\binits{S.}} \AND
  \bauthor{\bsnm{Finn},~\bfnm{Chelsea}\binits{C.}}
(\byear{2023}).
\btitle{Direct preference optimization: Your language model is secretly a
  reward model}.
\bjournal{Advances in neural information processing systems}
\bvolume{36}
\bpages{53728--53741}.
\end{barticle}
\endbibitem

\bibitem{raffel2020exploring}
\begin{barticle}[author]
\bauthor{\bsnm{Raffel},~\bfnm{Colin}\binits{C.}},
  \bauthor{\bsnm{Shazeer},~\bfnm{Noam}\binits{N.}},
  \bauthor{\bsnm{Roberts},~\bfnm{Adam}\binits{A.}},
  \bauthor{\bsnm{Lee},~\bfnm{Katherine}\binits{K.}},
  \bauthor{\bsnm{Narang},~\bfnm{Sharan}\binits{S.}},
  \bauthor{\bsnm{Matena},~\bfnm{Michael}\binits{M.}},
  \bauthor{\bsnm{Zhou},~\bfnm{Yanqi}\binits{Y.}},
  \bauthor{\bsnm{Li},~\bfnm{Wei}\binits{W.}} \AND
  \bauthor{\bsnm{Liu},~\bfnm{Peter~J}\binits{P.~J.}}
(\byear{2020}).
\btitle{Exploring the limits of transfer learning with a unified text-to-text
  transformer}.
\bjournal{Journal of Machine Learning Research}
\bvolume{21}
\bpages{1--67}.
\end{barticle}
\endbibitem

\bibitem{rajaraman2024toward}
\begin{barticle}[author]
\bauthor{\bsnm{Rajaraman},~\bfnm{Nived}\binits{N.}},
  \bauthor{\bsnm{Jiao},~\bfnm{Jiantao}\binits{J.}} \AND
  \bauthor{\bsnm{Ramchandran},~\bfnm{Kannan}\binits{K.}}
(\byear{2024}).
\btitle{Toward a theory of tokenization in {LLMs}}.
\bjournal{arXiv preprint arXiv:2404.08335}.
\end{barticle}
\endbibitem

\bibitem{razavi2019generating}
\begin{barticle}[author]
\bauthor{\bsnm{Razavi},~\bfnm{Ali}\binits{A.}}, \bauthor{\bparticle{Van~den}
  \bsnm{Oord},~\bfnm{Aaron}\binits{A.}} \AND
  \bauthor{\bsnm{Vinyals},~\bfnm{Oriol}\binits{O.}}
(\byear{2019}).
\btitle{Generating diverse high-fidelity images with {VQ}-{VAE}-2}.
\bjournal{Advances in Neural Information Processing Systems}
\bvolume{32}.
\end{barticle}
\endbibitem

\bibitem{robbins1951stochastic}
\begin{barticle}[author]
\bauthor{\bsnm{Robbins},~\bfnm{Herbert}\binits{H.}} \AND
  \bauthor{\bsnm{Monro},~\bfnm{Sutton}\binits{S.}}
(\byear{1951}).
\btitle{A stochastic approximation method}.
\bjournal{The annals of mathematical statistics}
\bpages{400--407}.
\end{barticle}
\endbibitem

\bibitem{sahoo2024simple}
\begin{barticle}[author]
\bauthor{\bsnm{Sahoo},~\bfnm{Subham}\binits{S.}},
  \bauthor{\bsnm{Arriola},~\bfnm{Marianne}\binits{M.}},
  \bauthor{\bsnm{Schiff},~\bfnm{Yair}\binits{Y.}},
  \bauthor{\bsnm{Gokaslan},~\bfnm{Aaron}\binits{A.}},
  \bauthor{\bsnm{Marroquin},~\bfnm{Edgar}\binits{E.}},
  \bauthor{\bsnm{Chiu},~\bfnm{Justin}\binits{J.}},
  \bauthor{\bsnm{Rush},~\bfnm{Alexander}\binits{A.}} \AND
  \bauthor{\bsnm{Kuleshov},~\bfnm{Volodymyr}\binits{V.}}
(\byear{2024}).
\btitle{Simple and effective masked diffusion language models}.
\bjournal{Advances in Neural Information Processing Systems}
\bvolume{37}
\bpages{130136--130184}.
\end{barticle}
\endbibitem

\bibitem{sennrich2015neural}
\begin{binproceedings}[author]
\bauthor{\bsnm{Sennrich},~\bfnm{Rico}\binits{R.}},
  \bauthor{\bsnm{Haddow},~\bfnm{Barry}\binits{B.}} \AND
  \bauthor{\bsnm{Birch},~\bfnm{Alexandra}\binits{A.}}
(\byear{2016}).
\btitle{Neural Machine Translation of Rare Words with Subword Units}.
In \bbooktitle{Proceedings of the 54th Annual Meeting of the Association for
  Computational Linguistics (Volume 1: Long Papers)}
(\beditor{\bfnm{Katrin}\binits{K.}~\bsnm{Erk}} \AND
  \beditor{\bfnm{Noah~A.}\binits{N.~A.}~\bsnm{Smith}}, eds.)
\bpages{1715--1725}.
\bpublisher{Association for Computational Linguistics}, \baddress{Berlin,
  Germany}.
\bdoi{10.18653/v1/P16-1162}
\end{binproceedings}
\endbibitem

\bibitem{shannon1948mathematical}
\begin{barticle}[author]
\bauthor{\bsnm{Shannon},~\bfnm{Claude~E}\binits{C.~E.}}
(\byear{1948}).
\btitle{A mathematical theory of communication}.
\bjournal{The Bell system technical journal}
\bvolume{27}
\bpages{379--423}.
\end{barticle}
\endbibitem

\bibitem{shao2024deepseekmath}
\begin{barticle}[author]
\bauthor{\bsnm{Shao},~\bfnm{Zhihong}\binits{Z.}},
  \bauthor{\bsnm{Wang},~\bfnm{Peiyi}\binits{P.}},
  \bauthor{\bsnm{Zhu},~\bfnm{Qihao}\binits{Q.}},
  \bauthor{\bsnm{Xu},~\bfnm{Runxin}\binits{R.}},
  \bauthor{\bsnm{Song},~\bfnm{Junxiao}\binits{J.}},
  \bauthor{\bsnm{Bi},~\bfnm{Xiao}\binits{X.}},
  \bauthor{\bsnm{Zhang},~\bfnm{Haowei}\binits{H.}},
  \bauthor{\bsnm{Zhang},~\bfnm{Mingchuan}\binits{M.}},
  \bauthor{\bsnm{Li},~\bfnm{YK}\binits{Y.}},
  \bauthor{\bsnm{Wu},~\bfnm{Yang}\binits{Y.}} \betal{et~al.}
(\byear{2024}).
\btitle{Deepseekmath: Pushing the limits of mathematical reasoning in open
  language models}.
\bjournal{arXiv preprint arXiv:2402.03300}.
\end{barticle}
\endbibitem

\bibitem{shi2024simplified}
\begin{barticle}[author]
\bauthor{\bsnm{Shi},~\bfnm{Jiaxin}\binits{J.}},
  \bauthor{\bsnm{Han},~\bfnm{Kehang}\binits{K.}},
  \bauthor{\bsnm{Wang},~\bfnm{Zhe}\binits{Z.}},
  \bauthor{\bsnm{Doucet},~\bfnm{Arnaud}\binits{A.}} \AND
  \bauthor{\bsnm{Titsias},~\bfnm{Michalis}\binits{M.}}
(\byear{2024}).
\btitle{Simplified and generalized masked diffusion for discrete data}.
\bjournal{Advances in neural information processing systems}
\bvolume{37}
\bpages{103131--103167}.
\end{barticle}
\endbibitem

\bibitem{shumailov2024ai}
\begin{barticle}[author]
\bauthor{\bsnm{Shumailov},~\bfnm{Ilia}\binits{I.}},
  \bauthor{\bsnm{Shumaylov},~\bfnm{Zakhar}\binits{Z.}},
  \bauthor{\bsnm{Zhao},~\bfnm{Yiren}\binits{Y.}},
  \bauthor{\bsnm{Papernot},~\bfnm{Nicolas}\binits{N.}},
  \bauthor{\bsnm{Anderson},~\bfnm{Ross}\binits{R.}} \AND
  \bauthor{\bsnm{Gal},~\bfnm{Yarin}\binits{Y.}}
(\byear{2024}).
\btitle{AI models collapse when trained on recursively generated data}.
\bjournal{Nature}
\bvolume{631}
\bpages{755--759}.
\end{barticle}
\endbibitem

\bibitem{song2020score}
\begin{binproceedings}[author]
\bauthor{\bsnm{Song},~\bfnm{Yang}\binits{Y.}},
  \bauthor{\bsnm{Sohl-Dickstein},~\bfnm{Jascha}\binits{J.}},
  \bauthor{\bsnm{Kingma},~\bfnm{Diederik~P}\binits{D.~P.}},
  \bauthor{\bsnm{Kumar},~\bfnm{Abhishek}\binits{A.}},
  \bauthor{\bsnm{Ermon},~\bfnm{Stefano}\binits{S.}} \AND
  \bauthor{\bsnm{Poole},~\bfnm{Ben}\binits{B.}}
(\byear{2021}).
\btitle{Score-based generative modeling through stochastic differential
  equations}.
In \bbooktitle{International Conference on Learning Representations}.
\end{binproceedings}
\endbibitem

\bibitem{srivastava2023beyond}
\begin{barticle}[author]
\bauthor{\bsnm{Srivastava},~\bfnm{Aarohi}\binits{A.}},
  \bauthor{\bsnm{Rastogi},~\bfnm{Abhinav}\binits{A.}},
  \bauthor{\bsnm{Rao},~\bfnm{Abhishek}\binits{A.}},
  \bauthor{\bsnm{Shoeb},~\bfnm{Abu Awal~Md}\binits{A.~A.~M.}},
  \bauthor{\bsnm{Abid},~\bfnm{Abubakar}\binits{A.}},
  \bauthor{\bsnm{Fisch},~\bfnm{Adam}\binits{A.}},
  \bauthor{\bsnm{Brown},~\bfnm{Adam~R}\binits{A.~R.}},
  \bauthor{\bsnm{Santoro},~\bfnm{Adam}\binits{A.}},
  \bauthor{\bsnm{Gupta},~\bfnm{Aditya}\binits{A.}},
  \bauthor{\bsnm{Garriga-Alonso},~\bfnm{Adri{\`a}}\binits{A.}} \betal{et~al.}
(\byear{2023}).
\btitle{Beyond the imitation game: Quantifying and extrapolating the
  capabilities of language models}.
\bjournal{Transactions on machine learning research}.
\end{barticle}
\endbibitem

\bibitem{stiennon2020learning}
\begin{barticle}[author]
\bauthor{\bsnm{Stiennon},~\bfnm{Nisan}\binits{N.}},
  \bauthor{\bsnm{Ouyang},~\bfnm{Long}\binits{L.}},
  \bauthor{\bsnm{Wu},~\bfnm{Jeffrey}\binits{J.}},
  \bauthor{\bsnm{Ziegler},~\bfnm{Daniel}\binits{D.}},
  \bauthor{\bsnm{Lowe},~\bfnm{Ryan}\binits{R.}},
  \bauthor{\bsnm{Voss},~\bfnm{Chelsea}\binits{C.}},
  \bauthor{\bsnm{Radford},~\bfnm{Alec}\binits{A.}},
  \bauthor{\bsnm{Amodei},~\bfnm{Dario}\binits{D.}} \AND
  \bauthor{\bsnm{Christiano},~\bfnm{Paul~F}\binits{P.~F.}}
(\byear{2020}).
\btitle{Learning to summarize with human feedback}.
\bjournal{Advances in neural information processing systems}
\bvolume{33}
\bpages{3008--3021}.
\end{barticle}
\endbibitem

\bibitem{su2024roformer}
\begin{barticle}[author]
\bauthor{\bsnm{Su},~\bfnm{Jianlin}\binits{J.}},
  \bauthor{\bsnm{Ahmed},~\bfnm{Murtadha}\binits{M.}},
  \bauthor{\bsnm{Lu},~\bfnm{Yu}\binits{Y.}},
  \bauthor{\bsnm{Pan},~\bfnm{Shengfeng}\binits{S.}},
  \bauthor{\bsnm{Bo},~\bfnm{Wen}\binits{W.}} \AND
  \bauthor{\bsnm{Liu},~\bfnm{Yunfeng}\binits{Y.}}
(\byear{2024}).
\btitle{Ro{F}ormer: Enhanced transformer with rotary position embedding}.
\bjournal{Neurocomputing}
\bvolume{568}
\bpages{127063}.
\end{barticle}
\endbibitem

\bibitem{sun2024generative}
\begin{binproceedings}[author]
\bauthor{\bsnm{Sun},~\bfnm{Quan}\binits{Q.}},
  \bauthor{\bsnm{Cui},~\bfnm{Yufeng}\binits{Y.}},
  \bauthor{\bsnm{Zhang},~\bfnm{Xiaosong}\binits{X.}},
  \bauthor{\bsnm{Zhang},~\bfnm{Fan}\binits{F.}},
  \bauthor{\bsnm{Yu},~\bfnm{Qiying}\binits{Q.}},
  \bauthor{\bsnm{Wang},~\bfnm{Yueze}\binits{Y.}},
  \bauthor{\bsnm{Rao},~\bfnm{Yongming}\binits{Y.}},
  \bauthor{\bsnm{Liu},~\bfnm{Jingjing}\binits{J.}},
  \bauthor{\bsnm{Huang},~\bfnm{Tiejun}\binits{T.}} \AND
  \bauthor{\bsnm{Wang},~\bfnm{Xinlong}\binits{X.}}
(\byear{2024}).
\btitle{Generative multimodal models are in-context learners}.
In \bbooktitle{Proceedings of the IEEE/CVF Conference on Computer Vision and
  Pattern Recognition}
\bpages{14398--14409}.
\end{binproceedings}
\endbibitem

\bibitem{suzgun2023challenging}
\begin{binproceedings}[author]
\bauthor{\bsnm{Suzgun},~\bfnm{Mirac}\binits{M.}},
  \bauthor{\bsnm{Scales},~\bfnm{Nathan}\binits{N.}},
  \bauthor{\bsnm{Sch{\"a}rli},~\bfnm{Nathanael}\binits{N.}},
  \bauthor{\bsnm{Gehrmann},~\bfnm{Sebastian}\binits{S.}},
  \bauthor{\bsnm{Tay},~\bfnm{Yi}\binits{Y.}},
  \bauthor{\bsnm{Chung},~\bfnm{Hyung~Won}\binits{H.~W.}},
  \bauthor{\bsnm{Chowdhery},~\bfnm{Aakanksha}\binits{A.}},
  \bauthor{\bsnm{Le},~\bfnm{Quoc}\binits{Q.}},
  \bauthor{\bsnm{Chi},~\bfnm{Ed}\binits{E.}},
  \bauthor{\bsnm{Zhou},~\bfnm{Denny}\binits{D.}} \betal{et~al.}
(\byear{2023}).
\btitle{Challenging big-bench tasks and whether chain-of-thought can solve
  them}.
In \bbooktitle{Findings of the Association for Computational Linguistics: ACL
  2023}
\bpages{13003--13051}.
\end{binproceedings}
\endbibitem

\bibitem{team2023gemini}
\begin{barticle}[author]
\bauthor{\bsnm{Team},~\bfnm{Gemini}\binits{G.}},
  \bauthor{\bsnm{Anil},~\bfnm{Rohan}\binits{R.}},
  \bauthor{\bsnm{Borgeaud},~\bfnm{Sebastian}\binits{S.}},
  \bauthor{\bsnm{Alayrac},~\bfnm{Jean-Baptiste}\binits{J.-B.}},
  \bauthor{\bsnm{Yu},~\bfnm{Jiahui}\binits{J.}},
  \bauthor{\bsnm{Soricut},~\bfnm{Radu}\binits{R.}},
  \bauthor{\bsnm{Schalkwyk},~\bfnm{Johan}\binits{J.}},
  \bauthor{\bsnm{Dai},~\bfnm{Andrew~M}\binits{A.~M.}},
  \bauthor{\bsnm{Hauth},~\bfnm{Anja}\binits{A.}},
  \bauthor{\bsnm{Millican},~\bfnm{Katie}\binits{K.}} \betal{et~al.}
(\byear{2023}).
\btitle{Gemini: a family of highly capable multimodal models}.
\bjournal{arXiv preprint arXiv:2312.11805}.
\end{barticle}
\endbibitem

\bibitem{team2025kimi}
\begin{barticle}[author]
\bauthor{\bsnm{Team},~\bfnm{Kimi}\binits{K.}},
  \bauthor{\bsnm{Zhang},~\bfnm{Yu}\binits{Y.}},
  \bauthor{\bsnm{Lin},~\bfnm{Zongyu}\binits{Z.}},
  \bauthor{\bsnm{Yao},~\bfnm{Xingcheng}\binits{X.}},
  \bauthor{\bsnm{Hu},~\bfnm{Jiaxi}\binits{J.}},
  \bauthor{\bsnm{Meng},~\bfnm{Fanqing}\binits{F.}},
  \bauthor{\bsnm{Liu},~\bfnm{Chengyin}\binits{C.}},
  \bauthor{\bsnm{Men},~\bfnm{Xin}\binits{X.}},
  \bauthor{\bsnm{Yang},~\bfnm{Songlin}\binits{S.}},
  \bauthor{\bsnm{Li},~\bfnm{Zhiyuan}\binits{Z.}} \betal{et~al.}
(\byear{2025}).
\btitle{Kimi Linear: An Expressive, Efficient Attention Architecture}.
\bjournal{arXiv preprint arXiv:2510.26692}.
\end{barticle}
\endbibitem

\bibitem{nllb2024scaling}
\begin{barticle}[author]
\bauthor{\bsnm{Team},~\bfnm{NLLB}\binits{N.}}
(\byear{2024}).
\btitle{Scaling neural machine translation to 200 languages}.
\bjournal{Nature}
\bvolume{630}
\bpages{841--846}.
\end{barticle}
\endbibitem

\bibitem{MathWorks_ChooseODE}
\begin{bmisc}[author]
\bauthor{\bsnm{{The MathWorks, Inc. }}}
(\byear{2025}).
\btitle{Choose an ODE Solver — MATLAB \& Simulink}.
\bhowpublished{\url{https://www.mathworks.com/help/matlab/math/choose-an-ode-solver.html}}.
\bnote{Accessed: 2025-11-05}.
\end{bmisc}
\endbibitem

\bibitem{touvron2023llama}
\begin{barticle}[author]
\bauthor{\bsnm{Touvron},~\bfnm{Hugo}\binits{H.}},
  \bauthor{\bsnm{Lavril},~\bfnm{Thibaut}\binits{T.}},
  \bauthor{\bsnm{Izacard},~\bfnm{Gautier}\binits{G.}},
  \bauthor{\bsnm{Martinet},~\bfnm{Xavier}\binits{X.}},
  \bauthor{\bsnm{Lachaux},~\bfnm{Marie-Anne}\binits{M.-A.}},
  \bauthor{\bsnm{Lacroix},~\bfnm{Timoth{\'e}e}\binits{T.}},
  \bauthor{\bsnm{Rozi{\`e}re},~\bfnm{Baptiste}\binits{B.}},
  \bauthor{\bsnm{Goyal},~\bfnm{Naman}\binits{N.}},
  \bauthor{\bsnm{Hambro},~\bfnm{Eric}\binits{E.}},
  \bauthor{\bsnm{Azhar},~\bfnm{Faisal}\binits{F.}} \betal{et~al.}
(\byear{2023}).
\btitle{{LLaMA}: Open and efficient foundation language models}.
\bjournal{arXiv preprint arXiv:2302.13971}.
\end{barticle}
\endbibitem

\bibitem{turner2023introduction}
\begin{barticle}[author]
\bauthor{\bsnm{Turner},~\bfnm{Richard~E}\binits{R.~E.}}
(\byear{2023}).
\btitle{An introduction to transformers}.
\bjournal{arXiv preprint arXiv:2304.10557}.
\end{barticle}
\endbibitem

\bibitem{vaswani2017attention}
\begin{binproceedings}[author]
\bauthor{\bsnm{Vaswani},~\bfnm{Ashish}\binits{A.}},
  \bauthor{\bsnm{Shazeer},~\bfnm{Noam}\binits{N.}},
  \bauthor{\bsnm{Parmar},~\bfnm{Niki}\binits{N.}},
  \bauthor{\bsnm{Uszkoreit},~\bfnm{Jakob}\binits{J.}},
  \bauthor{\bsnm{Jones},~\bfnm{Llion}\binits{L.}},
  \bauthor{\bsnm{Gomez},~\bfnm{Aidan~N}\binits{A.~N.}},
  \bauthor{\bsnm{Kaiser},~\bfnm{\L~ukasz}\binits{L.~u.}} \AND
  \bauthor{\bsnm{Polosukhin},~\bfnm{Illia}\binits{I.}}
(\byear{2017}).
\btitle{Attention is all you need}.
In \bbooktitle{Advances in Neural Information Processing Systems}
(\beditor{\bfnm{I.}\binits{I.}~\bsnm{Guyon}},
  \beditor{\bfnm{U.~Von}\binits{U.~V.}~\bsnm{Luxburg}},
  \beditor{\bfnm{S.}\binits{S.}~\bsnm{Bengio}},
  \beditor{\bfnm{H.}\binits{H.}~\bsnm{Wallach}},
  \beditor{\bfnm{R.}\binits{R.}~\bsnm{Fergus}},
  \beditor{\bfnm{S.}\binits{S.}~\bsnm{Vishwanathan}} \AND
  \beditor{\bfnm{R.}\binits{R.}~\bsnm{Garnett}}, eds.)
\bvolume{30}.
\bpublisher{Curran Associates, Inc.}
\end{binproceedings}
\endbibitem

\bibitem{villalobos2022will}
\begin{barticle}[author]
\bauthor{\bsnm{Villalobos},~\bfnm{Pablo}\binits{P.}},
  \bauthor{\bsnm{Ho},~\bfnm{Anson}\binits{A.}},
  \bauthor{\bsnm{Sevilla},~\bfnm{Jaime}\binits{J.}},
  \bauthor{\bsnm{Besiroglu},~\bfnm{Tamay}\binits{T.}},
  \bauthor{\bsnm{Heim},~\bfnm{Lennart}\binits{L.}} \AND
  \bauthor{\bsnm{Hobbhahn},~\bfnm{Marius}\binits{M.}}
(\byear{2022}).
\btitle{Will we run out of data? Limits of {LLM} scaling based on
  human-generated data}.
\bjournal{arXiv preprint arXiv:2211.04325}.
\end{barticle}
\endbibitem

\bibitem{waleffe2024empirical}
\begin{barticle}[author]
\bauthor{\bsnm{Waleffe},~\bfnm{Roger}\binits{R.}},
  \bauthor{\bsnm{Byeon},~\bfnm{Wonmin}\binits{W.}},
  \bauthor{\bsnm{Riach},~\bfnm{Duncan}\binits{D.}},
  \bauthor{\bsnm{Norick},~\bfnm{Brandon}\binits{B.}},
  \bauthor{\bsnm{Korthikanti},~\bfnm{Vijay}\binits{V.}},
  \bauthor{\bsnm{Dao},~\bfnm{Tri}\binits{T.}},
  \bauthor{\bsnm{Gu},~\bfnm{Albert}\binits{A.}},
  \bauthor{\bsnm{Hatamizadeh},~\bfnm{Ali}\binits{A.}},
  \bauthor{\bsnm{Singh},~\bfnm{Sudhakar}\binits{S.}},
  \bauthor{\bsnm{Narayanan},~\bfnm{Deepak}\binits{D.}} \betal{et~al.}
(\byear{2024}).
\btitle{An empirical study of mamba-based language models}.
\bjournal{arXiv preprint arXiv:2406.07887}.
\end{barticle}
\endbibitem

\bibitem{wan2023debias}
\begin{barticle}[author]
\bauthor{\bsnm{Wan},~\bfnm{Zhong~Yi}\binits{Z.~Y.}},
  \bauthor{\bsnm{Baptista},~\bfnm{Ricardo}\binits{R.}},
  \bauthor{\bsnm{Boral},~\bfnm{Anudhyan}\binits{A.}},
  \bauthor{\bsnm{Chen},~\bfnm{Yi-Fan}\binits{Y.-F.}},
  \bauthor{\bsnm{Anderson},~\bfnm{John}\binits{J.}},
  \bauthor{\bsnm{Sha},~\bfnm{Fei}\binits{F.}} \AND
  \bauthor{\bsnm{Zepeda-N{\'u}{\~n}ez},~\bfnm{Leonardo}\binits{L.}}
(\byear{2023}).
\btitle{Debias coarsely, sample conditionally: Statistical downscaling through
  optimal transport and probabilistic diffusion models}.
\bjournal{Advances in Neural Information Processing Systems}
\bvolume{36}
\bpages{47749--47763}.
\end{barticle}
\endbibitem

\bibitem{wang2022self}
\begin{barticle}[author]
\bauthor{\bsnm{Wang},~\bfnm{Xuezhi}\binits{X.}},
  \bauthor{\bsnm{Wei},~\bfnm{Jason}\binits{J.}},
  \bauthor{\bsnm{Schuurmans},~\bfnm{Dale}\binits{D.}},
  \bauthor{\bsnm{Le},~\bfnm{Quoc}\binits{Q.}},
  \bauthor{\bsnm{Chi},~\bfnm{Ed}\binits{E.}},
  \bauthor{\bsnm{Narang},~\bfnm{Sharan}\binits{S.}},
  \bauthor{\bsnm{Chowdhery},~\bfnm{Aakanksha}\binits{A.}} \AND
  \bauthor{\bsnm{Zhou},~\bfnm{Denny}\binits{D.}}
(\byear{2022}).
\btitle{Self-consistency improves chain of thought reasoning in language
  models}.
\bjournal{arXiv preprint arXiv:2203.11171}.
\end{barticle}
\endbibitem

\bibitem{wangself}
\begin{binproceedings}[author]
\bauthor{\bsnm{Wang},~\bfnm{Xuezhi}\binits{X.}},
  \bauthor{\bsnm{Wei},~\bfnm{Jason}\binits{J.}},
  \bauthor{\bsnm{Schuurmans},~\bfnm{Dale}\binits{D.}},
  \bauthor{\bsnm{Le},~\bfnm{Quoc~V}\binits{Q.~V.}},
  \bauthor{\bsnm{Chi},~\bfnm{Ed~H}\binits{E.~H.}},
  \bauthor{\bsnm{Narang},~\bfnm{Sharan}\binits{S.}},
  \bauthor{\bsnm{Chowdhery},~\bfnm{Aakanksha}\binits{A.}} \AND
  \bauthor{\bsnm{Zhou},~\bfnm{Denny}\binits{D.}}
(\byear{2023}).
\btitle{Self-Consistency Improves Chain of Thought Reasoning in Language
  Models}.
In \bbooktitle{The Eleventh International Conference on Learning
  Representations}.
\end{binproceedings}
\endbibitem

\bibitem{wang2024chain}
\begin{barticle}[author]
\bauthor{\bsnm{Wang},~\bfnm{Xuezhi}\binits{X.}} \AND
  \bauthor{\bsnm{Zhou},~\bfnm{Denny}\binits{D.}}
(\byear{2024}).
\btitle{Chain-of-thought reasoning without prompting}.
\bjournal{Advances in Neural Information Processing Systems}
\bvolume{37}
\bpages{66383--66409}.
\end{barticle}
\endbibitem

\bibitem{wei2022chain}
\begin{barticle}[author]
\bauthor{\bsnm{Wei},~\bfnm{Jason}\binits{J.}},
  \bauthor{\bsnm{Wang},~\bfnm{Xuezhi}\binits{X.}},
  \bauthor{\bsnm{Schuurmans},~\bfnm{Dale}\binits{D.}},
  \bauthor{\bsnm{Bosma},~\bfnm{Maarten}\binits{M.}},
  \bauthor{\bsnm{Xia},~\bfnm{Fei}\binits{F.}},
  \bauthor{\bsnm{Chi},~\bfnm{Ed}\binits{E.}},
  \bauthor{\bsnm{Le},~\bfnm{Quoc~V}\binits{Q.~V.}},
  \bauthor{\bsnm{Zhou},~\bfnm{Denny}\binits{D.}} \betal{et~al.}
(\byear{2022}).
\btitle{Chain-of-thought prompting elicits reasoning in large language models}.
\bjournal{Advances in neural information processing systems}
\bvolume{35}
\bpages{24824--24837}.
\end{barticle}
\endbibitem

\bibitem{wu2024next}
\begin{binproceedings}[author]
\bauthor{\bsnm{Wu},~\bfnm{Shengqiong}\binits{S.}},
  \bauthor{\bsnm{Fei},~\bfnm{Hao}\binits{H.}},
  \bauthor{\bsnm{Qu},~\bfnm{Leigang}\binits{L.}},
  \bauthor{\bsnm{Ji},~\bfnm{Wei}\binits{W.}} \AND
  \bauthor{\bsnm{Chua},~\bfnm{Tat-Seng}\binits{T.-S.}}
(\byear{2024}).
\btitle{Next-{GPT}: Any-to-any multimodal {LLM}}.
In \bbooktitle{Forty-first International Conference on Machine Learning}.
\end{binproceedings}
\endbibitem

\bibitem{xiong2020layer}
\begin{binproceedings}[author]
\bauthor{\bsnm{Xiong},~\bfnm{Ruibin}\binits{R.}},
  \bauthor{\bsnm{Yang},~\bfnm{Yunchang}\binits{Y.}},
  \bauthor{\bsnm{He},~\bfnm{Di}\binits{D.}},
  \bauthor{\bsnm{Zheng},~\bfnm{Kai}\binits{K.}},
  \bauthor{\bsnm{Zheng},~\bfnm{Shuxin}\binits{S.}},
  \bauthor{\bsnm{Xing},~\bfnm{Chen}\binits{C.}},
  \bauthor{\bsnm{Zhang},~\bfnm{Huishuai}\binits{H.}},
  \bauthor{\bsnm{Lan},~\bfnm{Yanyan}\binits{Y.}},
  \bauthor{\bsnm{Wang},~\bfnm{Liwei}\binits{L.}} \AND
  \bauthor{\bsnm{Liu},~\bfnm{Tieyan}\binits{T.}}
(\byear{2020}).
\btitle{On layer normalization in the transformer architecture}.
In \bbooktitle{International conference on machine learning}
\bpages{10524--10533}.
\bpublisher{PMLR}.
\end{binproceedings}
\endbibitem

\bibitem{xu2023llm}
\begin{barticle}[author]
\bauthor{\bsnm{Xu},~\bfnm{Xilie}\binits{X.}},
  \bauthor{\bsnm{Kong},~\bfnm{Keyi}\binits{K.}},
  \bauthor{\bsnm{Liu},~\bfnm{Ning}\binits{N.}},
  \bauthor{\bsnm{Cui},~\bfnm{Lizhen}\binits{L.}},
  \bauthor{\bsnm{Wang},~\bfnm{Di}\binits{D.}},
  \bauthor{\bsnm{Zhang},~\bfnm{Jingfeng}\binits{J.}} \AND
  \bauthor{\bsnm{Kankanhalli},~\bfnm{Mohan}\binits{M.}}
(\byear{2023}).
\btitle{An {LLM} can fool itself: A prompt-based adversarial attack}.
\bjournal{arXiv preprint arXiv:2310.13345}.
\end{barticle}
\endbibitem

\bibitem{yao2023tree}
\begin{barticle}[author]
\bauthor{\bsnm{Yao},~\bfnm{Shunyu}\binits{S.}},
  \bauthor{\bsnm{Yu},~\bfnm{Dian}\binits{D.}},
  \bauthor{\bsnm{Zhao},~\bfnm{Jeffrey}\binits{J.}},
  \bauthor{\bsnm{Shafran},~\bfnm{Izhak}\binits{I.}},
  \bauthor{\bsnm{Griffiths},~\bfnm{Thomas~L}\binits{T.~L.}},
  \bauthor{\bsnm{Cao},~\bfnm{Yuan}\binits{Y.}} \AND
  \bauthor{\bsnm{Narasimhan},~\bfnm{Karthik}\binits{K.}}
(\byear{2023}).
\btitle{Tree of thoughts: Deliberate problem solving with large language
  models, 2023}.
\bjournal{URL https://arxiv. org/abs/2305.10601}
\bvolume{3}
\bpages{1}.
\end{barticle}
\endbibitem

\bibitem{zehui2019dropattention}
\begin{barticle}[author]
\bauthor{\bsnm{Zehui},~\bfnm{Lin}\binits{L.}},
  \bauthor{\bsnm{Liu},~\bfnm{Pengfei}\binits{P.}},
  \bauthor{\bsnm{Huang},~\bfnm{Luyao}\binits{L.}},
  \bauthor{\bsnm{Chen},~\bfnm{Junkun}\binits{J.}},
  \bauthor{\bsnm{Qiu},~\bfnm{Xipeng}\binits{X.}} \AND
  \bauthor{\bsnm{Huang},~\bfnm{Xuanjing}\binits{X.}}
(\byear{2019}).
\btitle{DropAttention: A regularization method for fully-connected
  self-attention networks}.
\bjournal{arXiv preprint arXiv:1907.11065}.
\end{barticle}
\endbibitem

\bibitem{zhan2024anygpt}
\begin{barticle}[author]
\bauthor{\bsnm{Zhan},~\bfnm{Jun}\binits{J.}},
  \bauthor{\bsnm{Dai},~\bfnm{Junqi}\binits{J.}},
  \bauthor{\bsnm{Ye},~\bfnm{Jiasheng}\binits{J.}},
  \bauthor{\bsnm{Zhou},~\bfnm{Yunhua}\binits{Y.}},
  \bauthor{\bsnm{Zhang},~\bfnm{Dong}\binits{D.}},
  \bauthor{\bsnm{Liu},~\bfnm{Zhigeng}\binits{Z.}},
  \bauthor{\bsnm{Zhang},~\bfnm{Xin}\binits{X.}},
  \bauthor{\bsnm{Yuan},~\bfnm{Ruibin}\binits{R.}},
  \bauthor{\bsnm{Zhang},~\bfnm{Ge}\binits{G.}},
  \bauthor{\bsnm{Li},~\bfnm{Linyang}\binits{L.}} \betal{et~al.}
(\byear{2024}).
\btitle{Any{GPT}: Unified multimodal {LLM} with discrete sequence modeling}.
\bjournal{arXiv preprint arXiv:2402.12226}.
\end{barticle}
\endbibitem

\bibitem{zhang2019root}
\begin{barticle}[author]
\bauthor{\bsnm{Zhang},~\bfnm{Biao}\binits{B.}} \AND
  \bauthor{\bsnm{Sennrich},~\bfnm{Rico}\binits{R.}}
(\byear{2019}).
\btitle{Root mean square layer normalization}.
\bjournal{Advances in neural information processing systems}
\bvolume{32}.
\end{barticle}
\endbibitem

\bibitem{zhang2020pegasus}
\begin{binproceedings}[author]
\bauthor{\bsnm{Zhang},~\bfnm{Jingqing}\binits{J.}},
  \bauthor{\bsnm{Zhao},~\bfnm{Yao}\binits{Y.}},
  \bauthor{\bsnm{Saleh},~\bfnm{Mohammad}\binits{M.}} \AND
  \bauthor{\bsnm{Liu},~\bfnm{Peter}\binits{P.}}
(\byear{2020}).
\btitle{Pegasus: Pre-training with extracted gap-sentences for abstractive
  summarization}.
In \bbooktitle{International conference on machine learning}
\bpages{11328--11339}.
\bpublisher{PMLR}.
\end{binproceedings}
\endbibitem

\bibitem{zhang2025flexible}
\begin{barticle}[author]
\bauthor{\bsnm{Zhang},~\bfnm{Ruixiang}\binits{R.}},
  \bauthor{\bsnm{Zhai},~\bfnm{Shuangfei}\binits{S.}},
  \bauthor{\bsnm{Gu},~\bfnm{Jiatao}\binits{J.}},
  \bauthor{\bsnm{Zhang},~\bfnm{Yizhe}\binits{Y.}},
  \bauthor{\bsnm{Zheng},~\bfnm{Huangjie}\binits{H.}},
  \bauthor{\bsnm{Chen},~\bfnm{Tianrong}\binits{T.}},
  \bauthor{\bsnm{Bautista},~\bfnm{Miguel~Angel}\binits{M.~A.}},
  \bauthor{\bsnm{Susskind},~\bfnm{Josh}\binits{J.}} \AND
  \bauthor{\bsnm{Jaitly},~\bfnm{Navdeep}\binits{N.}}
(\byear{2025}).
\btitle{Flexible Language Modeling in Continuous Space with Transformer-based
  Autoregressive Flows}.
\bjournal{arXiv preprint arXiv:2507.00425}.
\end{barticle}
\endbibitem

\bibitem{zhang2024benchmarking}
\begin{barticle}[author]
\bauthor{\bsnm{Zhang},~\bfnm{Tianyi}\binits{T.}},
  \bauthor{\bsnm{Ladhak},~\bfnm{Faisal}\binits{F.}},
  \bauthor{\bsnm{Durmus},~\bfnm{Esin}\binits{E.}},
  \bauthor{\bsnm{Liang},~\bfnm{Percy}\binits{P.}},
  \bauthor{\bsnm{McKeown},~\bfnm{Kathleen}\binits{K.}} \AND
  \bauthor{\bsnm{Hashimoto},~\bfnm{Tatsunori~B}\binits{T.~B.}}
(\byear{2024}).
\btitle{Benchmarking large language models for news summarization}.
\bjournal{Transactions of the Association for Computational Linguistics}
\bvolume{12}
\bpages{39--57}.
\end{barticle}
\endbibitem

\bibitem{zhao2023survey}
\begin{barticle}[author]
\bauthor{\bsnm{Zhao},~\bfnm{Wayne~Xin}\binits{W.~X.}},
  \bauthor{\bsnm{Zhou},~\bfnm{Kun}\binits{K.}},
  \bauthor{\bsnm{Li},~\bfnm{Junyi}\binits{J.}},
  \bauthor{\bsnm{Tang},~\bfnm{Tianyi}\binits{T.}},
  \bauthor{\bsnm{Wang},~\bfnm{Xiaolei}\binits{X.}},
  \bauthor{\bsnm{Hou},~\bfnm{Yupeng}\binits{Y.}},
  \bauthor{\bsnm{Min},~\bfnm{Yingqian}\binits{Y.}},
  \bauthor{\bsnm{Zhang},~\bfnm{Beichen}\binits{B.}},
  \bauthor{\bsnm{Zhang},~\bfnm{Junjie}\binits{J.}},
  \bauthor{\bsnm{Dong},~\bfnm{Zican}\binits{Z.}} \betal{et~al.}
(\byear{2023}).
\btitle{A survey of large language models}.
\bjournal{arXiv preprint arXiv:2303.18223}.
\end{barticle}
\endbibitem

\bibitem{zhou2020scheduled}
\begin{binproceedings}[author]
\bauthor{\bsnm{Zhou},~\bfnm{Wangchunshu}\binits{W.}},
  \bauthor{\bsnm{Ge},~\bfnm{Tao}\binits{T.}},
  \bauthor{\bsnm{Wei},~\bfnm{Furu}\binits{F.}},
  \bauthor{\bsnm{Zhou},~\bfnm{Ming}\binits{M.}} \AND
  \bauthor{\bsnm{Xu},~\bfnm{Ke}\binits{K.}}
(\byear{2020}).
\btitle{Scheduled DropHead: A Regularization Method for Transformer Models}.
In \bbooktitle{Findings of the Association for Computational Linguistics: EMNLP
  2020}
\bpages{1971--1980}.
\end{binproceedings}
\endbibitem

\bibitem{zhu2025transformers}
\begin{binproceedings}[author]
\bauthor{\bsnm{Zhu},~\bfnm{Jiachen}\binits{J.}},
  \bauthor{\bsnm{Chen},~\bfnm{Xinlei}\binits{X.}},
  \bauthor{\bsnm{He},~\bfnm{Kaiming}\binits{K.}},
  \bauthor{\bsnm{LeCun},~\bfnm{Yann}\binits{Y.}} \AND
  \bauthor{\bsnm{Liu},~\bfnm{Zhuang}\binits{Z.}}
(\byear{2025}).
\btitle{Transformers without normalization}.
In \bbooktitle{Proceedings of the Computer Vision and Pattern Recognition
  Conference}
\bpages{14901--14911}.
\end{binproceedings}
\endbibitem

\bibitem{zhu2023promptrobust}
\begin{binproceedings}[author]
\bauthor{\bsnm{Zhu},~\bfnm{Kaijie}\binits{K.}},
  \bauthor{\bsnm{Wang},~\bfnm{Jindong}\binits{J.}},
  \bauthor{\bsnm{Zhou},~\bfnm{Jiaheng}\binits{J.}},
  \bauthor{\bsnm{Wang},~\bfnm{Zichen}\binits{Z.}},
  \bauthor{\bsnm{Chen},~\bfnm{Hao}\binits{H.}},
  \bauthor{\bsnm{Wang},~\bfnm{Yidong}\binits{Y.}},
  \bauthor{\bsnm{Yang},~\bfnm{Linyi}\binits{L.}},
  \bauthor{\bsnm{Ye},~\bfnm{Wei}\binits{W.}},
  \bauthor{\bsnm{Zhang},~\bfnm{Yue}\binits{Y.}},
  \bauthor{\bsnm{Gong},~\bfnm{Neil}\binits{N.}} \betal{et~al.}
(\byear{2023}).
\btitle{PromptRobust: Towards evaluating the robustness of large language
  models on adversarial prompts}.
In \bbooktitle{Proceedings of the 1st ACM workshop on large AI systems and
  models with privacy and safety analysis}
\bpages{57--68}.
\end{binproceedings}
\endbibitem

\bibitem{ziegler2019fine}
\begin{barticle}[author]
\bauthor{\bsnm{Ziegler},~\bfnm{Daniel~M}\binits{D.~M.}},
  \bauthor{\bsnm{Stiennon},~\bfnm{Nisan}\binits{N.}},
  \bauthor{\bsnm{Wu},~\bfnm{Jeffrey}\binits{J.}},
  \bauthor{\bsnm{Brown},~\bfnm{Tom~B}\binits{T.~B.}},
  \bauthor{\bsnm{Radford},~\bfnm{Alec}\binits{A.}},
  \bauthor{\bsnm{Amodei},~\bfnm{Dario}\binits{D.}},
  \bauthor{\bsnm{Christiano},~\bfnm{Paul}\binits{P.}} \AND
  \bauthor{\bsnm{Irving},~\bfnm{Geoffrey}\binits{G.}}
(\byear{2019}).
\btitle{Fine-tuning language models from human preferences}.
\bjournal{arXiv preprint arXiv:1909.08593}.
\end{barticle}
\endbibitem

\bibitem{ziv2003universal}
\begin{barticle}[author]
\bauthor{\bsnm{Ziv},~\bfnm{Jacob}\binits{J.}} \AND
  \bauthor{\bsnm{Lempel},~\bfnm{Abraham}\binits{A.}}
(\byear{2003}).
\btitle{A universal algorithm for sequential data compression}.
\bjournal{IEEE Transactions on information theory}
\bvolume{23}
\bpages{337--343}.
\end{barticle}
\endbibitem

\end{thebibliography}

\appendix

\section{Derivatives} \label{sec:deriv}

The next-token probability model defined in \eqref{eq:sumz} is parameterized by $\theta=(\varthetaa,\varthetae).$ The objective function $\Loss^N(\theta)$, given in \eqref{eq:lossN}, is minimized using gradient descent-based methods; \rev{thus we need to be able to differentiate the model with respect to its parameters. The following two subsections describe some details relating to the computation of the derivatives for $\varthetae$ and $\varthetaa$ in Subsections~\ref{subsec:varthetae_derivative} and~\ref{subsec:varthetaa_derivative} respectively.} These details are useful to understand how gradient-based training is implemented. The derivatives may also be useful beyond training, for example, to understand the sensitivity of the trained model to perturbations of its inputs.

\subsection{$\varthetae$ Derivative} \label{subsec:varthetae_derivative}

\rev{From  Definition \ref{def:thetam} we have that 
$\phi: \Voc \to \R^{\de}$ is defined by matrix
$$\varthetae \in \R^{\de \times |\Voc|}, \quad \varthetae =(\varthetae_1, \cdots, \varthetae_{|\Voc|}), \,\varthetae_m \in \R^\de$$
where $\varthetae_m \in \R^\de$ for all $m \in \Voc;$ then $\phi(v)=\varthetae_v$. Definition~\ref{def:phis} extends $\phi$ to act on sequences: for  sequence $f \in \F^M$, sequence $h=\phi \circ f \in \F^M_e$ is given by
$$h = (h_1,\dots,h_M) \in \F_e^M, \quad h_m = \phi(f_m) \in \R^{\de}.$$
We have chosen to label the elements in $\Voc$ by making the identification $\Voc=\{1, \cdots, |\Voc|\}=\Se^{|\Voc|}.$}

Recall the approximation \eqref{eq:sumz} for the learnable probability distribution on sequences in token-space, noting that it is built from $\qem\bigl(\cdot\,|\cdot\,; \varthetaa\bigr): \F^{(m-1)}_e \to \mathcal{P}(\Voc).$ Sequences in $\F^{(m-1)}_e$ (Euclidean space valued) are constructed from sequences in $\F^{(m-1)}$ (token space valued) by application of map $\phi$ defined by parameter $\varthetae \in \R^{\de \times |\Voc|}.$ In particular, $h \in \F^{(m-1)}_e$ is found from $f \in \F^{(m-1)}$ via $h=\phi \circ f.$  We describe how to differentiate $\qem$ with respect to this dependence on $\varthetae.$ Noting that $\mathcal{P}(\Voc) \subset \R^{|\Voc|}$, we abstract this question to consider differentiating maps from $\R^{\de \times |\Voc|} \times \F^M$ into $\R^{|\Voc|}$ with respect to their first argument.

Now assume that we are given function $\G: \F^M_e \to \R^{|\Voc|}$ and let $D_m \G(h)= \partial_{h_m} \G(h).$
Note that, for every $h \in \F^M_e$, $D_m \G(h) \in \cL(\R^{\de} ; \R^{|\Voc|})  \cong \R^{|\Voc| \times \de}.$ For any function $\He: \R^{\de \times |\Voc|} \to R_{\He}$, where $R_{\He}$ denotes the
range of $\He$, define $D^{(e)}\He\bigl(\varthetae) = \partial_{\varthetae}\He\bigl(\varthetae) \in \cL(\R^{\de \times |\Voc|},R_{\He}).$ In particular we define $\He: \R^{\de \times |\Voc|} \times \F^M \to \R^{|\Voc|}$
by $$\He\bigl(\varthetae;f\bigr):=\G\bigl(\phi \circ f).$$
This abstract construction mimics the dependence of $\qem$ on $\phi$, and hence on $\varthetae.$
Define also $D^{(e)}\He\bigl(\varthetae,f\bigr) = \partial_{\varthetae}\He\bigl(\varthetae,f\bigr).$
For every $\bigl(\varthetae,f\bigr) \in \R^{\de \times |\Voc|} \times \F^M$ we have
 $D^{(e)}\He\bigl(\varthetae,f\bigr) \ \in \cL(\R^{\de \times |\Voc|},\R^{|\Voc|})  \cong \R^{|\Voc| \times |\Voc|\de}.$ \rev{The following lemma provides an expression for the derivative of $\He$ with respect to $\varthetae$.}

 \begin{lemma} \label{lemma:Gmap_der} Viewing $D_m \G(h)$ as an element in $\R^{|\Voc| \times \de}$ and viewing
 $D^{(e)} \He\bigl(\varthetae;f\bigr)$ as an element in $\R^{|\Voc| \times |\Voc|\de}$, we obtain
\begin{align*}
         D^{(e)} \He\bigl(\varthetae;f\bigr) = \begin{pmatrix} \sum_{m=1}^M D_m \G(\phi \circ f.) \delta_{1,f_{m}}, & \cdots &, \sum_{m=1}^M D_m \G(\phi \circ f.) \delta_{|\Voc|,f_m} \end{pmatrix}.
     \end{align*}
 \end{lemma}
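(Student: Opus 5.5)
The plan is to write $\He$ as the composition of $\G$ with the linear map $\varthetae \mapsto \phi\circ f$, and then apply the chain rule. For fixed $f \in \F^M$, define $\Lambda_f: \R^{\de\times|\Voc|} \to \F^M_e \cong \R^{M\de}$ by $\Lambda_f(\varthetae) = (\varthetae_{f_1},\dots,\varthetae_{f_M})$. By Definitions \ref{def:thetam} and \ref{def:phis}, $\phi\circ f = \Lambda_f(\varthetae)$, so $\He(\varthetae;f)=\G(\Lambda_f(\varthetae))$. The key observation is that $\Lambda_f$ is \emph{linear} in $\varthetae$. Each entry of the output sequence is just a selection of one column of $\varthetae$:
$$h_m=\varthetae_{f_m}=\sum_{v=1}^{|\Voc|}\delta_{v,f_m}\varthetae_v.$$

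First I would compute the derivative of $\Lambda_f$. Since $\Lambda_f$ is linear, its derivative is itself. Concretely, the block $\partial h_m/\partial \varthetae_v \in \R^{\de\times\de}$ equals $\delta_{v,f_m} I_{\de}$.

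Next I would apply the chain rule, summing over the sequence positions $m$ because $\G$ depends on all of $h_1,\dots,h_M$:
$$\partial_{\varthetae_v}\He(\varthetae;f)=\sum_{m=1}^M D_m\G(\phi\circ f)\,\frac{\partial h_m}{\partial\varthetae_v}=\sum_{m=1}^M D_m\G(\phi\circ f)\,\delta_{v,f_m}\in\R^{|\Voc|\times\de}.$$
This assumes only that $\G$ is differentiable, which is implicit in the definition of $D_m\G$.

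Finally I would assemble the full derivative. Identify $\cL(\R^{\de\times|\Voc|},\R^{|\Voc|})$ with $\R^{|\Voc|\times|\Voc|\de}$ by stacking $\varthetae=(\varthetae_1,\dots,\varthetae_{|\Voc|})$ column-block by column-block, so that $D^{(e)}\He$ acting on a perturbation $\delta\varthetae$ equals $\sum_v \partial_{\varthetae_v}\He\,\delta\varthetae_v$. Concatenating the blocks for $v=1,\dots,|\Voc|$ then gives exactly the displayed row of blocks in the statement. Note that a block is zero whenever token $v$ does not occur in $f$.

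The only real obstacle is bookkeeping: the identification of $\R^{\de\times|\Voc|}$ with $\R^{|\Voc|\de}$ has to be fixed consistently with the ordering of the blocks, i.e. column-major ordering by token label. Once that convention is stated explicitly, the proof is a one-line chain rule.
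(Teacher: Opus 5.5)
Your proposal is correct and follows essentially the same route as the paper: from $h_m=\phi(f_m)$ compute $D^{(e)}h_m=(\delta_{1,f_m}I,\dots,\delta_{|\Voc|,f_m}I)$, then apply the chain rule $D^{(e)}\He=\sum_{m=1}^M D_m\G(h)\,D^{(e)}h_m$ and read off the blocks. The only difference is cosmetic: you assemble the result block by block in the token index $v$, whereas the paper multiplies the $|\Voc|\times\de$ matrix $D_m\G(h)$ by the full $\de\times\de|\Voc|$ row $D^{(e)}h_m$ in one step.
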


\begin{proof} Let $h=\phi \circ f.$ Note that $\sum_{m=1}^M D_m \G(h) \delta_{k,f_{m}}$ appearing in the $k^{th}$ position in the preceding identity is 
an element in $\R^{|\Voc| \times \de}.$ Since there are $|\Voc|$ such matrices stacked in a row we see that the preceding identity is
indeed an element in $\R^{|\Voc| \times |\Voc|\de}$ as required. 
Since $\phi(v)=\theta_v$ and  $h_m=\phi(f_m)$,
\begin{align*}
         D^{(e)} h_m = \begin{pmatrix} \partial_{\theta_1} h_m, & \cdots &, \partial_{\theta_{|\Voc|}} h_m \end{pmatrix} = \begin{pmatrix} \delta_{1,f_{m}} I, & \cdots &, \delta_{|\Voc|,f_{m}} I \end{pmatrix}.
     \end{align*}
By the chain rule, 
$$D^{(e)} \G(h_1,\dots,h_M) = \sum_{m=1}^M D_m \G(h_1,\dots,h_M) D^{(e)} h_m.$$
From a linear algebra perspective, we have the sum of products of matrices with sizes $\R^{|\Voc| \times \de}$ and $\R^{\de \times \de|\Voc|}$, resulting in a matrix of size $\R^{|\Voc| \times \de|\Voc|}$ with the desired form.
 \end{proof}

\rev{The preceding lemma may be used, with $\G(h_{1:\ell-1})=\qol\bigl(\cdot|h_{1:\ell-1};\varthetaa\bigr)$ and $h=\phi \circ f$, to compute the derivative of $\qem$ with respect to $\varthetae.$ We give a simple example to illustrate this.}

\begin{example} \rev{Let $\F_{e}^M \equiv (\R^\de)^M$ denote $\R^\de-$valued sequence of length $M=3$ and let $\Voc=\{a,b,c\}$ be a vocabulary of three distinct elements. 
In the abstract discussion we chose to label the elements in $\Voc$ by making the identification $\Voc=\{1, \cdots, |\Voc|\}=\Se^{|\Voc|}$; here we revert to working
directly with $\Voc=\{a,b,c\}$.
Consider the embedding matrix $\varthetae=(\theta_a, \theta_b, \theta_c) \in \R^{\de \times |\Voc|}$ with $\de = 2$.  For an input sequence $f=(a,b,a),$ of length $M$, the embedding map applied to the sequence is $h = \phi \circ f = (\theta_a,\theta_b,\theta_a) \in \F_e^M$; notice that the parameter $\theta_c$ does not appear in the embedded sequence.  Now consider an arbitrary function $\G:\F_{e}^M \to\R^{|\Voc|}$ and define $A_m=D_m\G(h)\in\R^{3\times 2}$ for $m = 1,2,3$. Then, Lemma~\ref{lemma:Gmap_der} gives, for $\He(\varthetae;f)=\G(h),$
$$
D^{(e)}\He(\varthetae;f) = \begin{pmatrix}
\sum_{m=1}^3 A_m\delta_{a,f_m}, 
&
\sum_{m=1}^3 A_m\delta_{b,f_m}, 
&
\sum_{m=1}^3 A_m\delta_{c,f_m}
\end{pmatrix} =
\begin{pmatrix}
A_1+A_3, & A_2, & 0
\end{pmatrix},$$
an element in $\R^{3 \times 6}.$}
\end{example}

\subsection{$\varthetaa$ Derivative} \label{subsec:varthetaa_derivative}
Here we discuss evaluation of the derivatives of the next-token probability model \eqref{eq:sumz} 
with respect to the model parameters $\varthetaa$: the
parameters defining the linear transformation $\Bl$ and the set transformer $\mathsf{T}$ in Subsection~\ref{ssec:sett2}. 
Recall that the conditional probabilities needed to define the model take the form
$$\qol\bigl(\cdot|h_{1:\ell-1};\varthetaa\bigr) = \sigma\bigl(\Bl \circ \mathsf{T}(h_{1:\ell-1})/\tau\bigr).$$
where we may write $\varthetaa=\Bigl(\theta^{(b)},\theta^{(t)}\Bigr)$ and $\theta^{(b)}$ (resp. $\theta^{(t)}$) are the
parameters entering $\Bl$ (resp. $\T.$) We recall that $\Bl \in \mathcal{L}(\R^{\de}; \R^{|\Voc|}) \cong \R^{|\Voc| \times \de}$
and we may simply view $\theta^{(b)}$ as an element of $\R^{|\Voc| \times \de} \equiv \R^{d_b}$, with $d_b=|\Voc|\de$ (although learning parameterization of $\Bl$ in lower dimension is also possible.) We have thus identified $\Bl$ with $\theta^{(b)}$ and its 
derivatives with respect to $\theta^{(b)}$ is simply the identity map. The parameterization of $\T$ is through the parameters of each attention map $\Ac_k$ appearing in \eqref{eq:settdef}, and lies in a Euclidean space isomorphic to $\R^{d_T}.$ The derivative of
$\T$ with respect to $\theta^{(t)}$ is thus an element of  $\mathcal{L}(\R^{d_T}; \R^{\de}).$ We also note that
$\sigma(z) \colon \R^{|\Voc|} \rightarrow \R^{|\Voc|}$ and $\sigma'(z) \in \mathcal{L}(\R^{|\Voc|};\R^{|\Voc|}) \cong \R^{|\Voc| \times |\Voc|}$.
Combining these derivatives we may use the chain rule to compute derivatives of $\qem\bigl(\cdot|h_{1:\ell-1};\varthetaa\bigr)$ with respect to
$\varthetaa.$

\section{Probabilistic Perspective on the Set Transformer Architecture} \label{sec:arch_probabilistic}

In this section, we interpret the set transformer architecture described in Section~\ref{sec:arch} through attention maps acting on probability measures on $\R^{\de}$, rather than on sequences of arbitrary length in $\R^{\de}$. This perspective is useful for applications of the set
transformer outside LLMs. It is also useful for the interpretation of LLMs and how text sequences
are represented. 

\paragraph{Encoding Sequences as Empirical Measures.} The first step in this interpretation of
the set transformer is to encode sequences in a manner that is independent of their length. 
To this end, we define an encoding operator
\begin{equation}
    \label{eq:emp}
\Em: \bigcup_{\ell=1}^L \F^{\ell}_e \to \prob(\R^{\de}), \quad \Em h_{1:\ell}=\frac{1}{\ell} \sum_{k=1}^{\ell} \delta_{h_k}.
\end{equation}
Now recall the notation for a sequence $h_{1:m-1}:=(h_1, \cdots, h_{m-1}) \in \F^{(m-1)}_e$. We process this sequence to obtain a probability measure in $\R^{\de}$ using the empirical measure
\begin{equation}
    \label{eq:emp2}
p^{(m-1)}:= \Em h_{1:m-1}=\frac{1}{(m-1)} \sum_{k=1}^{m-1} \delta_{h_k}.
\end{equation}

\paragraph{Set Transformer Acting on Probability Measures.}

The architecture is composed of parameterized \emph{attention} maps $\Asf_k \colon \mathcal{P}(\R^{\de}) \rightarrow \mathcal{P}(\R^{\de})$, seen as acting between spaces of probability measures, and a pooling layer $\mathfrak{P} \colon \mathcal{P}(\R^{\de}) \rightarrow \R^{\de}$, seen as mapping probability measures into Euclidean space. In particular, the set transformer $\mathsf{T}: \F^{(m-1)}_e \to \R^{\de}$ is defined by
\begin{equation}
\label{eq:settdef2}    
\mathsf{T}(h_{1:m-1}) = \mathfrak{P} \circ \Asf_K \circ \dots \Asf_1(p^{(m -1)}),
\end{equation}
where $p^{(m -1)}=\Em h_{1:m-1} $ is defined by \eqref{eq:emp2}. We describe the
attention $\Asf_k$ and pooling $\mathfrak{P}$ components of the transformer next.

\paragraph{Attention Maps.} Given a probability density\footnote{We allow empirical measures to be included in this definition of ``density''} $p \in \mathcal{P}(\R^{\de})$, we define the map $\Asf(p)  = \A(\cdot\,;p)_\sharp p$ by the pushforward of a transport map $\A(\cdot\,; p)  : \R^{\de} \to \R^{\de}$. To define the transport, we need the following definition of the $s-$parameterized density $\pi(\cdot;s)$ found by reweighting $p$:
\begin{align*}
\pi(u;s) &= \frac{1}{Z(s)}\exp\bigl(\langle Qs,Ku\rangle_{\R^{c}}\bigr)p(u),\\
Z(s) & = \int_{\R^{\de}} \exp\bigl(\langle Qs,Ku\rangle_{\R^{c}}\bigr)p(u)du.
\end{align*}
The map $\A$ is then defined by the following sequence of operations:
\begin{subequations} \label{eq:ow}
\begin{align}
\A(s;p) & \leftarrow s+ V\,\mathbb{E}_{u\sim \pi(\cdot;s)}[u], \quad u \in \R^{\de}, \label{eq:owa}\\
\A(s;p) & \leftarrow \NO\bigl(\A(s;p); \A(\cdot\,;p)_\sharp p\bigr),\\
\A(s;p) & \leftarrow s+\NN\bigl(\A(s;p)\bigr), \label{eq:owc} \\
\A(s;p) & \leftarrow \NO\bigl(\A(s;p); \A(\cdot\,;p)_\sharp p\bigr),
\end{align}
\end{subequations}
where $\NO:\R^{\de} \times \mathcal{P}^{(m-1)}_e \to \F^{(m-1)}_e$ and $\NN \colon \R^{\de} \rightarrow \R^{\de}$ denote normalization and neural network maps, respectively. These maps are defined in Section~\ref{ssec:mopm2} as acting elementwise, which here we extend to also depend on an input measure for the normalization map. Example~\ref{ex:batch_norm} provide an example of a normalization map that also depends on a probability distribution. 

We have defined $\A$ as acting on general probability densities, and note that the definition is readily interpreted to include action on empirical measures $p=p^{(m-1)}$. In particular, when the map $\A$ is both parameterized by an empirical measure and it is evaluated at each element in the empirical measure, i.e., $h_k \leftarrow \A(h_k\,; p^{(m-1)})$ for $k = 1,\dots,m-1$ and the four steps in~\eqref{eq:ow}, we arrive at the definition of the attention block as acting on sequences in~\eqref{eq:ow2}.

\begin{example}[Batch Normalization] \label{ex:batch_norm} This normalization depends on input measure $\pi$ on $\R^{\de}$. It is defined by computing the mean and variance of $\pi$ and applying the operation
$$\NO(u;\pi) =  D\frac{(u - \mathbb{E}_{u \sim \pi}[u]))}{\sqrt{\mathbb{V}_{u \sim \pi}[u] + \varepsilon}} + b,$$
where $D \in \R^{\de \times \de}$ and $b \in \R^{\de}$ are trainable parameters as in Example~\ref{ex:ln}; $\varepsilon > 0$ is a user-prescribed stabilization constant. 

This normalization is typically applied when evaluating the attention map on a batch of input sequences $h^{(1)}, \dots, h^{(N)}$ where each sequence $h^{(n)} = (h_1^{(n)}, \dots, h_\ell^{(n)})$ for a given length $\ell$. By assuming the sequence elements and batches are independent, $\pi$ can be defined as the empirical measure 
$$\pi = \frac{1}{\ell n} \sum_{k=1}^\ell \sum_{n=1}^N \delta_{h_k^{(n)}}.$$
By using larger batch sizes, we can expect to learn parameters that generalize well to input measures that are representative of the population-level limit over sequences defined in Data Assumption~\ref{das:sequences}.
\end{example}

\paragraph{Pooling Operator} The operator $\mathfrak{P} \colon \mathcal{P}(\R^{\de}) \rightarrow \R^{\de}$ is a function acting on probability measures and
returning a vector. This function includes the case of linear functionals on measures, written for $\mathfrak{f}: \R^{\de} \mapsto \R^{\de}$, as
\begin{equation} \label{eq:lf}
\mathfrak{P}(p) = \int \mathfrak{f}(h)dp(h).
\end{equation}
In the context of the set transformer, $\mathfrak{P}$ is defined so that, when it acts on an empirical measure defined from a sequence as
in \eqref{eq:emp}, the overall mapping is both  permutation-invariant and is well-defined for input sequences of arbitrary length. The following example contains the population-level variant of the widely used choices defined in Example~\ref{ex:pooling} that satisfy these properties. 

\begin{example} The mean pooling operation is given by choosing $\mathfrak{f}(h)=h,$ which yields the operator %
$$\mathfrak{P}(p) = \int h dp(h).$$
The max pooling operation is defined by choosing the elementwise operation $\mathfrak{f}(h)_j = \max((h)_j)$ for each $h \in \R^{\de}$ and $j = 1,\dots,\de$. The continuum analog underlying max pooling is to choose to compute the elementwise essential supremum, which can be defined as a limit:
$$\mathfrak{P}(p)_j = \lim_{\beta \rightarrow \infty} \frac{1}{\beta} \log\left(\int e^{\beta (h)_j} p(h) dh\right).$$
\end{example}

\section{Continuum State-Space Models} \label{sec:ssmcts}

In this section we provide a continuous time perspective on the state-space models introduced in Subsection~\ref{sssec:ssm}. We view the state-space models as time-discretizations defined through a continuously evolving hidden state, driven by a continuous-in-time input.

Structured State Space Models (SSMs) map an input $h \colon [0,T] \rightarrow \R^{\de}$ into output $u \colon [0,T] \rightarrow \R^{\de}.$ This is achieved by evolving a hidden state $v(t) \in \R^{s}$ that takes $h$ as input in a causal way, and then defining the output $u$ through a function, that takes both $v$ and $h$ as input, acting pointwise in time. The model is parameterized by $\vartheta=(\vartheta_0,\vartheta_{1},\vartheta_{2})$ and takes the form 
\begin{subequations}
\label{eq:state_space}
\begin{align}
\frac{dv}{dt} &= \mathcal{F}(v, h; \vartheta_1), \quad v(0) = \vartheta_0,\\
\quad u &= \mathcal{G}(v,h; \vartheta_2);
\end{align}
\end{subequations}
here the operators $\mathcal{F}(\cdot,\cdot;\vartheta_1) \colon \R^s \times \R^{\de} \rightarrow \R^s$ and $\mathcal{G}(\cdot,\cdot;\vartheta_2) \colon \R^s \times \R^{\de} \rightarrow \R^{\de}$ define the evolution of the state and the output starting from the initial condition $\vartheta_0 \in \R^s$. One natural choice is to define operators via the affine structure 
\begin{subequations}
\label{eq:linear_operators}
\begin{align}
\mathcal{F}(v,h;\vartheta) &= Av + Bh,\\ 
\mathcal{G}(v,h;\vartheta) &= Cv + Dh,
\end{align}
\end{subequations}
so that $\vartheta$ is defined by the matrices $A \in \R^{s \times s}, B \in \R^{s \times \de}, C \in \R^{\de \times s}, D \in \R^{\de \times \de}$. That is, the model parameters are given by $\vartheta = (\vartheta_0,A,B,C,D)$. 
The dimensionality of the hidden state is commonly chosen to be (considerably) larger than the embedding dimension of the sequence elements: $s \gg \de$. 

Solving \eqref{eq:state_space}, under \eqref{eq:linear_operators}, yields
\begin{subequations}
\begin{align} %
v(t) &= e^{At} \vartheta_0 + \int_0^t e^{At} B e^{-As} h(s) ds, \\
u(t) &= Cv(t) + Dh(t),
\end{align}
\end{subequations}
which defines a map from an input $h$ to the output $u$. 

\begin{remark}
This map, discretized in time. can be used in place of the attention mechanism in~\eqref{eq:owa}. 
In practice, the input sequence is represented at discrete-times $t_k \in [0,T]$ by considering a step-size $\Delta t \geq 0$. The evolution of the hidden state $v_k \coloneqq v(t_k)$ for the linear system as a function of $(h_k)_{k =1}^N$ and $h_k \coloneqq h(t_k)$ is then given by the discrete-time equations 

\begin{subequations}
\label{eq:state_space2}
\begin{align}
v_{k+1} &= e^{A \Delta t} v_{k} + e^{A_{t_{k+1}}}\Bigl(\int_0^{\Delta t} B e^{-Az}  dz\Bigr) e^{-At_k} h_k, \\
u_{k+1} &= Cv_{k+1} + D h_{k+1}.
\end{align}
\end{subequations}
\end{remark}

\section{Proof of Reversibility of Discrete Diffusion} \label{app:discrete_reversibility}

Recall $\mathsf{p}(\cdot,t)$ solving \eqref{eq:CTMC} and $\bar{\mathsf{p}}(\cdot,t)$ solving \eqref{eq:reverseCTMC}. We have the following theorem concerning their
relationship to one another:

\begin{theorem} Let $f(\cdot): [0,T] \to \Voc$ be a sequence that follows a CTMC according to the rate matrix $Q(t) \in \R^{|\Voc| \times |\Voc|}$ and has marginal probability $\mathsf{p}\colon \Voc \times [0,T] \rightarrow \mathcal{P}(\Voc)$. \rev{Assume $p(i,t) > 0$ for all $t \in [0,T]$ and $i=1,\dots,|\Voc|$.} Now consider a sequence $\overline{f}(\cdot)$ evolving in reverse-time that follows a CTMC according to the rate matrix $\overline{Q}(t)$ with entries 
\begin{equation} \label{eq:reverse_rate}
  \overline{Q}_{ij}(t) = Q_{ji}(t) \frac{\mathsf{p}(j,t)}{\mathsf{p}(i,t)},  
\end{equation}
for all $t \in [0,T]$ and $1 \leq i,j \leq |\Voc|$. Let $\overline{f}(t)$ have marginal probability $\overline{\mathsf{p}}(\cdot,t) \colon \Voc \times [0,T] \rightarrow \mathcal{P}(\Voc)$ and run the process backwards on $[0,T]$, starting from $\overline{f}(T) \sim \mathsf{p}(\cdot,T)$. Then, $\overline{f}(t) \overset{d}{=} f(t)$: $\bar{\mathsf{p}}(\cdot,t) = \mathsf{p}(\cdot,t)$ for all $t \in [0,T]$. 
\end{theorem}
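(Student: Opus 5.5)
We will show directly that $t \mapsto \mathsf{p}(\cdot,t)$ satisfies the backward Kolmogorov (master) equation associated with the reversed chain, with the same terminal data $\mathsf{p}(\cdot,T)$. Uniqueness for this finite-dimensional linear ODE then forces $\bar{\mathsf{p}} = \mathsf{p}$.

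First we fix conventions. Since the paper writes \eqref{eq:CTMC} as $\partial_t \mathsf{p} = Q\mathsf{p}$ while stating row sums $\sum_j Q_{ij}=0$, we will work with the component form of the forward master equation:
\[
\partial_t \mathsf{p}(i,t) = \sum_{j\neq i}\bigl(Q_{ji}(t)\mathsf{p}(j,t) - Q_{ij}(t)\mathsf{p}(i,t)\bigr).
\]
Here $Q_{ij}$ is the rate of jumping from $i$ to $j$, and $Q_{ii}=-\sum_{j\ne i}Q_{ij}$. The reversed process $\overline{f}$ runs in the time variable $s=T-t$, with jump rate $\overline{Q}_{ij}(t)$ from $i$ to $j$. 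Writing $\bar{\mathsf{p}}(i,t)$ for its marginal, its master equation in reverse time reads
\[
-\partial_t \bar{\mathsf{p}}(i,t) = \sum_{j\ne i}\bigl(\overline{Q}_{ji}(t)\bar{\mathsf{p}}(j,t) - \overline{Q}_{ij}(t)\bar{\mathsf{p}}(i,t)\bigr), \qquad \bar{\mathsf{p}}(\cdot,T)=\mathsf{p}(\cdot,T).
\]
The positivity assumption $\mathsf{p}(i,t)>0$ guarantees that $\overline{Q}$ in \eqref{eq:reverse_rate} is well defined. It also makes $\overline{Q}$ nonnegative off the diagonal, since $Q_{ji}\ge 0$, so $\overline{Q}$ is a legitimate rate matrix once we set $\overline{Q}_{ii}=-\sum_{j\neq i}\overline{Q}_{ij}$. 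We will assume $Q(\cdot)$ is continuous, so that $\mathsf{p}$ is $C^1$ and $\overline{Q}(\cdot)$ is continuous on $[0,T]$.

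The key step is to substitute $\bar{\mathsf{p}}=\mathsf{p}$ into the right-hand side of the reversed equation and use \eqref{eq:reverse_rate}. This gives $\overline{Q}_{ji}\mathsf{p}(j,t) = Q_{ij}\mathsf{p}(i,t)$ and $\overline{Q}_{ij}\mathsf{p}(i,t)=Q_{ji}\mathsf{p}(j,t)$. The right-hand side therefore becomes
\[
\sum_{j\ne i}\bigl(Q_{ij}\mathsf{p}(i,t)-Q_{ji}\mathsf{p}(j,t)\bigr) = -\partial_t\mathsf{p}(i,t),
\]
by the forward equation. So $\mathsf{p}$ solves the reversed master equation and matches the terminal condition at $t=T$. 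Both $\mathsf{p}$ and $\bar{\mathsf{p}}$ solve the same linear system $\frac{d}{ds}x = \overline{M}(T-s)x$ on $\mathbb{R}^{|\Voc|}$ with continuous coefficients and the same initial value at $s=0$. Picard–Lindelöf (or a Gr\"onwall estimate on the difference) then gives $\bar{\mathsf{p}}(\cdot,t)=\mathsf{p}(\cdot,t)$ for all $t\in[0,T]$, which is exactly the claim $\overline{f}(t)\overset{d}{=}f(t)$.

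We expect the main obstacle to be bookkeeping rather than analysis. The diagonal entries must be handled consistently: $\overline{Q}_{ii}$ computed from \eqref{eq:reverse_rate} equals $Q_{ii}$, which is not in general the value $-\sum_{j\ne i}\overline{Q}_{ij}$. We will therefore define the reversed generator by its off-diagonal entries only and write the master equations in flux form as above. The transpose convention between \eqref{eq:CTMC} and the row-sum condition must also be reconciled, and the sign flip from running time backward must be tracked. With the flux form fixed at the outset, the cancellation is a one-line identity.
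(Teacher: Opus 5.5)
Your proposal is correct and follows the same route as the paper: substitute $\bar{\mathsf{p}}=\mathsf{p}$ into the reversed master equation, use \eqref{eq:reverse_rate} to turn the reversed fluxes into the forward ones with a sign flip, and conclude by uniqueness for the linear ODE with the same terminal data. The paper relies implicitly on three points that you make explicit: defining $\overline{Q}_{ii}$ by the row-sum condition rather than by \eqref{eq:reverse_rate}, tracking the sign from time reversal, and requiring continuity of the coefficients for uniqueness; this tightens the same argument without changing it.
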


\begin{proof} If $\overline{f}$ is a CTMC, then its marginal probability $\overline{\mathsf{p}}$ evolves according to the forward equation:
\begin{align*}
\frac{d\overline{\mathsf{p}}(i,t)}{dt} &= \sum_{k \in \Voc} \overline{Q}_{ki}(t)\overline{\mathsf{p}}(k,t) \\
&= \sum_{k \neq i} \overline{Q}_{ki}(t) \overline{\mathsf{p}}(k,t) + \overline{Q}_{ii}(t) \overline{\mathsf{p}}(i,t) \\
&= \sum_{k \neq i} \overline{Q}_{ki}(t) \overline{\mathsf{p}}(k,t) + \left(- \sum_{\ell \neq i} \overline{Q}_{i\ell}(t) \right) \overline{\mathsf{p}}(i,t),
\end{align*}
where in the last line we used the row-sum property for the rate matrix $\sum_\ell \overline{Q}_{i\ell}(t) = 0$ for all $i$. Using the definition for the reverse rate matrix in~\eqref{eq:reverse_rate} we have
\begin{align*}
\frac{d\overline{\mathsf{p}}(i,t)}{dt} &= \sum_{k \neq i} Q_{ik}(t) \frac{\mathsf{p}(i,t)}{\mathsf{p}(k,t)} \overline{\mathsf{p}}(k,t) - \sum_{\ell \neq i} Q_{\ell i}(t) \frac{\mathsf{p}(\ell, t)}{\mathsf{p}(i,t)} \overline{\mathsf{p}}(i, t).
\end{align*}
Observe that taking $\overline{\mathsf{p}}(k,t) = \mathsf{p}(k,t)$ for all $k = 1,\dots,|\Voc|$, then we have 
\begin{align*}
\frac{d\mathsf{p}(i,t)}{dt} = \frac{d\overline{\mathsf{p}}(i,t)}{dt} &= \sum_{k \neq i} Q_{ik}(t) \mathsf{p}(i,t) - \sum_{\ell \neq i} Q_{\ell i}(t) \mathsf{p}(\ell,t) \\
&= (-Q_{ii}(t))\mathsf{p}(i,t) - \sum_{\ell \neq i} Q_{\ell i}(t) \mathsf{p}(\ell,t) \\
&= -\sum_{\ell \neq i} Q_{\ell i}(t)\mathsf{p}(\ell,t),
\end{align*}
which is the forward equation for the marginal probability $\mathsf{p}$ after accounting for the change in the sign for the drift from evolving the equation backwards-in-time.
By uniqueness of $\mathsf{p}(\cdot,t)$ solving \eqref{eq:CTMC} and $\bar{\mathsf{p}}(\cdot,t)$ solving \eqref{eq:reverseCTMC} the result follows.
\end{proof}

\section{Details of Prompts and Answers} \label{sec:LLMresponses}

\subsection{Question Answering} \label{ssec:aqac}

The following example supplements the discussion in Subsection \ref{ssec:qac}.

\begin{lstlisting}[language=json, mathescape=true, escapeinside={(*}{*)}]
"prompt": A man named Frank travels from NYC to Philadelphia by road, on a Monday, leaving at 9:00am and arriving at 12noon. He then travels from Philadelphia to NYC the next day, Tuesday, also leaving at 9:00am and arriving at 12 noon. He takes the same route, but on Tuesday reverses the route taken on Monday. Is it necessarily the case that there is at least one point that he will be located at the same time, albeit on the two different days? Explain your answer
"output": Yes, it is necessarily the case that there is at least one point where Frank will be located at the same time on both days. This is due to the Intermediate Value Theorem, which states that if a continuous function takes values f(a) and f(b) at points a and b, respectively, then it also takes any value between f(a) and f(b) at some point between a and b.

In this scenario, we can consider Frank's position as a continuous function of time. On Monday, he travels from NYC to Philadelphia, and on Tuesday, he travels from Philadelphia back to NYC. Since he takes the same route on both days, his position as a function of time will be continuous.

Let's denote Frank's position on Monday as f(t) and on Tuesday as g(t), where t represents time. Since Frank arrives at 12 noon on both days, we have f(12) = g(12). Now, let's consider the function h(t) = f(t) - g(t). This function represents the difference in Frank's position between Monday and Tuesday.

Since Frank's position is continuous, h(t) is also continuous. Moreover, h(9) = f(9) - g(9) represents the difference in Frank's position at 9:00am on both days, and h(12) = f(12) - g(12) = 0, as Frank arrives at the same time on both days.

By the Intermediate Value Theorem, there must be some time t between 9:00am and 12:00pm where h(t) = 0, meaning that Frank's position on Monday and Tuesday is the same at that time. Therefore, it is necessarily the case that there is at least one point where Frank will be located at the same time on both days.<|endoftext|>
\end{lstlisting}

\subsection{Conversations}\label{ssec:aconv}

The example in this subsection supplements the discussion in Subsection~\ref{ssec:conv}, on question and answering  with LLMs, by extending the example on Bayes' theorem.

\begin{lstlisting}[language=json, mathescape=true, escapeinside={(*}{*)}]

"prompt": Now explain it more formally to a mathematician.
"output": Bayes' theorem is a fundamental concept in probability theory that allows us to calculate the conditional probability of an event given prior knowledge of other related events. Mathematically, it is expressed as follows:

P(A|B) = (P(B|A) * P(A)) / P(B)

Where:
- P(A|B) is the posterior probability of event A occurring given that event B has occurred.
- P(B|A) is the likelihood of event B occurring given that event A has occurred.
- P(A) is the prior probability of event A occurring.
- P(B) is the marginal probability of event B occurring.

In simpler terms, Bayes' theorem helps us update our beliefs or probabilities about an event based on new evidence provided by other related events. It is widely used in various fields such as statistics, machine learning, and artificial intelligence to make predictions and decisions under uncertainty.<|endoftext|>
\end{lstlisting}

\subsection{Mathematics}\label{ssec:acontext}

In this subsection we compare the outputs of the LLM, with and without context, for a multiplication task. We consider the multiplication of six different pairs of integers, where each pair contains a two digit and three digit number. Our
initial experiments will use deterministic greedy-decoding. We will then compare the responses with those obtained using a stochastic sampling procedure: positive temperature $\tau$ is used at inference time; see Remark~\ref{rem:temp}.

First, we demonstrate the deterministic algorithm for calculating $473 \times 45$; the correct answer is $21285$. When context is not provided in the prompt, an incorrect answer is given:

\begin{lstlisting}[language=json, mathescape=true, escapeinside={(*}{*)}]
"prompt":  Calculate 473*45
"output": To calculate \( 473 \times 45 \), we can use the standard multiplication method:

\[
\begin{array}{r}
     473 \\
\times  45 \\
\hline
   2365 \quad (\text{This is } 473 \times 5) \\
+ 18920 \quad (\text{This is } 473 \times 40, \text{ shifted one position to the left}) \\
\hline
  21385 \\
\end{array}
\]

So, \( 473 \times 45 = 21385 \).<|im_end|>
\end{lstlisting}
However, if we provide context in the prompt, then the correct answer is given. Here, the context comprises
the necessary skills, together with an example in which integer multiplication is built-up, step-by-step, using the provided skills.
\begin{lstlisting}[language=json, mathescape=true, escapeinside={(*}{*)}]
"context": Skill <extract_digits>: Extract the digits in a number to a list. 
For example, Extract digits in 123 to D=[1,2,3]. Extract digits in 7654 to D=[7,6,5,4]. \
Skill <list_length>: Get the number of elements in a list.
For example, D=[1,2,3], len(D)=3. A=[1,2,4,5,6], len(A)=5.

Skill <mul_two_single_digit_number>: Multiply two single-digit numbers.
0*1=0 0*2=0 0*3=0 0*4=0 0*5=0 0*6=0 0*7=0 0*8=0 0*9=0
1*1=1 1*2=2 1*3=3 1*4=4 1*5=5 1*6=6 1*7=7 1*8=8 1*9=9
2*1=2 2*2=4 2*3=6 2*4=8 2*5=10 2*6=12 2*7=14 2*8=16 2*9=18
3*1=3 3*2=6 3*3=9 3*4=12 3*5=15 3*6=18 3*7=21 3*8=24 3*9=27
4*1=4 4*2=8 4*3=12 4*4=16 4*5=20 4*6=24 4*7=28 4*8=32 4*9=36
5*1=5 5*2=10 5*3=15 5*4=20 5*5=25 5*6=30 5*7=35 5*8=40 5*9=45
6*1=6 6*2=12 6*3=18 6*4=24 6*5=30 6*6=36 6*7=42 6*8=48 6*9=54
7*1=7 7*2=14 7*3=21 7*4=28 7*5=35 7*6=42 7*7=49 7*8=56 7*9=63
8*1=8 8*2=16 8*3=24 8*4=32 8*5=40 8*6=48 8*7=56 8*8=64 8*9=72
9*1=9 9*2=18 9*3=27 9*4=36 9*5=45 9*6=54 9*7=63 9*8=72 9*9=81
     
Skill <add_multiple_numbers>: Add multiple numbers such as m+n+p:
1. Add the first two numbers m+n and get the result r1=m+n.
2. Add the third number p to r1 and get the result r2=r1+p.
For example, calculate 128+64+79:
1. Add the first two numbers 128+64 and get the result r1=128+64=192.
2. Add the third number 79 to r1 and get the result r2=192+79=271.
So 128+64+79=271.

Example: Calculate 184*67:
Answer:
1. Using Skill <extract_digits> and Skill <list_length>, extract the digits in 184 to DM=[1,8,4].
len(DM)=3. Extract the digits in 67 to DN=[6,7]. len(DN)=2.
2. Add 0,1,len(DM)-1=2 zeros to the end of every number in DM=[1,8,4] according to
the position of the number in DM: DMO=[1*100,8*10,4*1]=[100,80,4].
3. Add 0,len(DN)-1=1 zeros to the end of every number in DN=[6,7] according to the
position of the number in DN: DNO=[6*10,7*1]=[60,7].
4. Using Skill <mul_two_single_digit_number>, multiple every number in DMO=[100,80,4]
with every number in DNO=[60,7] and get R=[100*60,100*7,80*60,80*7,4*60,4*7]=
[6000,700,4800,560,240,28].
5. Using Skill <add_multiple_numbers>, add all the numbers in R=[6000,700,4800,560,240,28],
6000+700+4800+560+240+28:
i. Add the first two numbers: r1=6000+700=6700.
ii. Add the third number 4800 to r1=6700: r2=6700+4800=11500.
iii. Add the fourth number 560 to r2=11500: r3=11500+560=12060.
iv. Add the fifth number 240 to r3=12060: r4=12060+240=12300.
v. Add the sixth number 28 to r4=12300: r5=12300+28=12328.
6. So the answer is 12328
"prompt": Calculate 473*45
"output": Let's break down the calculation step by step:

1. **Extract Digits and Prepare for Multiplication:**
   - Extract digits from 473: DM = [4, 7, 3]. len(DM) = 3.
   - Extract digits from 45: DN = [4, 5]. len(DN) = 2.
   
   We need to add zeros to the end of each number in DM and DN to align their positions for multiplication:
   - DMO = [4*100, 7*10, 3*1] = [400, 70, 3].
   - DNO = [4*10, 5*1] = [40, 5].

2. **Multiply Each Pair of Numbers:**
   - Now we multiply each number in DMO with each number in DNO:
     - 400 * 40 = 16000
     - 400 * 5 = 2000
     - 70 * 40 = 2800
     - 70 * 5 = 350
     - 3 * 40 = 120
     - 3 * 5 = 15

   So, the result R is: R = [16000, 2000, 2800, 350, 120, 15].

3. **Add All Numbers in Result R:**
   - Add the first two numbers: r1 = 16000 + 2000 = 18000.
   - Add the third number to r1: r2 = 18000 + 2800 = 20800.
   - Add the fourth number to r2: r3 = 20800 + 350 = 21150.
   - Add the fifth number to r3: r4 = 21150 + 120 = 21270.
   - Add the sixth number to r4: r5 = 21270 + 15 = 21285.

Therefore, the final answer is 21285.<|im_end|>
\end{lstlisting}

We now repeat the comparison between multiplication, undertaken with and without context, for three other pairs of integers. For conciseness, we only include the final line of the LLM's output without and with in-context learning (ICL). We include the true answer for the task below each prompt. We observe that while the model without ICL only produces a correct solution in one out of three examples, ICL correctly solves all three of the multiplication tasks.

\begin{lstlisting}[language=json, mathescape=true, escapeinside={(*}{*)}]
"prompt": Calculate 299*89
"truth": 26611
"output (Without ICL)": So, \( 299 \times 89 = 26911 \).<|im_end|>
"output (With ICL)": Therefore, the final answer is 26611.<|im_end|>
\end{lstlisting}

\begin{lstlisting}[language=json, mathescape=true, escapeinside={(*}{*)}]
"prompt": Calculate 378*64
"truth": 24192
"output (Without ICL)": So, \( 378 \times 64 = 24072 \).<|im_end|>
"output (With ICL)": Therefore, the final answer is 24192.<|im_end|>
\end{lstlisting}

\begin{lstlisting}[language=json, mathescape=true, escapeinside={(*}{*)}]
"prompt": Calculate 952*76 
"truth": 72352
"output (Without ICL)": \[ 952 \times 76 = 72352 \]<|im_end|>
"output (With ICL)": So the answer is 72352.<|im_end|>
\end{lstlisting}

The reader might have the impression that use of ICL, then, solves the problem of multiplication accuracy. However, this is not the case as the next example shows: here the model without ICL is correct, but adding context leads to the wrong answer:
\begin{lstlisting}[language=json, mathescape=true, escapeinside={(*}{*)}]
"prompt": Calculate 874*25
"truth": 21850
"output (Without ICL)": So, \( 874 \times 25 = 21850 \).<|im_end|>
"output (With ICL)": Therefore, \(874 \times 25 = 6128\).<|im_end|>
\end{lstlisting}

Lastly, we present  model responses with stochasticity,
exemplifying the discussion in Remark~\ref{rem:temp}. Here we set the temperature parameter $\tau=0.5$. We recall that the choice $\tau=0$, deployed in all previous experiments, results in a deterministic output, but that any $\tau>0$ will result
in stochastic output. We present the output of the stochastic algorithm for the calculation of $378 \times 64$; the correct answer is $24192$. We include two outputs below from the stochastic algorithm without ICL to demonstrate when the model does and does not produce the correct answer. We observe that the model does produce the correct answer when it uses a reasoning process in the first output.

\begin{lstlisting}[language=json, mathescape=true, escapeinside={(*}{*)}]
"prompt": Calculate 378*64
"output": To calculate \( 378 \times 64 \), we can use the standard multiplication method or a calculator for an exact result.

Using the standard multiplication method:

\[
\begin{array}{r}
     378 \\
\times  64 \\
\hline
   1512 \quad (\text{378} \times 4) \\
 22680 \quad (\text{378} \times 60, \text{shift one position to the left}) \\
\hline
 24192 \\
\end{array}
\]

So, \( 378 \times 64 = 24192 \).<|im_end|>
\end{lstlisting}

\begin{lstlisting}[language=json, mathescape=true, escapeinside={(*}{*)}]
"prompt": Calculate 378*64
"output": To calculate \( 378 \times 64 \), we can use the standard multiplication method or a calculator for an exact result.

Using a calculator, the result is:

\[ 378 \times 64 = 24048 \]<|im_end|>
\end{lstlisting}
To further probe the role of stochastic next-token prediction we compute the output five times with and without ICL. We observe that the model returns the correct answer in four out of five instances without ICL, and in all five instances with ICL.

\subsection{Coding} \label{ssec:acode}

In this example, we present the output from requesting {\sc{Matlab}} code to numerically solve an ODE, contrasting
model response with and without fine tuning. Following the code, we include the plot produced by {\sc{Matlab}} for the code resulting from both models. We start without fine-tuning:
\begin{lstlisting}[language=json, mathescape=true, escapeinside={(*}{*)}]
"prompt": Write {\sc{Matlab}} code to numerically solve a nonlinear ODE with vector field f. Only include the code in the response.
"output": Sure, here's an example of how you can use the `ode45` solver in {\sc{Matlab}} to numerically solve a nonlinear ordinary differential equation (ODE) with a given vector field `f`:

```{\sc{Matlab}}
f = @(t, y) [y(2); -y(1) - y(2)^2];

y0 = [1; 0];

tspan = [0 10];

[t, y] = ode45(f, tspan, y0);

plot(t, y(:, 1), 'b', t, y(:, 2), 'r');
xlabel('Time');
ylabel('Solution');
legend('y1', 'y2');
title('Solution of the ODE');
```

This code defines the vector field `f` as a function of time `t` and the state vector `y`. It then uses the `ode45` solver to find the solution over the specified time span `[0 10]` with initial conditions `y0 = [1; 0]`. Finally, it plots the solutions for `y1` and `y2` over time.<|endoftext|>
\end{lstlisting}
With fine-tuning we obtain a different example:
\begin{lstlisting}[language=json, mathescape=true, escapeinside={(*}{*)}]
"prompt": Write {\sc{Matlab}} code to numerically solve a nonlinear ODE with vector field f. Provide only the code for solving.
"output": Sure, here's the code to numerically solve a nonlinear ODE with vector field f using the `ode45` solver in {\sc{Matlab}}:

```{\sc{Matlab}}
function df = myVectorField(t, y)
    df = [-y + sin(t)];
end

y0 = [1]; %
tspan = [0 10]; %

[t, y] = ode45(@myVectorField, tspan, y0);

plot(t, y);
xlabel('Time (t)');
ylabel('y(t)');
title('Solution of the ODE');
grid on;
```

This code defines a vector field function `myVectorField` that represents the right-hand side of your ODE. The `ode45` function is used to solve the ODE numerically over the specified time span with the given initial conditions. Finally, the solution is plotted using `plot`. Adjust the vector field function and initial conditions according to your specific problem.<|endoftext|>
\end{lstlisting}
The following figure demonstrates the fact that the Qwen based code generator \emph{without} fine-tuning delivers a piece of {\sc{Matlab}} software which integrates an ODE that blows up on the domain of integration; this is clearly undesirable.
\emph{With} fine-tuning this issue is not present.
\begin{figure}[!htb]
    \centering
    \begin{minipage}[t]{0.48\textwidth}
        \centering
        \includegraphics[width=0.95\textwidth]{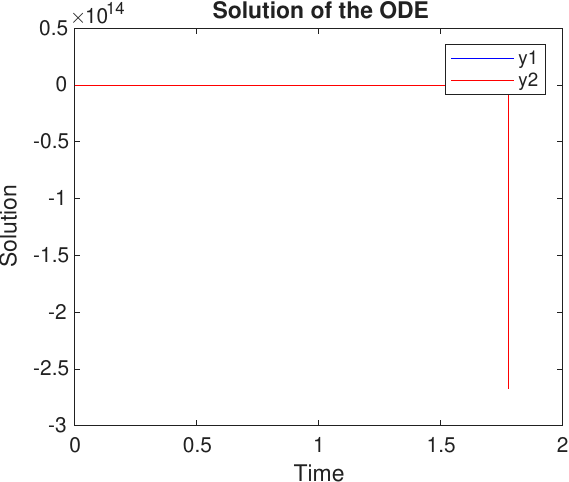}
        \caption{{\sc{Matlab}} output without fine-tuned model: Qwen-2.5 showing a solution which blows-up.}
        \label{fig:first}
    \end{minipage}
    \hfill
    \begin{minipage}[t]{0.48\textwidth}
        \centering
        \includegraphics[width=0.95\textwidth]{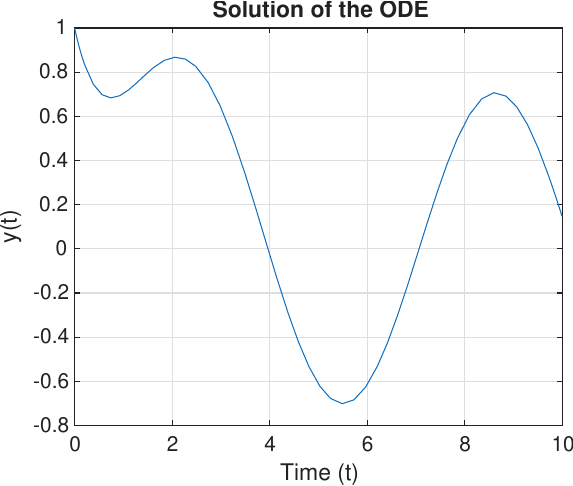}
        \caption{{\sc{Matlab}} output with fine-tuned model: Qwen-2.5-Coder showing a bounded solution.}
        \label{fig:second}
    \end{minipage}
\end{figure}

\end{document}